\crefname{enumi}{}{}
\crefname{equation}{}{}
\def\@tocline#1#2#3#4#5#6#7{\relax
  \ifnum #1>\c@tocdepth 
  \else
    \par \addpenalty\@secpenalty\addvspace{#2}%
    \begingroup \hyphenpenalty\@M
    \@ifempty{#4}{%
      \@tempdima\csname r@tocindent\number#1\endcsname\relax
    }{%
      \@tempdima#4\relax
    }%
    \parindent\z@ \leftskip#3\relax \advance\leftskip\@tempdima\relax
    \rightskip\@pnumwidth plus4em \parfillskip-\@pnumwidth
    #5\leavevmode\hskip-\@tempdima
      \ifcase #1
       \or\or \hskip 1em \or \hskip 2em \else \hskip 3em \fi%
      #6\nobreak\relax
    \dotfill\hbox to\@pnumwidth{\@tocpagenum{#7}}\par
    \nobreak
    \endgroup
  \fi}
\newtheorem{theorem}{Theorem}[section]
\newtheorem{proposition}[theorem]{Proposition}
\newtheorem{lemma}[theorem]{Lemma}
\theoremstyle{definition}
\newtheorem{definition}[theorem]{Definition}
\newtheorem{remark}[theorem]{Remark}
\numberwithin{equation}{section}
\def \R {{\mathbb {R}}}
\def\supp{\operatorname{supp}}
\def\diam{\operatorname{diam}}
\def\grad{\nabla}
\renewcommand{\tilde}{\widetilde}
\newcommand{\dx}{\, {\rm d} x}
\newcommand{\dt}{\, {\rm d} t}
\newcommand{\de}{\, {\rm d}}
\renewcommand{\fint}{\strokedint}
\begin{document}
	

\title[Boundary Behavior]{Boundary behavior of solutions to the\\ parabolic $p$-Laplace equation}

\author[Avelin]{Benny Avelin}
\address{
	Benny Avelin,
	Department of Mathematics and Systems Analysis,
	Aalto University School of Science,
	FI-00076 Aalto,
	Finland
}
\address{
	Benny Avelin,
	Department of Mathematics,
	Uppsala University,
	S-751 06 Uppsala,
	Sweden
}
\email{\color{blue} benny.avelin@math.uu.se}

\author[Kuusi]{Tuomo Kuusi}
\address{
	Tuomo Kuusi,
	Department of Mathematics and Systems Analysis,
	Aalto University School of Science,
	FI-00076 Aalto,
	Finland
}
\email{\color{blue} tuomo.kuusi@aalto.fi}

\author[Nystr\"om]{Kaj Nystr\"om}
\address{
	Kaj Nystr\"{o}m,
	Department of Mathematics,
	Uppsala University,
	S-751 06 Uppsala,
	Sweden
}
\email{\color{blue} kaj.nystrom@math.uu.se}

\date{\today}

\subjclass[2010]{35K20, 35K65, 35B65}

\keywords{
	$p$-parabolic equation,
	degenerate,
	intrinsic geometry,
	waiting time phenomenon,
	intrinsic Harnack chains,
	boundary Harnack principle,
	$p$-stability
}

\begin{abstract}
	We establish boundary estimates for non-negative solutions to the $p$-parabolic equation in the degenerate range $p>2$. Our main results include new parabolic intrinsic Harnack chains in cylindrical NTA-domains together with sharp boundary decay estimates. If the underlying domain is $C^{1,1}$-regular, we establish a relatively complete theory of the boundary behavior, including boundary Harnack principles and H\"older continuity of the ratios of two solutions, as well as fine properties of associated boundary measures. There is an intrinsic waiting time phenomena present which plays a fundamental role throughout the paper. In particular, conditions on these waiting times rule out well-known examples of explicit solutions violating the boundary Harnack principle.
\end{abstract}
\maketitle


\section{Introduction and results}

This paper is devoted to a study of the boundary behavior of non-negative solutions to the $p$-parabolic equation, in the degenerate range $p>2$. We restrict the analysis to space-time cylinders $\Omega_T=\Omega\times (0,T)$, $T>0$, where $\Omega\subset\R^n$ is a bounded domain, i.e., an open and connected set. Given $p$, $1< p<\infty$, fixed, recall that the $p$-parabolic equation is the equation
\begin{equation} \label{basic eq}
	 \partial_t u - \Delta_p u := \partial_tu-\nabla\cdot (|\nabla u|^{p-2}\nabla u) = 0\,.
\end{equation}

In the special case $p=2$  the $p$-parabolic equation coincides with the heat equation, and in this case we refer to Kemper~\cite{K}, Salsa~\cite{S}, and also~\cite{FGSit,FGS,FS,FSY,G,N}, concerning the boundary behavior of non-negative solutions. Key results established in these works, in the context of Lipschitz-cylinders $\Omega_T$, include Carleson type estimates, the relation between the associate parabolic measure and Green function, the backward in time Harnack inequality, boundary Harnack principles (local and global) and  H\"older continuity up to the boundary of quotients of non-negative solutions vanishing on the lateral boundary.

On the contrary for $p\neq 2$, $1<p<\infty$, much less is known concerning these problems and we refer the reader to~\cite{A,AGS, KMN} for accounts of the current literature. For a relatively complete picture in the case of non-linear parabolic operators with linear growth we refer to \cite{NPS}. However, it is also important to mention that there is an interesting and related recent literature devoted to the asymptotic and pointwise behavior of solutions to non-linear diffusion equations on bounded domains, see \cite{SV}, and also \cite{BoVa} for the porous medium type equations,  and the references therein.

Considering non-negative solutions to the $p$-parabolic equation, for $p$ in the degenerate range $p>2$, it is a priori not clear to what extent and in what sense the above mentioned results can remain to hold. Indeed, on the one hand we have to account for the lack of homogeneity of the $p$-parabolic equation, and on the other hand we have to account for the fact that in the degenerate regime the phenomenon of finite speed propagation is present. As a matter of fact, simple examples show that in this case there are, compared to the case $p=2$, much more delicate waiting time phenomena to take into account.

To discuss the aspects of the waiting time phenomena further, we here first briefly describe some by now classical results in the case $p=2$, see~\cite{FGS,S}. Assume that $\Omega$ is, say, a Lipschitz domain, that $x_0 \in \partial \Omega$ and let $A_\pm := (a_r(x_0),t_0 \pm r^2)$, where $a_r(x_0)$ is an interior point of $\Omega$ with distance to the boundary comparable to $r$.
Assume also that $u$ and $v$ are non-negative caloric functions in $\Omega_T$, i.e., functions satisfying~\cref{basic eq} with $p=2$ in $\Omega_T$, vanishing continuously on $(\partial \Omega \cap B_r(x_0)) \times (t_0-r^2, t_0 + r^2)$, where $ B_r(x_0)\subset\mathbb R^n$ is the standard Euclidean ball of radius $r$ and centered at $x_0\in\mathbb R^n$. Then
\begin{equation}
	\label{st1} c^{-1}\frac{u(A_-)}{v(A_+)}\leq \frac{u(x,t)}{v(x,t)}\leq c\frac{u(A_+)}{v(A_-)}\,,
\end{equation}
 for a universal constant $c$, whenever $(x,t) \in (\Omega \cap B_{r/2}(x_0)) \times (t_0-(r/2)^2, t_0 + (r/2)^2)$. However, in general an estimate like~\cref{st1} dramatically fails in the case $p \neq 2$. To see this, recall the following two classical solutions (see e.g.~\cite{AGS}) in the case when $\Omega := \R^{n-1} \times \{x_n : x_n>0\}$:
\begin{equation}
	\label{st1+} u(x,t)= c_p (T-t)^{-1/(p-2)}x_n^{p/(p-2)} \,, \qquad v(x,t) = x_n\,.
\end{equation}

In view of the examples in \cref{st1+} it is not clear under what conditions the boundary Harnack principle in~\cref{st1} could hold. Let us make a few observations. When defining $u$ as in~\cref{st1+}, we see that the larger we take $T$, the longer the solution $u$ exists and the smaller its pointwise values become at a fixed time $t < T$.
If we wish to show an estimate as in~\cref{st1}, we need to be able to rule out examples like $u$  in \cref{st1+} (see also some other examples in \cite{AGS}). We do this by simply requiring, for  $(x,t) \in \Omega_T$ fixed, that
\begin{equation} \label{st1++}
	T-t > C_0 u(x,t)^{2-p} d(x,\partial \Omega)^p\,,
\end{equation}
for a large enough constant $C_0$. It is easily seen that the solution $u$ in~\cref{st1+} does not satisfy~\cref{st1++} at any point $(x,t) \in \Omega_T$ if we require $C_0 \geq c_p^{p-2}$.

In this context, and for the $p$-parabolic equation, it is here natural to make a link to the by now  classical method of {\em intrinsic scaling} due to DiBenedetto, see~\cite{DB}. The intrinsic scalings define the canonical geometry in which weak solutions to the $p$-parabolic equation become homogenized in a sense to be made precise. Indeed, in this geometry we consider, instead of the standard parabolic cylinders, intrinsically time-scaled cylinders of the type
\begin{align*}
	Q_r^{\lambda,+}(x,t) &:= B_r(x) \times (t, t +  \lambda^{2-p} r^p)\,, \\
	Q_r^{\lambda,-}(x,t) &:= B_r(x) \times (t - \lambda^{2-p} r^p, t)\,, \qquad \lambda := u(x,t)\,.
\end{align*}
These kinds of intrinsic cylinders appear naturally in the context of Harnack inequalities, oscillation reduction estimates, and decay estimates, and defines the correct geometry in our setting.

The main goal of this paper is to study to what extent the theory developed in \cite{FGS,S} generalizes to the case $p>2$, under suitable intrinsic conditions. We have already seen that it rules out the pathological examples like $u$ in~\cref{st1+}. In fact, we prove that~\cref{st1++} is a sufficient condition for developing a rather general theory concerning the boundary behavior of non-negative solutions to the $p$-parabolic equation. For instance,  ~\cref{st1++} allows us to prove a counterpart of~\cref{st1} valid for $2<p<\infty$, see~\Cref{theo3+} below.


\subsection{Summary of results}

We will now give an informal summary of our results. The precise statements can be found in the bulk of the paper.

\subsubsection*{Harnack chains}
 Fundamental tools in the study of the boundary behavior of non-negative solutions are the Harnack inequality and Harnack chains. Harnack chains allow one to relate the value of non-negative solutions at  different space-time points in the domain. For  $p=2$ the Harnack inequality is homogeneous and, roughly, to control the values of the solution in a ball of size $r$ requires a waiting time comparable to $r^2$. For  $p > 2$ we have to use an intrinsic version of the Harnack inequality \cite{DB,DGV}. In particular the intrinsic Harnack inequality states, see \Cref{Har}, that if we have a non-negative solution $u$ to the $p$-parabolic equation in $\Omega_T$, with $(x,t) \in \Omega_T$, and $Q_{4r}^{u(x,t)/c_h ,-}(x,t) \subset \Omega_T$, then
\begin{equation*}
	u(x,t) \leq C_h \inf_{y \in B_r(x)} u\left (y, t + \left [\frac{c_h}{u(x,t)} \right ]^{p-2} r^p \right)
\end{equation*}
provided that $t + \left [\frac{c_h}{u(x,t)} \right ]^{p-2} r^p < T$. The intrinsic waiting time required in this  Harnack inequality is consistent with the condition stated in \Cref{st1++}. In~\Cref{s.Harnack} we develop a sequence of Harnack chain estimates and the goal of that section is twofold. First, we want to establish estimates applicable in cylindrical NTA-domains, see~\Cref{NTA}. Second, we want to establish a $p$-stable counterpart of the sharp Harnack chain estimate proved by Salsa~\cite[Theorem C]{S}, which in the case $p=2$ reads as
\begin{equation} \label{Salsa Harnack}
		u(y,s) \leq u(x,t) \exp\left [ C \left ( \frac{|x-y|^2}{t-s} + \frac{t-s}{k} + 1\right )\right ]\,,
\end{equation}
where $k = \min\{1,s,d(x,\partial \Omega)^2, d(y,\partial \Omega)^2\}$, $s < t$, and $(x,t),(y,s) \in \Omega_T$. To do this we develop Harnack chains based on the weak Harnack inequality proved in~\cite{Ku}, see~\Cref{TuomoWH} below, valid for supersolutions to the $p$-parabolic equation. As truncations of our solutions are supersolutions to \Cref{basic eq}, we are able to control the waiting times more precisely by adjusting the levels at which the solutions are truncated. This is in sharp contrast to the Harnack chains developed in \cite{A,AGS} for which there is very little control over the waiting time. Our approach to Harnack chains has at least three advantages. First, it allows us to construct Harnack chains starting from the measure $u(x,t_0)\dx$ at the initial time $t_0$. Second, it allows us to develop a flexible Carleson estimate, see \Cref{s.Carleson},  generalizing the one in \cite{AGS} and which, in addition, remains valid in the context of time-independent NTA-cylinders. We note that although the Carleson estimate proved in \cite{A} is valid in the setting of  time-independent NTA-cylinders, a difference compared to the results in this paper is that  the Carleson estimate proved in \cite{A} is not $p$-stable as $p\to 2$. Third, we develop a version of~\cref{Salsa Harnack}  which is $p$-stable in the sense that we recover~\cref{Salsa Harnack} as $p \to 2$. To establish this Gaussian type behavior for $p>2$ is technically rather involved.

\subsubsection*{Estimates of associated boundary measures}
In the study of the boundary behavior of quasi-linear equations of $p$-Laplace type, certain Riesz measures supported on the boundary and associated to non-negative solutions vanishing on a portion of the boundary are important, see \cite{LN1,LN2}. These measures are non-linear generalizations of the harmonic measure relevant in the study of the Laplace equation and the Green function. Corresponding  measures can also be associated to solutions to the $p$-parabolic equation. Indeed, let $u$ be a non-negative solution in $\Omega_T$, assume that $u$ is continuous on the closure of $\Omega_T$, 
and that $u$ vanishes on $\partial_p \Omega_T \cap Q$ with some open set $Q$. Extending $u$ to be zero in $Q \setminus \Omega_T$, 
it is straightforward to see that $u$ is a continuous weak subsolution to~\cref{basic eq} in $Q$. Using this, one can conclude that there exists a unique locally finite positive Borel measure $ \mu$, supported on $S_T \cap  Q$, such that
\begin{align}
	\label{1.1+} & \int_Q u \partial_t\phi\dx \dt -\int_Q|\nabla u|^{p-2}\nabla u\cdot\nabla \phi \dx \dt = \int_Q \phi \de \mu
\end{align}
whenever $ \phi \in C_0^\infty(Q)$. In \Cref{s.measure} we establish, in cylindrical NTA-domains, both upper and lower bounds for the measure $\mu$ in terms of $u$. If $\Omega$ is smooth, then  $\de \mu=|\grad u|^{p-1}\de H^{n-1} \dt$.  Based on this the lower bound established on $\mu$ can be interpreted as a non-degeneracy estimate, close to the boundary, of the solution. Our proof of the lower bound for the measure $\mu$ is a modification of the elliptic proof, see for example \cite{ANsub,KZ}. However, our proof is genuinely non-linear, it applies to much more general operators of $p$-parabolic type, and the result seems to be new already in the case $p=2$.

\subsubsection*{A ``complete theory'' in $C^{1,1}$-domains} 
We establish a  ``complete theory'' concerning the boundary behavior of non-negative solutions in $\Omega_T$ in the case when $\Omega$  is a  $C^{1,1}$-domain. As a comprehensive literature is missing, we in~\Cref{s.global,s.local} develop both a local as well as a global theory of boundary behavior in $C^{1,1}$-cylinders. In the global setting we are able, as in~\cite{FGS} with corresponding estimates in the case $p=2$, to give a rather complete picture. For nonnegative solutions vanishing on the lateral boundary, our results include a global boundary Harnack principle and H\"older continuity of ratios. On the other hand, in the local setting we prove a new intrinsic local boundary Harnack principle.  In the context of $C^{1,1}$-cylinders we are also able to show that the boundary measure in~\cref{1.1+} is mutually absolutely continuous with respect to the surface measure in a suitably chosen intrinsic geometry. The results in~\Cref{s.global,s.local} are obtained by combining Harnack chains and Carleson estimates with explicit barrier constructions from~\Cref{s.barriers} and decay estimates from~\Cref{s.decay}.

\vspace{0.5cm}

\noindent {\bf Acknowledgment} The first author was supported by the Swedish Research Council, dnr: 637-2014-6822. The second author was supported by the Academy of Finland \#258000.


\section{Notation and preliminaries}
Points in $ \R^{n+1} $ are denoted by $ x = ( x_1, \dots, x_n,t)$. Given a set $E\subset \R^n$, let $ \bar E, \partial E$, $\mbox{diam } E$, $E^c$, $E^\circ $, denote the closure, boundary, diameter, complement and interior of $E$, respectively. Let $ \cdot $ denote the standard inner product on $ \R^{n} $, let $ | x | = (x \cdot x )^{1/2}$ be the Euclidean norm of $ x, $ and let $\dx $ be Lebesgue $n$-measure on $ \R^{n}. $ Given $ x \in \R^{n}$ and $r >0$, let $ B_{r} (x) = \{ y \in \R^{n} : | x - y | < r \}$. Given $ E, F \subset \R^{n}, $ let $ d ( E, F ) $ be the Euclidean distance from $ E $ to $ F $. In case $ E = \{y\}, $ we write $ d ( y, F )$. For simplicity, we define $\sup$ to be the essential supremum and $\inf$ to be the essential infimum. If $ O \subset \R^{n} $ is open and $ 1 \leq q \leq \infty, $ then by $ W^{1 ,q} ( O )$ we denote the space of equivalence classes of functions $ f $ with distributional gradient $ \nabla f = ( f_{x_1}, \dots, f_{x_n} ), $ both of which are $ q $-th power integrable on $ O. $ Let \,
\[ \| f \|_{ W^{1,q} (O)} = \| f \|_{ L^q (O)} + \| \, | \nabla f | \, \|_{ L^q ( O )} \, \]
be the norm in $ W^{1,q} ( O ) $ where $ \| \cdot \|_{L^q ( O )} $ denotes the usual Lebesgue $q$-norm in $O$. $ C^\infty_0 (O )$ is the set of infinitely differentiable functions with compact support in $ O$ and we let $ W^{1 ,q}_0 ( O )$ denote the closure of $ C^\infty_0 (O )$ in the norm $\| \cdot\|_{ W^{1,q} (O)}$. $ W^{1,q}_{\rm loc} ( O ) $ is defined in the standard way. By $ \nabla \cdot $ we denote the divergence operator. Given $t_1<t_2$ we denote by $L^q(t_1,t_2,W^{1,q} ( O ))$ the space of functions such that for almost every $t$, $t_1\leq t\leq t_2$, the function $x\to u(x,t)$ belongs to $W^{1,q} ( O )$ and
\begin{equation*}
	\| u \|_{ L^q(t_1,t_2,W^{1,q} ( O ))}:=\biggl (\int\limits_{t_1}^{t_2}\int\limits_O\biggl (|u(x,t)|^q+|\nabla u(x,t)|^q\biggr )\dx\dt\biggr )^{1/q} <\infty\,.
\end{equation*}
The spaces $L^q(t_1,t_2,W^{1,q}_0 ( O ))$ and $L^q_{\rm loc}(t_1,t_2,W^{1,q}_{\rm loc} ( O ))$ are defined analogously. Finally, for $I \subset \R$, we denote $C(I;L^{q} ( O ))$ as the space of functions such that $t\to \| u(t,\cdot) \|_{ L^{q} (O)}$ is continuous whenever $t \in I$. $C_{\rm loc}(I;L^{q}_{\rm loc} ( O ))$ is defined analogously.


\subsection{Weak solutions} Let $ \Omega \subset \R^n $ be a bounded domain, i.e., a connected open set. For $t_1<t_2$, we let $\Omega_{t_1,t_2}:= \Omega \times (t_1,t_2)$. Given $p$, $1<p<\infty$, we say that $ u $ is a weak solution to
\begin{equation} \label{Hu}
	\partial_t u - \Delta_p u = 0
\end{equation}
in $ \Omega_{t_1,t_2}$ if $ u \in L_{\rm loc}^p(t_1,t_2,W_{\rm loc}^{1,p} ( \Omega))$ and
\begin{equation}
	\label{1.1tr} \int_{\Omega_{t_1,t_2} } \left(- u \partial_t \phi  + |\nabla u|^{p-2}\nabla u\cdot\nabla \phi \right) \dx\dt = 0
\end{equation}
whenever $ \phi \in C_0^\infty(\Omega_{t_1,t_2})$. If $ u $ is a weak solution to~\Cref{Hu} in the above sense, then we will often refer to $u$ as being $p$-parabolic in $\Omega_{t_1,t_2}$. For $p \in (2,\infty)$ we have by the parabolic regularity theory, see \cite{DB}, that any $p$-parabolic function $u$ has a locally H{\"o}lder continuous representative. In particular, in the following we will assume that $p \in (2,\infty)$ and any solution $u$ is continuous. If \Cref{1.1tr} holds with $=$ replaced by $\geq$ ($\leq$) for all $ \phi \in C_0^\infty(\Omega_{t_1,t_2})$, $\phi \geq 0$, then we will refer to $u$ as a weak supersolution (subsolution).


\subsection{Geometry} We here state the geometrical notions used throughout the paper.
\begin{definition}
	\label{NTA} A bounded domain $\Omega$ is called non-tangentially accessible (NTA) if there exist $M \geq 2$ and $r_0$ such that the following are fulfilled:
	\begin{enumerate}[label=\bf{(\arabic*)}]
		\item \label{NTA1} \emph{corkscrew condition:} for any $ w\in \partial\Omega, 0<r<r_0,$ there exists a point $a_r(w) \in \Omega $ such that
		\begin{equation}
			\nonumber M^{-1}r<|a_r(w)-w|<r, \quad d(a_r(w), \partial\Omega)>M^{-1}r \,,
		\end{equation}
		\item \label{NTA2} $\R^n \setminus \Omega$ satisfies \cref{NTA1},
		\item \label{NTA3} \emph{uniform condition:} if $ w \in \partial \Omega, 0 < r < r_0, $ and $ w_1, w_2 \in B_r(w) \cap \Omega, $ then there exists a rectifiable curve $ \gamma: [0, 1] \to \Omega $ with $ \gamma ( 0 ) = w_1,\, \gamma ( 1 ) = w_2, $ such that
		\begin{enumerate}
			\item $H^1 ( \gamma ) \, \leq \, M \, | w_1 - w_2 |,$
			\item $\min\{H^1(\gamma([0,t])), \, H^1(\gamma([t,1]))\, \}\, \leq \, M \, d ( \gamma(t), \partial \Omega)$, for all $t \in [0,1]$.
		\end{enumerate}
	\end{enumerate}
\end{definition}

We choose this definition as it very useful when we explicitly construct the parabolic Harnack chains in \Cref{s.Harnack}, see specifically \Cref{NTA FChainMeas}.
The values $ M $ and $r_0$ will be called the NTA-constants of $ \Omega$. For more on the notion of NTA-domains we refer to \cite{JK}.

\begin{definition}
	\label{defBall}
	Let $\Omega \subset \R^n$ be a bounded domain. We say that $\Omega$ satisfies the ball condition with radius $r_0 > 0$ if for each point $y \in \partial \Omega$ there exists points $x^+ \in \Omega$ and $x^- \in \Omega^c$ such that $B_{r_0}(x^+) \subset \Omega$, $B_{r_0}(x^-) \subset \Omega^c$, $\partial B_{r_0}(x^+) \cap \partial \Omega =\{y\}= \partial B_{r_0}(x^-) \cap \partial \Omega$, and such that the points $x^+(y),x^-(y),y$ are collinear for each $y \in \partial \Omega$.
\end{definition}
\begin{remark} \label{ballNTAcork}
	It is easy to see that a domain satisfying  the ball condition with radius $r_0 > 0$ is an NTA-domain with a constant $M$ and $r_0$. In particular, we may canonically choose
	$$
		a_r(x_0) := x_0 + \frac r2 \frac{x^+-x_0}{|x^+-x_0|}\,,
	$$
	since the direction given by $\frac{x^+-x_0}{|x^+-x_0|}$ is unique. The exterior corkscrew point is defined analogously.
\end{remark}
\begin{remark}
	Let $\Omega \subset \R^n$ be a bounded domain. Then $\Omega$ is a $C^{1,1}$ domain if and only if it satisfies the ball condition. For a proof of this fact, see for example \cite[Lemma 2.2]{AKSZ}.
\end{remark}


\subsection{The continuous Dirichlet problem} Assuming that $\Omega$ is a bounded NTA-domain one can prove, see \cite{BBGP} and \cite{KL}, that all points on the parabolic boundary
\begin{equation*}
	\partial_p\Omega_T = S_T \cup (\bar \Omega \times \{0\})\,, \qquad S_T = \partial \Omega \times [0,T]\,,
\end{equation*}
of the cylinder $\Omega_T$ are regular for the Dirichlet problem for the equation \Cref{Hu}. In particular, for any $f\in C( \partial_p\Omega_T)$, there exists a unique Perron-solution $u=u_f^{\Omega_T}\in C(\overline \Omega_T)$ to the Dirichlet problem $\partial_t u - \Delta_p u = 0$ in $\Omega_T$ and $u =f$ on $\partial_p \Omega_T$.


\section{Harnack chains} \label{s.Harnack}

 In this section we prove a sequence of results concerning intrinsic Harnack chains. Forward-in-time chains describe the diffusion with an appropriate waiting time. On the other hand, backward-in-time chains says that if the solution has existed for long enough time, the future values will control the values from the past as well. Throughout the section we let $\Omega \subset \R^n$ be a bounded domain and given $T>0$ we let $\Omega_T=\Omega\times (0,T)$. 


\subsection{Local Harnack inequalities}
We here collect two estimates from the literature.  The following theorem can be found in~\cite{DB,DGV1,DGV}.
\begin{theorem}
	\label{Har} Let $u$ be a non-negative $p$-parabolic function in $\Omega_T$, let $(x_0,t_0)\in\Omega_T$ and assume that $u(x_0,t_0)>0$. There exist positive constants $c_h$ and $C_h$, depending only on $p,n$, such that if $B_{4 r }(x_0)\subset\Omega$ and
	\begin{align*}
		\left (t_0 - \theta (4 r )^p, t_0 + \theta(4 r )^p \right ) \subset (0,T]\,,
	\end{align*}
	where $\theta = \left (\frac{c_h}{u(x_0,t_0)}\right )^{p-2}$,  then
	\begin{equation*}
		u(x_0,t_0)\leq C_h \inf_{B_{ r }(x_0)} u(\cdot,t_0+\theta  r ^p)\,.
	\end{equation*}
	The constants $c_h$ and $C_h$ are stable as $p\to2$ and deteriorate as $p\to\infty$ in the sense that $c_h(p),\ C_h(p)\to\infty$ as $p\to\infty$.
\end{theorem}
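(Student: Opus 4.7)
The plan is to follow DiBenedetto's approach, which is by now standard for degenerate parabolic equations. The first step is to normalize via intrinsic rescaling: set $\lambda = u(x_0,t_0)$ and define
\[
	\tilde u(y,s) := \lambda^{-1}\, u\bigl(x_0 + ry,\ t_0 + \lambda^{2-p} r^p s\bigr)\,.
\]
A direct calculation shows that $\tilde u$ is again a non-negative $p$-parabolic function on $B_4(0) \times (-s_0, s_0)$ with $\tilde u(0,0) = 1$, where $s_0 = 4^p c_h^{p-2}$. The theorem thus reduces to the scale-free statement: there are universal constants $c_h, C_h$ (depending only on $n,p$) so that any admissible $\tilde u$ satisfies $\inf_{B_1(0)} \tilde u(\cdot, c_h^{p-2}) \ge 1/C_h$.

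For this normalized statement I would proceed in three steps. First, I would run a sup-polynomial/clustering argument: considering $M_\rho := (1-\rho)^{-\beta}\sup_{Q^{1,-}_\rho(0,0)} \tilde u$ for $\rho \in (0,1)$ with a large parameter $\beta = \beta(n,p)$, and selecting the maximizing radius, one obtains a base point $(y^\ast,s^\ast)$ near $(0,0)$ at which $\tilde u(y^\ast,s^\ast) =: M^\ast \ge 1$, together with a controlled $L^\infty$ bound on $\tilde u$ in an intrinsic cylinder around it. Second, combining this upper bound with the intrinsic Caccioppoli inequality and a forward-in-time De Giorgi/measure argument, I would deduce that $\tilde u \ge \eta M^\ast$ on a set of relative measure at least $\beta_0$ in an intrinsic cylinder just after $(y^\ast, s^\ast)$, for universal $\eta, \beta_0 > 0$. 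Third, applying the measure-to-pointwise \emph{expansion of positivity} lemma a finite, universal number of times --- each iterate propagating a positivity lower bound forward to an intrinsic cylinder of comparable size with a universal contraction factor --- I would transport the positivity to $B_1(0) \times \{c_h^{p-2}\}$, choosing $c_h$ large enough that the entire chain of cylinders stays inside $B_4(0)\times(-s_0, s_0)$.

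The main obstacle is the lack of scaling invariance of the $p$-parabolic equation when $p \ne 2$: every rescaling couples the space scale, the time scale, and the value of the solution through $\theta = \lambda^{2-p}$, so every energy estimate, De Giorgi iteration, and chaining step must be performed in the intrinsic geometry, and the intermediate constants must be tracked carefully so that $c_h, C_h$ remain bounded as $p\to 2^+$ (the stated stability) and blow up only as $p\to\infty$. Equally delicate is the geometric bookkeeping that keeps every intrinsic cylinder of the forward chain inside the admissible domain; this is precisely what forces the hypotheses $B_{4r}(x_0)\subset\Omega$ and $(t_0-\theta(4r)^p,\,t_0+\theta(4r)^p)\subset(0,T]$, rather than merely $B_r(x_0)\subset\Omega$ and $t_0+\theta r^p<T$.
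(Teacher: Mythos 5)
The paper does not prove this theorem; it is stated as a known result with a citation to DiBenedetto's book and the Harnack papers of DiBenedetto--Gianazza--Vespri. Your sketch correctly reproduces, at the level of detail given, the architecture of the proof in those references: intrinsic renormalization reducing the statement to a scale-free one at the unit scale, a hot-spot/clustering selection to locate a base point with a controlled intrinsic $L^\infty$ bound nearby, a measure-to-pointwise De Giorgi step producing a set of positive relative measure where $\tilde u$ is bounded below, and finitely many iterations of expansion of positivity to carry the lower bound forward to $B_1(0)\times\{c_h^{p-2}\}$, with $c_h$ chosen large enough that the whole chain sits inside $B_4(0)\times(-s_0,s_0)$.

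Two remarks. First, the weight in your selection functional has the wrong sign: you need it to vanish as $\rho\to 1^-$, i.e.\ $M_\rho = (1-\rho)^{\beta}\sup_{Q^{1,-}_\rho(0,0)}\tilde u$ rather than $(1-\rho)^{-\beta}\sup$; with the negative power $M_\rho\to\infty$ as $\rho\to 1^-$ (since $\tilde u$ is locally bounded), so no maximizing radius exists, whereas with the positive power $M_0 = \tilde u(0,0) = 1$ and $M_\rho\to 0$, giving a maximizer with $M_{\rho^\ast}\ge 1$. Second, what you have written is an outline, not a proof: the expansion-of-positivity lemma with its logarithmic waiting time, the De Giorgi iteration constants, and the verification that all parameters remain bounded as $p\to 2^+$ (the stated $p$-stability) constitute the bulk of the cited papers. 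In the present paper these ingredients are deliberately black-boxed, and the genuine work begins with \Cref{TuomoWH} and the chaining lemmas built on it.
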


The next theorem is instead valid for non-negative weak supersolutions. For the proof we refer to~\cite{Ku}.

\begin{theorem}
	\label{TuomoWH} Let $u$ be a non-negative weak supersolution in $B_{4 r }(x_0) \times (0,T)$. There exist constants $C_i \equiv C_i(p,n)$, $i=1,2,$ such that
	\begin{equation*}
		\fint_{B_{ r }(x_0)} u(x,t_1) \dx \leq \frac 1 2\left ( \frac{C_1  r ^p}{T - t_1} \right )^{\frac{1}{p-2}} + C_2 \inf_{Q} u
	\end{equation*}
	for almost every $0 < t_1 < T$, where $Q := B_{2  r }(x_0) \times (t_1+T_1/2,t_1+T_1)$, and
	\begin{equation*}
		T_1 = \min \left \{ T-t_1, C_1  r ^p \left ( \fint_{B_{ r }(x_0)} u(x,t_1) \dx \right )^{2-p} \right \}.
	\end{equation*}
	In particular, if $T_1 < T-t_1$, then
	\begin{equation*}
		\fint_{B_{ r }(x_0)} u(x,t_1)\dx \leq 2 C_2 \inf_{Q} u\,.
	\end{equation*}
\end{theorem}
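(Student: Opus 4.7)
The plan is to follow the standard measure-theoretic strategy for $p$-parabolic weak Harnack inequalities, using the intrinsic time scale to close a De Giorgi iteration. Set $\theta := \fint_{B_r(x_0)} u(x,t_1)\dx$. If $\theta^{2-p} \geq C_1 r^p/(T-t_1)$ (equivalently $T_1 = T - t_1$), then $\theta \leq (C_1 r^p/(T-t_1))^{1/(p-2)}$ and the first term on the right-hand side already majorizes the left-hand side. So the interesting regime is $T_1 = C_1 r^p \theta^{2-p} < T - t_1$, which means the cylinder $B_{2r}(x_0) \times (t_1, t_1 + T_1)$ sits inside the domain and is sized intrinsically to $\theta$.

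First I would convert the $L^1$ information at the initial slice into a measure-theoretic lower bound propagated through the intrinsic slab. By a standard Markov-type argument, $|\{x \in B_r(x_0) : u(x,t_1) \geq \theta/2\}| \geq \tfrac{1}{2}|B_r(x_0)|$ times a dimensional constant. Using the Caccioppoli inequality applied to $(u-k)_-$ (which is a weak subsolution whenever $u$ is a non-negative supersolution), I would show that for $k = c_0 \theta$ with $c_0 = c_0(n,p)$ sufficiently small, there exists $\alpha = \alpha(n,p) \in (0,1)$ such that
\begin{equation*}
  \bigl|\{x \in B_{2r}(x_0) : u(x,t) \geq c_0 \theta\}\bigr| \geq \alpha\, |B_{2r}(x_0)| \quad \text{for all } t \in [t_1, t_1 + T_1].
\end{equation*}
The key here is that the time $T_1 \sim r^p \theta^{2-p}$ is exactly what the energy estimate needs: the gradient term costs $T_1/(r^p \theta^{2-p})$, which is $O(1)$ by construction, and this is what prevents the level set $\{u \geq c_0 \theta\}$ from being swept away by the diffusion.

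The second and central step is DiBenedetto's expansion of positivity: upgrade this measure-theoretic bound to the pointwise bound $u \geq c_1 \theta$ on $B_{2r}(x_0) \times (t_1 + T_1/2, t_1 + T_1)$. I would run a De Giorgi iteration on the truncations $(u - 2^{-j} c_0 \theta)_-$, each performed on the intrinsically scaled cylinder adapted to its level, using the measure lower bound from Step 1 as the base case of the iteration and the Caccioppoli inequality plus parabolic Sobolev embedding to decrease the superlevel-set measure geometrically. After finitely many steps the set $\{u < c_1 \theta\}$ on $Q$ becomes empty.

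The main obstacle is the intrinsic feedback in Step 2: the scaling of the cylinder, the truncation level, and the time horizon all depend on $\theta$, and in the degenerate regime $p > 2$ each iteration step uses a time interval scaled by the current level to the power $2-p$, which shrinks as the level decreases. Closing the iteration therefore requires carefully nesting the cylinders so that all subcylinders stay inside $B_{2r}(x_0)\times(t_1, t_1 + T_1)$, and monitoring how the constants behave uniformly, especially if one wants stability as $p \to 2$. Once this is carried out, combining the pointwise estimate with $\inf_Q u \geq c_1 \theta$ and rearranging gives the stated inequality with $C_2 = c_1^{-1}$.
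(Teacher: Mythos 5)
Your overall strategy --- propagating an $L^1$ lower bound at the initial slice into a measure-theoretic estimate inside the intrinsically scaled cylinder, and then upgrading to a pointwise bound by DiBenedetto's expansion of positivity --- is the right shape, and your observation about why the choice $T_1 \sim r^p\theta^{2-p}$ makes the Caccioppoli term $O(1)$ is exactly the crucial scaling. However, the very first step is wrong, and the error is load-bearing.

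You assert that ``by a standard Markov-type argument'' $\fint_{B_r(x_0)} u(\cdot,t_1)\,\dx = \theta$ implies $|\{x\in B_r(x_0): u(x,t_1)\geq\theta/2\}| \geq c(n)|B_r(x_0)|$. This is false for nonnegative functions with only an $L^1$ bound: take $u(\cdot,t_1)=N\theta$ on a set of measure $|B_r|/N$ and $u(\cdot,t_1)=0$ elsewhere. Then $\fint_{B_r} u = \theta$ while $|\{u\geq\theta/2\}| = |B_r|/N \to 0$. Chebyshev/Markov only gives the \emph{upper} bound $|\{u\geq\lambda\}| \leq (\theta/\lambda)|B_r|$; it never produces a lower bound on the measure of a superlevel set without an accompanying $L^\infty$ (or higher-integrability) control, which is not available for supersolutions. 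Since this measure estimate is the base case for the Caccioppoli propagation and the subsequent De Giorgi iteration, the whole argument has no starting point.

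The actual route in the cited reference~\cite{Ku} has to work harder precisely here: one first uses the supersolution structure (testing with a spatial cutoff and Caccioppoli for $(u-k)_-$) to propagate the \emph{integral} information forward through the intrinsic slab, and only then extracts a level $k \gtrsim \theta$ together with a time $t^\filledstar\in(t_1,t_1+T_1)$ at which the superlevel set has positive density --- the level and the time come out of the argument and are not fixed a priori. It is this nontrivial ``$L^1$-to-measure'' lemma, not Markov's inequality, that replaces your first step. Your Step 2 (expansion of positivity via De Giorgi iteration on nested intrinsic cylinders) and your remark about tracking the $p$-dependence of the constants are fine, but they cannot get off the ground without repairing Step 1.
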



\subsection{Forward Harnack chains} \label{ss.fchain}

We begin by describing a simple Harnack chain for weak supersolutions.
\begin{lemma}
	[Weak forward Harnack chains] \label{FChainMeas} Let $\Omega\subset\R^n$ be a domain and let $T>0$. Let $x,y$ be two points in $\Omega$ and assume that there exist a sequence of balls $\{B_{4r}(x_j)\}_{j=0}^k$ such that $x_0=x$, $x_k=y$, $B_{4r}(x_j)\subset\Omega$ for all $j=0,...,k$ and that $x_{j+1} \in B_{r}(x_j)$, $j=0,\ldots,k-1$. Assume that $u$ is a continuous non-negative weak supersolution in $\Omega_T$ with
	\begin{equation*}
		\bar\Lambda:=\fint_{B_{r}(x_0)} u(x,t_0) \dx > 0\,.
	\end{equation*}
	There exist constants $\bar c_i \equiv \bar c_i(p,n) >1$, $i \in \{1,2\}$, such that if
	\begin{align*}
		&t _0 + \tau_k \bar\Lambda^{2-p} r^p< T\,, \qquad \tau_k := \bar c_1 \sum_{j=0}^{k} \bar c_2^{j(p-2)} \,,
	\end{align*}
	then
	\begin{equation*}
		\fint_{B_{r}(x)} u(x,t_0) \dx \leq \bar c_2^{k+1} \inf_{z \in B_{2r}(y)} u(z, t_0 + \tau_k \bar\Lambda^{2-p} r^p)\,.
	\end{equation*}
	Furthermore, constants $\bar c_i$, $i \in \{1,2\}$, are stable as $p \to 2^+$. In particular, when $p=2$, then $\tau_k = \bar c_1(k+1)$ with $\bar c_1 = \bar c_1(n)$.
\end{lemma}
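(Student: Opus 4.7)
The plan is to iterate the weak Harnack inequality of \Cref{TuomoWH} along the chain, collecting at each step a multiplicative loss $\bar c_2 := 2 C_2$ and a waiting time governed by a running lower bound $\Lambda_j := \bar\Lambda\,\bar c_2^{-j}$. With $\bar c_1 := C_1$, I set the a priori increments $T_1^{(j)} := \bar c_1 r^p \Lambda_j^{2-p}$, so that $\sum_{j=0}^k T_1^{(j)} = \tau_k \bar\Lambda^{2-p} r^p$ exactly. The inductive invariant at step $j$ reads $\fint_{B_r(x_j)} u(\cdot, t_j)\dx \geq \Lambda_j$ for some time $t_j \leq t_0 + \sum_{i=0}^{j-1} T_1^{(i)}$; the base case $j=0$ is the hypothesis of the lemma.

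For the inductive step, $B_{4r}(x_j) \subset \Omega$ lets me apply \Cref{TuomoWH} at time $t_j$. Writing $\alpha_j := \fint_{B_r(x_j)} u(\cdot, t_j)\dx \geq \Lambda_j$ and $T_1 := C_1 r^p \alpha_j^{2-p}$, the fact that $p > 2$ forces $T_1 \leq T_1^{(j)}$. Combined with the global assumption and the invariant bound on $t_j$, this gives $T_1 \leq T_1^{(j)} < T - t_j$, so the ``in particular'' clause of \Cref{TuomoWH} applies and yields
\begin{equation*}
	\Lambda_j \leq \alpha_j \leq 2 C_2 \inf_{B_{2r}(x_j) \times [t_j + T_1/2,\, t_j + T_1]} u\,,
\end{equation*}
where continuity of $u$ lets the essential infimum extend to the closed time interval. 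Since $x_{j+1} \in B_r(x_j)$ forces $B_r(x_{j+1}) \subset B_{2r}(x_j)$, choosing any $t_{j+1} \in [t_j + T_1/2,\, t_j + T_1]$ gives $\fint_{B_r(x_{j+1})} u(\cdot, t_{j+1})\dx \geq \Lambda_j/\bar c_2 = \Lambda_{j+1}$, preserving the invariant. Iterating $k$ times places us at $(y, t_k)$, and a final, $(k+1)$-st application of \Cref{TuomoWH} at $(y, t_k)$ converts the average bound on $B_r(y)$ into the claimed infimum bound on $B_{2r}(y)$, producing the overall multiplicative loss $\bar c_2^{k+1}$.

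Stability as $p \to 2^+$ is immediate: $\bar c_1, \bar c_2$ inherit the stability of $C_1, C_2$, and at $p=2$ the exponent $\bar c_2^{j(p-2)}$ collapses to $1$, giving $\tau_k = \bar c_1(k+1)$. The main technical subtlety is time bookkeeping: whenever $\alpha_j > \Lambda_j$ the actual box radius $T_1$ is strictly smaller than the a priori $T_1^{(j)}$, so the naive iteration may land at some $t_{k+1} \leq t_0 + \tau_k \bar\Lambda^{2-p} r^p$ rather than exactly at the claimed time. I would absorb this by enlarging $\bar c_1$ to provide slack and appending a bounded number of additional ``wait'' iterations at $y$ to reach the specified time, which alters the constants only by a harmless fixed multiplicative factor.
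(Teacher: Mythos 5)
Your induction applies \Cref{TuomoWH} to the raw supersolution $u$ at every step, so the realized waiting time $T_1 = C_1 r^p \alpha_j^{2-p}$ is governed by the actual mean $\alpha_j := \fint_{B_r(x_j)} u(\cdot,t_j)\dx$, for which you have only the one-sided bound $\alpha_j \geq \Lambda_j$. Since $p>2$ and $\alpha_j$ has no a priori upper bound, $T_1$ can be an arbitrarily small fraction of your budgeted increment $T_1^{(j)} = C_1 r^p \Lambda_j^{2-p}$, so after $k$ steps the total elapsed time may be almost nothing of $\tau_k\bar\Lambda^{2-p}r^p$. You flag this yourself, but the proposed repair does not work. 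The number of extra ``wait'' iterations at $y$ needed to bridge a gap of size comparable to $\tau_k\bar\Lambda^{2-p}r^p$ is not bounded independently of $k$ (and blows up as $p\to 2^+$, since the geometric gain $\bar c_2^{\,p-2}$ per wait iteration tends to $1$); each such iteration costs another factor $\bar c_2$, which destroys both the exponent $k+1$ and the $p$-stability the lemma asserts. Enlarging $\bar c_1$ makes things worse, not better, because it only lengthens the target time.

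The paper closes exactly this gap with a truncation device you are missing. At step $j$ one applies \Cref{TuomoWH} not to $u$ but to $u_j := \min(u,\Lambda_j)$ with $\Lambda_j := (2C_2)^{-j}\bar\Lambda$, which is still a non-negative continuous weak supersolution. The preceding step gives the \emph{pointwise} bound $u \geq \Lambda_j$ on $B_r(x_j)$ at time $t_j$ (via the infimum conclusion of \Cref{TuomoWH} over $B_{2r}(x_{j-1}) \supset B_r(x_j)$), so the truncation makes $\fint_{B_r(x_j)} u_j(\cdot,t_j)\dx$ equal to $\Lambda_j$ \emph{exactly}, not merely $\geq \Lambda_j$. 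Consequently the increment in \Cref{TuomoWH} is exactly $C_1\Lambda_j^{2-p}r^p$, the arrival times $t_{j+1}$ are deterministic, and after $k+1$ applications one lands precisely at $t_0+\tau_k\bar\Lambda^{2-p}r^p$ with the constant $\bar c_2^{\,k+1}=(2C_2)^{k+1}$. You should carry the truncation $u_j$ (and the pointwise invariant $u_j \equiv \Lambda_j$ on $B_r(x_j)$ at time $t_j$) through the induction rather than the untruncated mean bound.
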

\begin{proof}
	Using \Cref{TuomoWH} we first get that
	\begin{equation} \label{WF initial setp}
		 \frac{\bar\Lambda}{2C_2} \leq \inf_{z \in B_{2r}(x_0)} u(z,t_1) \,, \qquad t_1 := t_0 + C_1 \bar\Lambda^{2-p} r^p\,.
	\end{equation}
	Define then
	\begin{equation*}
		 u_j := \min(u, \Lambda_j)\,, \qquad \Lambda_j := \left(2C_2\right)^{-j} \bar \Lambda \,, \qquad t_{j+1}:= t_{j} + C_1 \Lambda_j^{2-p} r^p \,,
	\end{equation*}
	for $j=\{1,\ldots,k-1\}$.
	Assume inductively, that for $t_{i+1} \leq T$ we have 
	\begin{equation*}
		B_{r}(x_{i}) \subset B_{2r}(x_{i-1})\,,
	\end{equation*}
	and
	\begin{equation*}
		 u_i(z,t_i) = \Lambda_i \qquad \mbox{for } \; z\in B_{r}(x_{i})\,,
	\end{equation*}
	hold for $i \in \{0,\ldots,j\}$. For $j=1$ this is certainly the case as we see from \Cref{WF initial setp}. Since $u_j$ is a non-negative weak supersolution, \Cref{TuomoWH} gives us
	\begin{equation*}
		\Lambda_{j+1} = \frac{\Lambda_j}{2C_2} \leq \inf_{z \in B_{2r}(x_j)} u_j(z,t_{j+1})\,,
	\end{equation*}
	and hence also
	\begin{equation*}
		 u_{j+1}(z,t_{j+1}) = \Lambda_{j+1} \qquad \mbox{for } \; z\in B_{r}(x_{j+1}) \,.
	\end{equation*}
	This proves the induction step. By the construction
	\begin{equation*}
		 \inf_{z \in B_{r}(y)} u_{k}(z,t_{k}) = \Lambda_k
	\end{equation*}
	holds. Thus, applying \Cref{TuomoWH} one more time we get
	\begin{equation*}
		 \inf_{z \in B_{2r}(y)} u(z, \bar t) \geq \left(2C_2\right)^{-(k+1)} \bar \Lambda\,,
	\end{equation*}
	with
	\begin{equation*}
		 \bar t := t_ 0 + C_1 \sum_{j=0}^{k} \left(2C_2\right)^{j(p-2)} \bar \Lambda^{2-p} r^p \,.
	\end{equation*}
	Setting $\bar c_1 = C_1$ and $\bar c_2= 2 C_2$ completes the proof of the lemma.
\end{proof}

For $p$-parabolic functions we have the following pointwise version of \Cref{FChainMeas}.

\begin{proposition}
	Let $\Omega\subset\R^n$ be a domain and let $T>0$. Let $x,y$ be two points in $\Omega$ and assume that there exist a sequence of balls $\{B_{4r}(x_j)\}_{j=0}^k$ such that $x_0=x$, $x_k=y$, $B_{4r}(x_j)\subset\Omega$ for all $j=0,...,k$ and that $x_{j+1} \in B_{r}(x_j)$, $j=0,\ldots,k-1$. Assume that $u$ is a non-negative $p$-parabolic function in $\Omega_T$ and assume that $u(x,t_0)>0$. There exist constants $c\equiv c(p,n)$ and $c_1\equiv c_1(p,n,k)>1$ such that if
	\begin{equation*}
		t_0 - (c_h/u(x,t_0))^{p-2} (4r)^p > 0,\ t_0 + c_1(k) u(x,t_0)^{2-p} r^p <T\,,
	\end{equation*}
	then
	\begin{equation*}
		u(x,t_0) \leq c^{k} \inf_{z \in B_r(y)} u(z,t_0+c_1(k) u(x,t_0)^{2-p} r^p)\,.
	\end{equation*}
	Furthermore, $c_1$ satisfies the estimate
	\begin{equation*}
		\tilde c_1 k \leq c_1 \leq \tilde c_1 (k+1) c^{(k+1)(p-2)}
	\end{equation*}
	with $\tilde c_1 \equiv \tilde c_1(p,n)$ and $\tilde c_1(p,n) \to \tilde c_1(n)$ as $p \to 2$.
\end{proposition}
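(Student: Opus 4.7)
The plan is to combine a single application of the intrinsic Harnack inequality \Cref{Har} at the starting point with one run of the weak forward Harnack chain \Cref{FChainMeas} along the given sequence of balls. The Harnack inequality converts the pointwise value $u(x,t_0)$ into a uniform lower bound over $B_r(x_0)$ at a slightly later time, which is then fed as the initial $L^1$ average into the supersolution chain. Since $u$ is continuous and $p$-parabolic it is in particular a non-negative continuous weak supersolution, so \Cref{FChainMeas} applies.

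For the first step, set $\theta := (c_h/u(x,t_0))^{p-2}$. The backward condition $t_0 - \theta(4r)^p > 0$ is assumed, $B_{4r}(x_0) \subset \Omega$ is given, and the forward-in-time condition $t_0 + \theta(4r)^p < T$ will be implied by the assumption on $c_1(k)$ once we arrange $c_1(k) \geq 4^p c_h^{p-2}$. Thus \Cref{Har} yields
\begin{equation*}
\bar \Lambda := \fint_{B_r(x_0)} u(z, t_0 + \theta r^p)\dx \geq \frac{u(x,t_0)}{C_h}.
\end{equation*}
For the second step, apply \Cref{FChainMeas} with initial time $t_0 + \theta r^p$, initial ball $B_r(x_0)$, and the chain $\{B_{4r}(x_j)\}_{j=0}^k$. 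Using $\bar\Lambda^{2-p} \leq C_h^{p-2} u(x,t_0)^{2-p}$, the forward-in-time requirement of that lemma is ensured by setting
\begin{equation*}
c_1(k) := c_h^{p-2} + C_h^{p-2} \tau_k,
\end{equation*}
which also dominates the $4^p c_h^{p-2}$ required for the first step. Combining the conclusion of \Cref{FChainMeas} with the lower bound on $\bar\Lambda$, and passing from $B_{2r}(y)$ to the smaller ball $B_r(y)$ (over which the infimum only increases), we obtain
\begin{equation*}
u(x,t_0) \leq C_h \bar c_2^{k+1} \inf_{z\in B_r(y)} u(z, t_0 + c_1(k) u(x,t_0)^{2-p} r^p).
\end{equation*}
Choosing $c := C_h \bar c_2^2$ gives $C_h \bar c_2^{k+1} \leq c^k$ for every $k \geq 1$, while the trivial case $k=0$ is just \Cref{Har}.

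It remains to quantify $c_1(k)$. From $\tau_k = \bar c_1 \sum_{j=0}^k \bar c_2^{j(p-2)}$ and $p \geq 2$ one reads off $\bar c_1(k+1) \leq \tau_k \leq \bar c_1(k+1)\bar c_2^{k(p-2)}$, which gives the desired two-sided bound
\begin{equation*}
\tilde c_1 k \leq c_1(k) \leq \tilde c_1(k+1) c^{(k+1)(p-2)}
\end{equation*}
with $\tilde c_1$ built from $c_h^{p-2}, C_h^{p-2}$ and $\bar c_1$; the stated stability of all these constants as $p \to 2$ yields $\tilde c_1(p,n) \to \tilde c_1(n)$ in that limit.

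The one delicate point is the opening move: \Cref{FChainMeas} requires measure-valued initial data, so one must first upgrade the pointwise value $u(x,t_0)$ to an average $\bar\Lambda$. The intrinsic Harnack inequality is the natural tool, but it demands the backward-in-time clearance $t_0 - \theta(4r)^p > 0$; this is exactly the first hypothesis of the proposition. Crucially, only \emph{one} application of \Cref{Har} is needed, so no further backward-in-time restrictions propagate through the chain, and the resulting constant remains $p$-stable.
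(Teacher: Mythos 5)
Your overall strategy---one application of \Cref{Har} to convert the pointwise value $u(x,t_0)$ into a lower bound on the $L^1$ average over $B_r(x_0)$, followed by one run of \Cref{FChainMeas}---is exactly the intended route, and the bookkeeping of the constants $c$ and $c_1$, including the two-sided bound on $c_1(k)$ coming from $\tau_k$ and the $p$-stability, is carried out correctly.

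There is, however, one genuine gap in the final combination step. \Cref{FChainMeas} produces the bound at the specific time
\begin{equation*}
\left(t_0 + \theta r^p\right) + \tau_k \bar\Lambda^{2-p} r^p, \qquad \bar\Lambda := \fint_{B_r(x_0)} u(z,t_0 + \theta r^p)\,\de z,
\end{equation*}
and the intrinsic waiting time $\tau_k \bar\Lambda^{2-p} r^p$ depends on the value of $\bar\Lambda$, about which you only know the one-sided bound $\bar\Lambda \geq u(x,t_0)/C_h$. Consequently you only know that the landing time is \emph{at most} $t_0 + c_1(k)u(x,t_0)^{2-p}r^p$, not that it equals it, and there is no parabolic monotonicity that lets you push the estimate forward to the exact time claimed in the proposition. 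The standard repair, which the authors use repeatedly elsewhere (e.g.\ in the proof of \Cref{NTA FChainMeas}), is to apply \Cref{FChainMeas} not to $u$ but to the truncation $\tilde u := \min\bigl(u, u(x,t_0)/C_h\bigr)$, which is again a continuous non-negative weak supersolution. By \Cref{Har}, $\tilde u \equiv u(x,t_0)/C_h$ on $B_r(x_0)\times\{t_0+\theta r^p\}$, so $\bar\Lambda = u(x,t_0)/C_h$ \emph{exactly}, the chain's landing time is precisely $t_0 + (c_h^{p-2} + C_h^{p-2}\tau_k)u(x,t_0)^{2-p}r^p$, and since $\tilde u \le u$ the conclusion transfers to $u$ at that exact time. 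With this one-line change your argument is complete. (As a side note, your dispatch of the case $k=0$ via ``just \Cref{Har}'' would formally need $c^0 = 1 \geq C_h$, which fails; presumably the chain is meant to have $k\geq 1$, but it is worth flagging.)
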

\begin{proof}
	After applying \Cref{Har} once, the result follows from \Cref{FChainMeas}.
\end{proof}

We next focus on cylindrical NTA-domains. The first theorem, \Cref{NTA FChainMeas}, holds for weak supersolutions and shows how to bound the values of a solution at points close to the boundary using pointwise interior values. A remarkable fact of the proof is that the waiting time is explicitly defined and, as $p\to2$, it gives a supersolution version of the theorem of Salsa~\cite[Theorem C]{S} as alluded to in the introduction, see \Cref{Salsa Harnack}. The proof uses heavily the assumptions on NTA-domains and iterations of~\Cref{FChainMeas}.

\begin{theorem}
	\label{NTA FChainMeas} Let $\Omega\subset\R^n$ be an NTA-domain with constants $M$ and $r_0$, let $x_0 \in \partial \Omega$, $T>0$ and let $0 < r < r_0$. Let $x,y$ be two points in $\Omega \cap B_r(x_0)$ such that
	\begin{equation*}
		\varrho := d(x,\partial \Omega) \leq r \qquad \mbox{and} \qquad d(y,\partial \Omega) \geq \frac{r}{4}\,.
	\end{equation*}
	Assume that $u$ is a non-negative continuous weak supersolution in $\Omega_T$, and assume that
	\begin{equation} \label{eq:NTAFChainMeas}
		\Lambda:=\fint_{B_{\varrho/4}(x)} u(z,t_0) \de z > 0\,.
	\end{equation}
	Let $\delta \in (0,1]$. Then there exist positive constants $c_i \equiv c_i(M,p,n)$, $i\in\{1,2,3\}$, such that if $t_0 + \tau < T $, where
	\begin{equation*}
		\tau := \delta^{p-1} \left( c_2^{-1/\delta} \left( \frac{r}{\varrho} \right)^{- c_3/\delta} \Lambda \right)^{2-p} r^p \,,
	\end{equation*}
	then
	\begin{equation*}
		\fint_{B_{\varrho/4}(x)} u(z,t_0) \de z  \leq c_1^{1/\delta} \left( \frac{r}{\varrho} \right)^{c_3/\delta} \inf_{z \in B_{r/16}(y)} u(z ,t_0 + \tau)\,.
	\end{equation*}
	Furthermore, constants $c_i$, $i \in \{1,2,3\}$, are stable as $p \to 2^+$.
\end{theorem}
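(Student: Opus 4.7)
\emph{Strategy.} The plan is to concatenate the weak Harnack chain of \Cref{FChainMeas} along an NTA uniform curve from $x$ to $y$, running through a geometric sequence of scales between $\varrho$ and $r$, and then to trade the accumulated multiplicative constant against the cumulative waiting time by tuning the number of decay steps spent at each scale. First I would apply the uniform condition in \Cref{NTA} to obtain a rectifiable curve $\gamma\colon[0,L]\to\Omega$, parametrized by arclength, with $\gamma(0)=x$, $\gamma(L)=y$, $L\le M|x-y|\le 2Mr$ and the carrot-type property $d(\gamma(s),\partial\Omega)\ge \min(s,L-s)/M$. Along the initial portion of $\gamma$ the interior distance therefore grows linearly from $\varrho$, and one reaches $y$ with $d(y,\partial\Omega)\ge r/4$.

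\emph{Multi-scale chain.} Next I would decompose $\gamma$ into dyadic scales indexed by $j=0,1,\dots,N$, with $N\asymp\log_2(r/\varrho)$. At scale $j$ one works with balls of radius $r_j:=2^j\varrho/(C_\ast M)$, where $C_\ast=C_\ast(n,p)$ is chosen so that $B_{4r_j}(\cdot)\subset\Omega$ on the portion of $\gamma$ where the boundary distance is $\gtrsim 2^j\varrho$, and so that this portion, of length $\lesssim Mr_j$, can be covered by a chain of balls whose centers are $\le r_j$ apart, as required by \Cref{FChainMeas}. Applying the lemma at each scale with $k_j$ steps and telescoping the outputs yields, at some time $t_0+\tau$,
\begin{equation*}
\fint_{B_{r/16}(y)} u(z,t_0+\tau)\,\de z \;\ge\; \frac{\Lambda}{K},\quad K:=\prod_{j=0}^{N-1}\bar c_2^{\,k_j+1},\quad \tau\;\lesssim\;\bar c_1\sum_{j=0}^{N-1}\bar c_2^{\,k_j(p-2)}\,\Lambda_j^{2-p}\,r_j^{\,p},
\end{equation*}
with $\Lambda_j:=\Lambda\prod_{\ell<j}\bar c_2^{-(k_\ell+1)}$ the cumulative averaged lower bound entering scale $j$. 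Here scale-to-scale passage is possible because \Cref{FChainMeas} delivers its output on a ball of radius $2r_j$, which at the appropriate point along $\gamma$ contains the starting ball at scale $j+1$.

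\emph{Balancing and main obstacle.} To recover the precise statement I would set $k_j\equiv k(\delta)\asymp 1/\delta$ uniformly in $j$; the product $K$ is then controlled by $c_1^{1/\delta}(r/\varrho)^{c_3/\delta}$, while the sum for $\tau$, dominated by its last scale when $p>2$ (and summed arithmetically when $p=2$), produces a prefactor $\delta^{p-1}$ against $K^{p-2}\Lambda^{2-p}r^p$, matching the form $\tau=\delta^{p-1}(c_2^{-1/\delta}(r/\varrho)^{-c_3/\delta}\Lambda)^{2-p}r^p$ stated in the theorem. A final application of \Cref{TuomoWH} on $B_{r/16}(y)$ converts the averaged lower bound to the pointwise infimum, losing only an $O(1)$ factor that is absorbed into $c_1$. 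The main obstacle is the precise bookkeeping: the nested geometric sums for $K$ and $\tau$ must be assembled so that their evaluation is genuinely $p$-stable (the dominant-last-term reasoning degenerates as $p\to 2^+$, where direct arithmetic summation must take over in order to recover Salsa's Gaussian behavior~\Cref{Salsa Harnack}), and the coupling $k_j\asymp 1/\delta$ must produce the stated dependencies $c_2^{1/\delta}$, $(r/\varrho)^{c_3/\delta}$ and the prefactor $\delta^{p-1}$ simultaneously with constants $c_1,c_2,c_3$ depending only on $M,p,n$.
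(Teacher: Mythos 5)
Your overall strategy---cover an NTA uniform curve from $x$ to $y$ by chains of admissible balls at a geometric sequence of scales, iterate \Cref{FChainMeas}, and then tune a free parameter $\asymp 1/\delta$ to trade the accumulated Harnack constant against the accumulated waiting time---is exactly the right idea, and it is the one used in the paper. However, there is a genuine structural gap in the way you set up the tuning, and it is not a matter of bookkeeping.

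The gap is in the choice $r_j := 2^j\varrho/(C_\ast M)$ with $C_\ast = C_\ast(n,p)$ fixed. If you keep the ball radius independent of the tuning parameter and only increase the number of chain links $k_j\asymp 1/\delta$, the extra links are geometrically redundant (the segment at scale $j$ has length $\lesssim Mr_j$, which only needs $\asymp M$ balls of radius $r_j$), and each extra link pays the same per-step waiting time $\asymp\Lambda_j^{2-p}r_j^p$ without buying anything. Working out your own display: with $k_j\equiv k$ and $r_j = 2^j\varrho/(C_\ast M)$ the waiting time satisfies, for $p>2$,
\begin{equation*}
\tau\;\asymp\;\bar c_1\,\bar c_2^{\,k(p-2)}\,(C_\ast M)^{-p}\,\Lambda^{2-p}\,r^p\,\left(\frac{r}{\varrho}\right)^{(k+1)(p-2)\log_2\bar c_2}\,,
\end{equation*}
whereas $\delta^{p-1}K^{p-2}\Lambda^{2-p}r^p$ has the same $(r/\varrho)$- and $\Lambda$-powers but the prefactor $\delta^{p-1}$. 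Matching would force $\bar c_1\bar c_2^{\,k(p-2)}(C_\ast M)^{-p}\leq\delta^{p-1}$; with $k\asymp 1/\delta$ and $\bar c_2>1$ the left side \emph{diverges} as $\delta\to 0$ while the right side vanishes, so your claimed "prefactor $\delta^{p-1}$" does not come out. The defect is even starker at $p=2$: the arithmetic sum gives $\tau\asymp k\,r^2\asymp r^2/\delta$, which is $\delta^{-2}$ times larger than the Gaussian-scale $\delta r^2$ you need in order to recover Salsa's estimate.

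What is missing is the factor $1/N$ in the ball radius. In the paper's Step 1, the scale-$j$ segment of length $\asymp 2^j\varrho$ is covered by $N$ balls of radius $\varrho_j\asymp 2^j\varrho/N$, where $N\geq 2^9M$ is the free parameter. The per-scale waiting time is then $\tau_N\Lambda_j^{2-p}\varrho_j^p\asymp N\bar c_2^{N(p-2)}\Lambda_j^{2-p}(2^j\varrho)^p\,N^{-p}$; the extra $N^{-p}$ from the shrunk radius combines with the $N$ from the chain length to give the crucial $N^{1-p}$, and it is precisely this $N^{1-p}$ which, via $N\asymp 1/\delta$, produces the $\delta^{p-1}$ prefactor stably in $p$. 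So the ball radius must shrink proportionally to the chain length; your $C_\ast$ cannot depend only on $n,p$.

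A secondary point: the theorem asserts the bound at the exact time $t_0+\tau$ for the displayed $\tau$, and the free parameter $N$ (or your $k_j$) is discrete, so it cannot by itself hit an arbitrary target waiting time. The paper introduces a second, continuous parameter --- the truncation level $\sigma$ in $\Lambda_1=\sigma\Lambda$ with $\sigma\in(0,(2C_2)^{-1}]$ --- and uses the intermediate value theorem in Step 3 (inequalities \Cref{forward ineq choice 1,forward ineq choice 2}) to match $t_{\hat k+1}=t_0+\tau$ exactly. Your proposal does not address this; "at some time $t_0+\tau$" is weaker than what the statement requires.

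To repair the argument, replace $r_j=2^j\varrho/(C_\ast M)$ by $\varrho_j\asymp 2^j\varrho/N$ with $N\asymp 1/\delta$ (so each scale uses $N$ balls, not a $\delta$-independent number), and add a continuous truncation parameter to tune the total waiting time exactly. With those two changes your outline coincides with the paper's proof.
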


\begin{proof}
	We split the proof into three steps.
	\subsubsection*{Step 1: Parametrization of the curve connecting $x$ and $y$} According to the uniform condition \cref{NTA3} in \Cref{NTA}, we can find a rectifiable curve $\gamma$ connecting $x$ and $y$ such that $ \gamma ( 0 ) = x$, $\gamma ( 1 ) = y$, and
	\begin{enumerate}[label=\bf{({\alph*})}]
		\item \label{WFC a} $H^1 ( \gamma ) \, \leq \, M \, | w_1 - w_2 |,$
		\item \label{WFC b} $\min\{H^1(\gamma([0,t])), \, H^1(\gamma([t,1]))\, \}\, \leq \, M \, d ( \gamma(t), \partial \Omega)$, for all $t \in [0,1]$.
	\end{enumerate}
	We call a ball $B \subset \Omega$ \textbf{admissible} if $4B \subset \Omega$, and is thus eligible for the Harnack inequality.
 	Our goal with with step is to construct a sequence of admissible balls covering the curve $\gamma$. In the following we may, without loss of generality, assume that $H^1(\gamma([0,1]))>2^{-4} r$. We define $\hat t_1,\hat t_2 \in (0,1)$ such that $H^1(\gamma([0,\hat t_1])) = 2^{-5} r$ and $H^1(\gamma([\hat t_2,1])) = 2^{-5} r$. The technical part will be in the interval $(0,\hat t_1)$. To continue we choose $k$ as the integer which satisfies $2^{-k} r \in (\varrho/16,\varrho/8]$. We define sequence of real number $\{s_j\}$ through
	\begin{equation*}
		 H^1 ( \gamma([0,s_j]) ) = 2^{-k + j } H^1 ( \gamma([0,\hat t_1]) ):= 2^{-k-5+j} r
		 \,, \qquad s_0 = 0\,.
	\end{equation*}
	Then, for any $s \in [s_{j},s_{j+1})$, \Cref{WFC a,WFC b} implies
	\begin{equation*}
		 d ( \gamma ( s), \partial \Omega ) \geq \frac{2^{-k+j-5}r}{M}
	\end{equation*}
	and
	\begin{equation*}
		 H^1( \gamma ([s_j, s_{j+1} ] )) = 2^{-k+j-5}r\,.
	\end{equation*}
	Thus, defining
	\begin{equation*}
		 \varrho_j := N^{-1} 2^{-k+j-5} r \,, \qquad N \in \mathbb{N}\,, \qquad N \geq 2^9 M\,,
	\end{equation*}
	we see that the piece $\gamma([s_j,s_{j+1}])$ can be covered with $N$ admissible balls of the type $B^{i,j} := B_{\varrho_j}(y_{i,j})$ such that $y_{i,j} \in \gamma([s_j,s_{j+1}])$ for $i \in \{1,\ldots,N\}$, $y_{i,j-1}\in B^{i,j}$ and $\gamma([s_j,s_{j+1}]) \subset \cup_i B^{i,j}$. Finally, we observe that the middle piece of the curve $\gamma([\hat t_1,\hat t_2])$, due to the definition of $\hat t_1,\hat t_2$ together with \Cref{WFC b} can be covered with $M\,N$ admissible balls of size $r/N$. Moreover we can cover the end piece $\gamma([\hat t_2,1])$ with $N$ admissible balls of size $r/N$ since $\gamma([\hat t_2, 1]) \subset B_{r/16}(y) \subset \Omega$. At this point, we consider $N \in \mathbb{N}$ to be a free parameter such that $N \geq 2^9 M$.
	
	\subsubsection*{Step 2: Iteration via Harnack estimates} Let now $\Lambda$ be as in \Cref{eq:NTAFChainMeas}. 	
	\Cref{TuomoWH} implies that if
	\begin{equation*}
		 t_0 + C_1\Lambda^{2-p} \varrho_0^p <T\,,
	\end{equation*}
	then
	\begin{equation*}
		 \inf_{z \in B_{\varrho_0}(x_0)} u(z, t_1) \geq \frac{1}{2 C_2} \Lambda \,, \quad t_1 := t_0 +C_1\Lambda^{2-p} \varrho_0^p\,.
	\end{equation*}
	Let
	\begin{equation*}
		 \Lambda_1 = \sigma \Lambda \,, \qquad \sigma \in (0,(2 C_2)^{-1}]\,.
	\end{equation*}
	Defining thus $u_1 := \min (u,\Lambda_1)$, we obtain by \Cref{FChainMeas} (see also the proof of that Lemma) that there exist constants $\bar c_1 \equiv \bar c_1(p,n)$ and $\bar c_2 \equiv \bar c_2(p,n)$, such that if
	\begin{equation} \label{eq:tauN estimate}
		 t_2 := t_1 + \tau_{N} \Lambda_1^{2-p} \varrho_1^p < T\,, \quad \tau_{N} := \bar c_1 \sum_{j=0}^{N-1} \bar c_2^{j(p-2)} \in [\bar c_1 N, \bar c_1 N \bar c_2^{N(p-2)}) \,,
	\end{equation}
	then
	\begin{equation*}
		 \inf_{z \in B_{\varrho_2}(\gamma(s_2))} u(z,t_2) = \inf_{z \in B_{2\varrho_1}(\gamma(s_2))} u(z,t_2) \geq \frac{\Lambda_1}{\bar c_2^{N}} =: \Lambda_2\,.
	\end{equation*}
	Define
	\begin{equation*}
		 \Lambda_{j+1} := \bar c_2^{-(j+1) N} \Lambda_1\,,
		 \qquad u_j := \min(u,\Lambda_j)\,,\qquad j \in \mathbb{N}\,,
	\end{equation*}
	let $\hat k := k+M+1$, and let
	\begin{align*}
		 t_{j+1} :=
		 \begin{cases}
		 	t_{j} +\tau_{N} \Lambda_{j}^{2-p} \varrho_{j}^p &\text{ if } j \in \{1,\ldots,k\}\\
		 	t_{j} +\tau_{N} \Lambda_{j}^{2-p} (r/N)^p  &\text{ if }j \in \{k+1,\ldots,\hat k\}  \,.
		 \end{cases}
	\end{align*}
	Iterating \Cref{FChainMeas}  it follows by induction  that
	\begin{equation} \label{eq:NTA FCM final bound}
		 u(y,t_{\hat k+1}) \geq \Lambda_{\hat k+1}\,.
	\end{equation}
	
	\subsubsection*{Step 3: Waiting time} Let us now analyze the waiting time $t_{\hat k+1}$, which we want to show to be precisely $t_0+\tau$ by a suitable choice of $\Lambda_1$. We have
	\begin{align}
		t_{\hat k+1} & = t_1 + C_1\Lambda^{2-p} \varrho_0^p+\tau_{N} \left [\sum_{j=1}^{k} \Lambda_{j}^{2-p} \varrho_{j}^p + \sum_{j=k+1}^{\hat k} \Lambda_{j}^{2-p} \varrho_{j}^p\right ] \notag\\
		& = t_0 + C_1\Lambda^{2-p} \varrho_0^p + N^{-p} \tau_N \Lambda_1^{2-p} r^p \bigg [2^{-kp}  \sum_{j=0}^{k} \left( 2^{p} \bar c_2^{(p-2)N} \right)^{j} \label{eq:badsum}\\
		&\qquad +  \bar c_2^{(p-2)kN}\sum_{j=0}^{M} \left( \bar c_2^{(p-2)N} \right)^{j} \bigg ]\label{eq:goodsum} \\
		&=: t_0 + \mathbf{T} \Lambda^{2-p} r^p \notag \,.
	\end{align}
	We can write the sum in \Cref{eq:badsum} as follows
	\begin{align*}
		2^{-kp} \sum_{j=0}^{k}\left( 2^{p} \bar c_2^{(p-2)N} \right)^{j} = 2^p \frac{\bar c_2^{(p-2) (k+1)N} - 2^{-(k+1)p}}{2^{p} \bar c_2^{(p-2)N}-1 }\,,
	\end{align*}
	while the sum in \Cref{eq:goodsum} can be estimated similarly to $\tau_N$ (see \Cref{eq:tauN estimate})
	\begin{equation*}
		\bar c_2^{(p-2)kN}\sum_{j=0}^{M} \left( \bar c_2^{(p-2)N} \right)^{j} \in \left (M \bar c_2^{(p-2)kN} , M \bar c_2^{(p-2)(k+M)N}\right ] \,.
	\end{equation*}
	Hence, recalling the definition of $\tau_N$ and $\mathbf{T}$, we get after some straightforward estimation that
	\begin{align} \label{forward fundamental inequality}
		\frac12 \bar c_1 \sigma^{2-p} N^{1-p} \left( \frac{r}{\varrho} \right)^{(p-2)\bar c_4 N} \leq \mathbf{T} \leq
		2 \bar c_3 \sigma^{2-p} N^{1-p} \bar c_3^{(p-2)N} \left( \frac{r}{\varrho} \right)^{(p-2)\bar c_4 N}
	\end{align}
	for new constants $\bar c_3, \bar c_4$ depending only on $p,n,M$. We now choose $N = \tilde c / \delta$ and let $\tilde c$ be a degree of freedom.
	First note that choosing $\sigma_1 = (2C_2)^{-1}$, then choose $c_3 = \tilde c c_4$ and $c_2 = [\bar c_3]^{\tilde c}$, then for a large enough $\tilde c = \tilde c(p,n,M)$
	\begin{align} \label{forward ineq choice 1}
		2 \bar c_3 (2C_2)^{p-2} \tilde c^{1-p} \bar c_3^{(p-2)\tilde c / \delta} \left( \frac{r}{\varrho} \right)^{(p-2)\bar c_4 \tilde c / \delta} <  c_2^{(p-2)/\delta} \left( \frac{r}{\varrho} \right)^{(p-2)c_3/\delta}\,.
	\end{align}
	Second we see that choosing $\sigma_2 = c_5^{-1/\delta}$ for large enough $c_5 = c_5(p,n,M)$ the following holds
	\begin{equation} \label{forward ineq choice 2}
		 \frac12 \bar c_1 \sigma_2^{2-p} \tilde c^{1-p} \left( \frac{r}{\varrho} \right)^{(p-2)\bar c_4 \tilde c / \delta} > c_2^{(p-2)/\delta} \left( \frac{r}{\varrho} \right)^{(p-2)c_3/\delta}\,.
	\end{equation}
	With \Cref{forward ineq choice 1,forward ineq choice 2,forward fundamental inequality} at hand we see that there is a choice of $\sigma \in [\sigma_2,\sigma_1]$ such that
	\begin{align*}
		\mathbf{T} = \delta^{p-1} c_2^{(p-2)/\delta} \left( \frac{r}{\varrho} \right)^{(p-2)c_3/\delta}\,,
	\end{align*}
	and thus we have proved $t_{\hat k+1} = t_0 + \mathbf{T} \Lambda^{2-p} r^p = t_0 + \tau$.
	This together with \Cref{eq:NTA FCM final bound} finishes the proof by taking suitably large $c_1$ in the statement.
\end{proof}

For $p$-parabolic functions we have the following pointwise version of \Cref{NTA FChainMeas}.

\begin{theorem}
	\label{NTA FChain} Let $\Omega\subset\R^n$ be an NTA-domain with constants $M$ and $r_0$, let $x_0 \in \partial \Omega$, $T>0$ and let $0 < r < r_0$. Let $x,y$ be two points in $\Omega \cap B_r(x_0)$ such that
	\begin{equation*}
		\varrho := d(x,\partial \Omega) \leq r \qquad \mbox{and} \qquad d(y,\partial \Omega) \geq \frac{r}{4}\,.
	\end{equation*}
	Assume that $u$ is a non-negative $p$-parabolic function in $\Omega_T$, and assume that $u(x,t_0)$ is positive. Let $\delta \in (0,1]$. Then there exist constants $c_i \equiv c_i(M,p,n)$, $i\in\{1,2,3\}$, such that if
	\begin{equation*}
		t_0 - (c_h/u(x,t_0))^{p-2} (\delta \varrho)^p > 0\,, \qquad t_0 + \tau < T\,,
	\end{equation*}
	with
	\begin{equation*}
		\tau := \delta^{p-1} \left( c_2^{-1/\delta} \left( \frac{r}{\varrho} \right)^{- c_3/\delta} u(x,t_0) \right)^{2-p} r^p \,,
	\end{equation*}
	then
	\begin{equation*}
		u(x,t_0) \leq c_1^{1/\delta} \left( \frac{r}{\varrho} \right)^{c_3/\delta} \inf_{z \in B_{r/16}(y)} u(z ,t_0 + \tau)\,.
	\end{equation*}
	Furthermore, constants $c_i$, $i \in \{1,2,3\}$, are stable as $p \to 2^+$.
\end{theorem}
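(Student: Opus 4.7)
The plan is to reduce the pointwise statement to the iteration already carried out in the proof of \Cref{NTA FChainMeas}, by using a single application of the intrinsic Harnack inequality \Cref{Har} in place of the opening step that invoked \Cref{TuomoWH}. Once a pointwise lower bound on a small ball around $x$ is available, the remainder of the chaining argument along the NTA curve is essentially insensitive to how that lower bound was obtained.

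Concretely, I would first apply \Cref{Har} at radius $\delta\varrho/4$. Since $d(x,\partial\Omega)=\varrho$ and $\delta\le 1$, the ball $B_{\delta\varrho}(x)$ is contained in $\Omega$; the backward-in-time condition of \Cref{Har} reads $t_0-(c_h/u(x,t_0))^{p-2}(\delta\varrho)^p>0$, which is precisely the first hypothesis; and the forward-in-time condition follows from $t_0+\tau<T$ once the constants in $\tau$ are large enough, since a direct comparison shows that the Harnack shift is at most of order $\delta\cdot u(x,t_0)^{2-p}r^p$ and is therefore negligible compared to $\tau$. Harnack then gives
\[
	\inf_{z\in B_{\delta\varrho/4}(x)}u(z,t_1)\;\ge\;\frac{u(x,t_0)}{C_h},\qquad t_1:=t_0+\Bigl(\tfrac{c_h}{u(x,t_0)}\Bigr)^{p-2}\bigl(\tfrac{\delta\varrho}{4}\bigr)^p.
\]

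From here I would run Steps 2 and 3 of the proof of \Cref{NTA FChainMeas} with starting time $t_1$ and with the pointwise lower bound $\Lambda:=u(x,t_0)/C_h$ in place of the integral average there. For the parameter choice $N=\tilde c/\delta$ used in that proof, the first chain ball $B_{\varrho_0}(x)$ has radius of order $\varrho/(NM)$, which is bounded by $\delta\varrho/4$ once $\tilde c$ is taken large enough in terms of $M$ only; thus $B_{\varrho_0}(x)\subset B_{\delta\varrho/4}(x)$, and the induction begins directly from our Harnack estimate, bypassing the opening \Cref{TuomoWH} step. The waiting-time bookkeeping of Step 3 is then formally unchanged, producing again the asymptotics $\mathbf{T}\,\Lambda^{2-p}r^p$; substituting $\Lambda=u(x,t_0)/C_h$ brings in a constant factor $C_h^{p-2}$, and the initial shift $t_1-t_0$ is of the same form $\delta^p\,u(x,t_0)^{2-p}\varrho^p$. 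Both corrections contribute only to the bases of the exponentials $c_2^{(p-2)/\delta}$ and $(r/\varrho)^{(p-2)c_3/\delta}$, and so are absorbed by enlarging $c_2$ (and, if needed, $c_3$), using the flexibility $\sigma\in[\sigma_2,\sigma_1]$ to tune the total waiting time to exactly $\tau$. Stability as $p\to 2^+$ is inherited from the stability statements in \Cref{Har} and \Cref{NTA FChainMeas}.

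The only substantive point to verify is that the first chain ball $B_{\varrho_0}(x)$ of \Cref{NTA FChainMeas}'s construction indeed fits inside the Harnack ball $B_{\delta\varrho/4}(x)$, which fixes a lower bound on $\tilde c$ depending only on $M$; the remaining bookkeeping is a routine re-examination of the constants in Step 3 of that proof, where the key observation is that every stray constant enters only through the base of exponentials of $1/\delta$ and can therefore be swallowed without altering the form of $\tau$.
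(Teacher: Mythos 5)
Your proposal is correct and takes essentially the same route as the paper: the paper's proof of this theorem is the single sentence ``Applying \Cref{Har} once, we see that the theorem follows from \Cref{NTA FChainMeas},'' and you have supplied exactly the bookkeeping this one-liner compresses. You correctly apply \Cref{Har} at radius $\delta\varrho/4$ (which is precisely what the backward-in-time hypothesis $t_0-(c_h/u(x,t_0))^{p-2}(\delta\varrho)^p>0$ is calibrated for), observe that with the choice $N=\tilde c/\delta$ the first chain ball satisfies $B_{\varrho_0}(x)\subset B_{\delta\varrho/4}(x)$, and note that the extra factor $C_h$ and the $O(\delta^p u(x,t_0)^{2-p}\varrho^p)$ initial shift are absorbed into the exponential constants via the tuning of $\sigma$ in Step~3.
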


\begin{proof}
	Applying \Cref{Har} once, we see that the theorem follows from \Cref{NTA FChainMeas}.
\end{proof}


\subsection{Backward Harnack chains}
The philosophy of the forward Harnack chains in \Cref{ss.fchain} is that the data at the starting point will start to diffuse according to the intrinsic Harnack inequality. The finite speed of diffusion forces the waiting time to blow up if we wish to spread our information in an infinite chain. In the backward Harnack chains that we develop in this section the philosophy is reversed. Instead of looking to the future we look to the past. This means that if the value of the solution at a point $(y,s)$ is, say, one, then we ask the question: how large can the values in the past be without violating the fact that the solution is one at $(y,s)$.

We start with the weak version of the backward Harnack chains, valid for weak supersolutions.

\begin{theorem}
	\label{NTA BChainMeas} Let $\Omega\subset\R^n$ be an NTA-domain with with constants $M$ and $r_0$, let $x_0 \in \partial \Omega$, $T>0$, and let $0 < r < r_0$. Let $x,y$ be two points in $\Omega \cap B_r(x_0)$ such that
	\begin{equation*}
		\varrho := d(x,\partial \Omega) \leq r \qquad \mbox{and} \qquad d(y,\partial \Omega) \geq \frac{r}{4}\,.
	\end{equation*}
	Assume that $u$ is a non-negative continuous weak supersolution in $\Omega_T$, and assume that $u( y , s)$ is positive. Let $\delta \in (0,1]$. Then there exist positive constants $C_i \equiv C_i(p,n)$ and $c_i \equiv c_i(p,n,M)$, $i\in\{4,5\}$, such that if $s \in (\tau,T)$ and
	\begin{equation} \nonumber
		t \in [s - \tau\,, s - \delta^{p-1}\tau]\,,
	\end{equation}
	with
	\begin{equation*}
		\tau := C_4 \left[C_5 u(y,s) \right]^{2-p} r^p
	\end{equation*}
	then
	\begin{equation} \label{NTA BChainMeas est}
		\fint_{B_{\varrho/4}(x)} u(z,t)\de z \leq c_4^{1/\delta} \left( \frac{r}{\varrho} \right)^{c_5/\delta} u(y,s)\,.
	\end{equation}
	Furthermore, constants $c_i,C_i$, $i \in \{4,5\}$, are stable as $p \to 2^+$.
\end{theorem}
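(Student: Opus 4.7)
\emph{Strategy.} The statement is dual to Theorem~\ref{NTA FChainMeas}: the forward chain propagates intrinsic information forward in time, and the plan is to invert it. Let $\Lambda := \fint_{B_{\varrho/4}(x)} u(z,t) \de z$; we may assume $\Lambda > 0$, else the estimate is trivial. I will apply Theorem~\ref{NTA FChainMeas} starting from $(B_{\varrho/4}(x), t)$ with a scale parameter $\delta' \in (0,1]$ chosen so that the landing time is exactly $s$; any such matched $\delta'$ yields
\[
\Lambda \leq c_1^{1/\delta'}(r/\varrho)^{c_3/\delta'} u(y,s),
\]
since $y \in B_{r/16}(y)$ and Theorem~\ref{NTA FChainMeas} bounds the infimum over that ball from below.

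\emph{Matching equation and case split.} The landing time is $t + A(\delta')\Lambda^{2-p}$ where $A(\delta') := (\delta')^{p-1} c_2^{(p-2)/\delta'} (r/\varrho)^{(p-2)c_3/\delta'} r^p$. Since $A(\delta') \to \infty$ as $\delta' \to 0^+$, its image over $(0,1]$ contains an interval $[A_{\min}, \infty)$ with $A_{\min} \equiv A_{\min}(p,n,M,r/\varrho)$. If $(s-t)\Lambda^{p-2} \geq A_{\min}$, one can solve $A(\delta') = (s-t)\Lambda^{p-2}$ for some $\delta' \in (0,1]$ and invoke the forward chain. If instead $(s-t)\Lambda^{p-2} < A_{\min}$, then $\Lambda < (A_{\min}/(s-t))^{1/(p-2)}$; combined with the lower bound $s - t \geq \delta^{p-1}\tau = \delta^{p-1} C_4 (C_5 u(y,s))^{2-p} r^p$, this produces a direct estimate $\Lambda \leq C_\ast \, \delta^{-(p-1)/(p-2)}(r/\varrho)^{\tilde c} u(y,s)$ with $\tilde c$ depending only on $p,n,M$, which fits into $c_4^{1/\delta}(r/\varrho)^{c_5/\delta} u(y,s)$ since $\delta^{-(p-1)/(p-2)} \leq c_4^{1/\delta}$ uniformly on $(0,1]$ for $c_4$ large, and $(r/\varrho)^{\tilde c} \leq (r/\varrho)^{c_5/\delta}$ because $r \geq \varrho$.

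\emph{Main obstacle.} The technical core lies in controlling $1/\delta'$ in the matched case. Writing $A(\delta') = (\delta')^{p-1} e^{\alpha/\delta'} r^p$ with $\alpha := (p-2)(\log c_2 + c_3 \log(r/\varrho))$, the matching equation together with $s - t \leq \tau$ yields, after logarithms,
\[
\alpha/\delta' \leq \log(C_4 C_5^{2-p}) + (p-2)\log\bigl(\Lambda/u(y,s)\bigr) + (p-1)\log(1/\delta').
\]
A bootstrap that substitutes the target bound $\Lambda \leq c_4^{1/\delta}(r/\varrho)^{c_5/\delta} u(y,s)$ into the right-hand side produces the linear estimate $1/\delta' \leq C/\delta + \mathrm{const}$ with $C = (\log c_4 + c_5 \log(r/\varrho))/(\log c_2 + c_3 \log(r/\varrho))$. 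The closure conditions $c_1^{C} \leq c_4$ and $c_3 C \leq c_5$ then determine admissible $(c_4, c_5)$ depending only on $p,n,M$: e.g., taking $c_4$ very large and $c_5 \geq c_3 \log c_4 / \log c_2$ closes the bootstrap and establishes~\eqref{NTA BChainMeas est}. Stability as $p \to 2^+$ is inherited from the corresponding stability of the constants in Theorem~\ref{NTA FChainMeas}.
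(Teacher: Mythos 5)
Your strategy of inverting the forward chain by exactly matching the landing time to $s$ is natural, but it runs into a genuine circularity that the proposal does not resolve. In the ``matched'' case the only information you extract from the matching equation $A(\delta')\Lambda^{2-p}=s-t\leq\tau$, after taking logarithms, is an inequality relating $1/\delta'$ to $\log(\Lambda/u(y,s))$; but the only upper bound available on $\Lambda/u(y,s)$ comes from the forward chain itself and is expressed in terms of $1/\delta'$. ``Substituting the target bound into the right-hand side'' is therefore not a bootstrap: it assumes the conclusion. To break the circle you would in fact need the ratio $c_2/c_1$ of the constants in \Cref{NTA FChainMeas} to satisfy $c_2>c_1$ (plug the forward bound $\Lambda\leq c_1^{1/\delta'}(r/\varrho)^{c_3/\delta'}u(y,s)$ into the matching inequality and observe that the coefficient of $1/\delta'$ reduces to $(p-2)\log(c_2/c_1)$; if this sign is not positive, $1/\delta'$ stays unbounded). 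That relation is not stated, is not apparent from the forward theorem's proof (where $c_1$ is eventually ``taken suitably large'' and $c_2$ arises independently), and you nowhere record it as a requirement. Without it the matched case has no closable estimate.

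The paper avoids the matching entirely. Its proof goes by contradiction: assume $\fint u(z,t)\,\de z > H(r/\varrho)^{c_5/\delta}$ for $H,c_5$ to be chosen. This provides a \emph{lower} bound on $\Lambda$, which (since $2-p<0$) turns into an \emph{upper} bound on the forward waiting time $\tilde\tau$ for a \emph{fixed} choice of the forward parameter $\tilde\delta=\delta\min\{1,C_4\}^{1/(p-1)}$. One then checks $t+\tilde\tau\leq s$ rather than forcing equality. The remaining time gap from $t+\tilde\tau$ to $s$ is bridged by a second step that is entirely absent from your proposal: truncate $\tilde u=\min(4C_2,u)$, which is still a supersolution, and apply \Cref{TuomoWH} with a radius $\tilde r\leq r/16$ tuned so that the truncated chain lands exactly at time $s$; the truncation caps the level and hence the intrinsic waiting time, which is what makes this tuning possible. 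This yields $\tilde u(y,s)\geq 2$ and contradicts the normalization $u(y,s)=1$. The fixed-$\tilde\delta$-plus-truncation mechanism is precisely what sidesteps the circular dependence between $\delta'$ and $\Lambda$; if you want to rescue the matching approach, you would need to either prove the constant relation noted above by re-examining the forward theorem's proof, or import some version of the truncation step.
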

\begin{remark}
	If we assume that $u \in C([0,T);L^2(\Omega))$ then we can replace $s \in (\tau,T)$ with $s \in [\tau,T)$. That is, the chain can be taken all the way to the initial time, i.e. $t = 0$.
\end{remark}
\begin{proof}
	After scaling, we may assume that $u(y,s) = 1$. Assume now the contrary to \Cref{NTA BChainMeas est}, i.e.
	\begin{equation}
		\label{eq:bh_counterMeas} \fint_{B_{\varrho/4}(x)} u(z,t)\de z > H \left( \frac{r}{\varrho} \right)^{c_5/\delta}
	\end{equation}
	for constants $c_5,H$ to be fixed. Then~\Cref{NTA FChainMeas} implies that with
	\begin{align*}
		\tilde \tau := & \tilde \delta^{p-1} \left( c_2^{-1/\tilde \delta} \left( \frac{r}{\varrho} \right)^{- c_3/\tilde \delta} \fint_{B_{\varrho/4}(x)} u(z,t)\de z \right)^{2-p} r^p
	\end{align*}
	we get
	\begin{equation}
		\label{eq:bh_harnackMeas} \fint_{B_{\varrho/4}(x)} u(z,t)\de z \leq c_1^{1/\tilde \delta} \left( \frac{r}{\varrho} \right)^{c_3/\tilde \delta} \inf_{z \in B_{r/16}(y)} u(z,t + \tilde \tau)
	\end{equation}
	with constants $c_i \equiv c_i(p,n,M)$, $i \in \{1,2,3\}$ and $\tilde \delta \in (0,\delta]$. Now we have an upper bound for $\tilde \tau$ by means of~\cref{eq:bh_counterMeas} as follows:
	\begin{align*}
		\tilde \tau \leq & \tilde \delta^{p-1} \left( c_2^{-1/\tilde \delta} \left( \frac{r}{\varrho} \right)^{- c_3/\tilde \delta} H \left( \frac{r}{\varrho} \right)^{c_5/\delta} \right)^{2-p} r^p \\
		= & \tilde \delta^{p-1} \left( c_2^{-1/\tilde \delta} H \right)^{2-p} r^p \\
		\leq & \delta^{p-1} \tau\,,
	\end{align*}
	provided that
	\begin{equation}
		\label{eq:bh_choice} H \geq C_5 c_2^{1/\tilde \delta} \,, \quad \tilde \delta := \delta \min\{1, C_4\}^{1/(p-1)}\,, \quad c_5 := \frac{c_3}{\min\{1, C_4\}^{1/(p-1)}}\,.
	\end{equation}
	Therefore we have that
	\begin{equation*}
		t + \tilde \tau \leq s\,.
	\end{equation*}
	Observe that both $C_4$ and $C_5$ are still to be fixed. Thus we need to carry the information from the time $t + \tilde \tau$ up to $s$. To this end, connecting~\cref{eq:bh_counterMeas,eq:bh_harnackMeas} with the choices in \Cref{eq:bh_choice} leads to
	\begin{equation}
		\label{eq:bh_step1Meas} H c_1^{-1/\tilde \delta} < \inf_{z \in B_{r/16}(y)} u(z,t + \tilde \tau) \,.
	\end{equation}
	Truncate $u$ as
	\begin{equation*}
		\tilde u = \min \left(4 C_2 , u \right) \,,
	\end{equation*}
	and take
	\begin{equation} \label{eq:bh_choice2}
		H := c_4^{1/\delta} \,, \qquad c_4 := \max\{ 4 C_2 c_1, C_5 c_2\} ^{1/\min\{1, C_4\}^{1/(p-1)}}\,,
	\end{equation}
	where $C_2$ is as in~\Cref{TuomoWH}. Then $\tilde u$ is a continuous weak supersolution, and we have by~\cref{eq:bh_choice,eq:bh_choice2,eq:bh_step1Meas} that
	\begin{equation*}
		\fint_{B_{\tilde r}(y) } \tilde u(z , t+ \tilde \tau) \de z = 4 C_2 \,, \qquad \tilde r \in (0,r/16]\,.
	\end{equation*}
	Applying thus the forward in time weak Harnack estimate in~\Cref{TuomoWH} gives
	\begin{equation*}
		4C_2 \leq 2C_2 \inf_{z \in B_{2 \tilde r}(y)} \tilde u \left(z, t + \tilde \tau+ C_1 \left( 4 C_2 \right)^{2-p} \tilde r^p\right) \,,
	\end{equation*}
	provided that
	\begin{equation*}
		t + \tilde \tau+ C_1 \left( 4 C_2 \right)^{2-p} \tilde r^p < T\,.
	\end{equation*}
	Choosing $C_4 = 16^{-p} C_1$ and $C_5 = 4C_2$, we always find $\tilde r \leq r/16$ such that
	\begin{equation*}
		t + \tilde \tau+ C_1 \left( 4 C_2 \right)^{2-p} \tilde r^p = s\,,
	\end{equation*}
	and hence
	\begin{equation*}
		2 \leq \inf_{z \in B_{2 \tilde r}(y)} \tilde u \left(z, s \right) \,.
	\end{equation*}
	This gives a contradiction since we assumed that $u(y,s)=1$, and thus the proof is complete.
\end{proof}

The following theorem is the corresponding result for weak solutions, where we use pointwise information in the past instead of information in mean. The main difference this imposes on the assumptions in \Cref{NTA BChainMeas} is that we have to require the solution to have lived for a certain amount of time, which is precisely the price we have to pay if we wish to control pointwise values in the past.

\begin{theorem}
	\label{NTA BChain} Let $\Omega\subset\R^n$ be an NTA-domain with with constants $M$ and $r_0$, let $x_0 \in \partial \Omega$, $T>0$, and let $0 < r < r_0$. Let $x,y$ be two points in $\Omega \cap B_r(x_0)$ such that
	\begin{equation*}
		\varrho := d(x,\partial \Omega) \leq r \qquad \mbox{and} \qquad d(y,\partial \Omega) \geq \frac{r}{4}\,.
	\end{equation*}
	Assume that $u$ is a non-negative $p$-parabolic function in $\Omega_T$, and assume that $u( y , s)$ is positive. Let $\delta \in (0,1]$. Then there exist positive constants $C_i \equiv C_i(p,n)$ and $c_i \equiv c_i(p,n,M)$, $i\in\{4,5\}$, such that if $s<T$ and
	\begin{equation}
		\label{eq:NTA BChain t cond} \max\left\{ \left(\frac{c_4^{1/\delta}}{c_h} \left( \frac{r}{\varrho} \right)^{c_5/\delta} u(y,s) \right)^{2-p } (\delta \varrho)^p , s - \tau \right\}\, \leq\, t\, \leq\, s - \delta^{p-1} \tau\,,
	\end{equation}
	with
	\begin{equation*}
		\tau := C_4 \left[C_5 u(y,s) \right]^{2-p} r^p
	\end{equation*}
	then
	\begin{equation*}
		u(x,t) \leq c_4^{1/\delta} \left( \frac{r}{\varrho} \right)^{c_5/\delta} u(y,s)\,.
	\end{equation*}
	Furthermore, constants $c_i,C_i$, $i \in \{4,5\}$, are stable as $p \to 2^+$.
\end{theorem}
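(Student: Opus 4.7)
The plan is to deduce the pointwise backward Harnack chain from its averaged counterpart, Theorem~\ref{NTA BChainMeas}, by one application of the intrinsic Harnack inequality of Theorem~\ref{Har}, in complete analogy with how Theorem~\ref{NTA FChain} follows from Theorem~\ref{NTA FChainMeas}. I argue by contradiction, assuming
\[
u(x,t) > \Lambda^{\ast} := c_4^{1/\delta}\left(\frac{r}{\varrho}\right)^{c_5/\delta} u(y,s),
\]
with the constants $c_4,c_5$ depending only on $p,n,M$ to be fixed at the end. Invoking Theorem~\ref{Har} at $(x,t)$ with Harnack radius $\delta\varrho/4$ (so that $B_{\delta\varrho}(x) \subset B_{\varrho}(x) \subset \Omega$), and noting that the required intrinsic time room $\theta_u(\delta\varrho)^p$, with $\theta_u := (c_h/u(x,t))^{p-2}$, is under the contradiction hypothesis dominated by $(\Lambda^{\ast}/c_h)^{2-p}(\delta\varrho)^p$---its backward version being covered by the lower bound on $t$ in~\eqref{eq:NTA BChain t cond}, its forward version by a $\delta$-uniform choice of $c_4$---I obtain
\[
\Lambda^{\ast}/C_h < u(x,t)/C_h \leq \inf_{z \in B_{\delta\varrho/4}(x)} u(z,t^{\ast}), \qquad t^{\ast} := t + \theta_u(\delta\varrho/4)^p.
\]
Averaging over $B_{\varrho/4}(x) \supset B_{\delta\varrho/4}(x)$ and using $u \geq 0$ yields $\fint_{B_{\varrho/4}(x)} u(z,t^{\ast})\,\de z \geq \delta^{n}\,\Lambda^{\ast}/C_h$.

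Next, I apply Theorem~\ref{NTA BChainMeas} at the shifted time $t^{\ast}$ with a slightly reduced parameter $\delta' := \delta/2^{1/(p-1)}$, which makes the admissible window $[s-\tau,\, s-\delta^{p-1}\tau/2]$; membership of $t^{\ast}$ in this window reduces to an estimate $\theta_u(\delta\varrho/4)^p \leq \delta^{p-1}\tau/2$ that, under the contradiction hypothesis and $r\geq\varrho$, is once more a $\delta$-uniform condition on $c_4$. The resulting upper bound on the average, combined with the lower bound just established, gives
\[
\delta^{n}\,c_4^{1/\delta}\left(\tfrac{r}{\varrho}\right)^{c_5/\delta}\!/C_h \,\leq\, \bar c_4^{\,1/\delta'}\left(\tfrac{r}{\varrho}\right)^{\bar c_5/\delta'},
\]
where $\bar c_4,\bar c_5$ are the constants furnished by Theorem~\ref{NTA BChainMeas}. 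Taking $c_5 := 2^{1/(p-1)}\bar c_5$ equalises the $(r/\varrho)$-powers, and the remaining numerical content, after raising to the $\delta$th power, reduces to $c_4 \leq (C_h/\delta^{n})^{\delta}\,\bar c_4^{\,2^{1/(p-1)}}$. Since $\delta^{-n\delta}\to 1$ as $\delta\to 0^{+}$ and $C_h^{\delta}$ is bounded on $(0,1]$, the right hand side is bounded above by a constant depending only on $p,n,M$; choosing $c_4$ strictly larger than this universal upper bound produces the desired contradiction.

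The main obstacle will be maintaining $\delta$-uniform control of the constants $c_4,c_5$ through the repeated $(\cdot)^{1/\delta}$ scalings: each of the three substeps (verification of the Harnack time conditions, membership of $t^{\ast}$ in the admissible window for Theorem~\ref{NTA BChainMeas}, and the final numerical contradiction) imposes constraints of the form $c_4^{\,(p-2)/\delta}\gtrsim K(p,n,M)\,\delta$, which must be simultaneously satisfiable with a single $c_4$ depending only on $p,n,M$. The two enabling facts, already present in the proof of Theorem~\ref{NTA BChainMeas}, are the uniform boundedness of $\delta^{-n\delta}$ on $(0,1]$ and the hypothesis $r\geq\varrho$, which together allow each loss (including the volume factor $\delta^{n}$ and the replacement $\delta \to \delta'$) to be absorbed by the favourable enlargement $c_5 = 2^{1/(p-1)}\bar c_5$.
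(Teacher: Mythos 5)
Your plan is genuinely different from the paper's: the paper does \emph{not} invoke \Cref{NTA BChainMeas} as a black box. Instead, it re-runs the contradiction argument from the proof of \Cref{NTA BChainMeas} internally, replacing the averaged forward chain (\Cref{NTA FChainMeas}) by the pointwise forward chain (\Cref{NTA FChain}); the lower bound $t\geq\bigl(\Lambda^\ast/c_h\bigr)^{2-p}(\delta\varrho)^p$ in~\cref{eq:NTA BChain t cond} is exactly what replaces the hypothesis $s\in(\tau,T)$ of \Cref{NTA BChainMeas} when one switches to \Cref{NTA FChain}. Your alternative --- one intrinsic Harnack step at $(x,t)$, then average, then feed the average into \Cref{NTA BChainMeas} --- is arguably cleaner, and your bookkeeping of the Harnack time window (via the contradiction hypothesis and $r\geq\varrho$) and of the final numerical inequality (absorbing the volume factor $\delta^n$ and the $\delta\to\delta'$ replacement into $c_5=2^{1/(p-1)}\bar c_5$, and using the boundedness of $\delta^{-n\delta}$) is correct as far as it goes.

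However, there is a genuine gap in the reduction to \Cref{NTA BChainMeas}: that theorem requires $s\in(\tau,T)$, and you never verify $s>\tau$. The hypotheses of \Cref{NTA BChain} do \emph{not} imply $s>\tau$; the lower bound in~\cref{eq:NTA BChain t cond} is a $\max$ precisely so that the case $s-\tau\leq 0$ (i.e.\ $s\leq\tau$) is admitted, with $t$ bounded below instead by the intrinsic waiting time for the backward Harnack step. One can check directly that for, say, $\delta$ close to $1$ and $\bigl(\Lambda^\ast/c_h\bigr)^{2-p}(\delta\varrho)^p<(1-\delta^{p-1})\tau$, the admissible $t$-interval is nonempty while $s<\tau$, so this regime is not vacuous. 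In this regime you are applying \Cref{NTA BChainMeas} outside its stated hypotheses. The fix is modest but must be stated: either observe that the proof of \Cref{NTA BChainMeas} only uses $s>\tau$ to guarantee $t>0$, so it extends to $t\in(\max\{0,s-\tau\},\,s-\delta^{p-1}\tau]$ (and your $t^\ast>t>0$ then suffices), or abandon the black-box reduction in this regime and argue as the paper does, running the forward chain \Cref{NTA FChain} directly from $(x,t)$, whose backward waiting-time condition is exactly what the first term of the $\max$ in~\cref{eq:NTA BChain t cond} supplies.
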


\begin{proof}
	To prove the lemma we follow the same outline as the proof of \Cref{NTA BChainMeas}, but instead of assuming the contrary assumption \Cref{eq:bh_counterMeas} we assume the contrary assumption that
	\begin{equation*}
		u(x,t) > H \left( \frac{r}{\varrho} \right)^{c_5/\delta}\,,
	\end{equation*}
	for some constants $c_5,H$ to be fixed. Applying \Cref{NTA FChain} instead of \Cref{NTA FChainMeas} we get
	\begin{equation*}
		u(x,t) \leq c_1^{1/\tilde \delta} \left( \frac{r}{\varrho} \right)^{c_3/\tilde \delta} \inf_{z \in B_{r/16}(y)} u(z,t + \tilde \tau) \,.
	\end{equation*}
	Note that it is the usage of \Cref{NTA FChain} which requires the assumption \Cref{eq:NTA BChain t cond}. The proof now follows repeating the remaining part of the proof of \Cref{NTA BChainMeas} essentially verbatim.
\end{proof}


\section{Carleson estimate} \label{s.Carleson}

In this section we  prove, using the improved Harnack chain estimate in \Cref{NTA BChain}, a flexible Carleson estimate valid in cylindrical NTA-domains. Versions of  the Carleson estimate was originally proved, for equations of $p$-parabolic type in~\cite{AGS} in Lipschitz cylinders, and in \cite{A} in certain time-dependent space-time domains. We begin with a standard oscillation decay lemma valid for weak subsolutions, see for example \cite{DB}. 

\begin{lemma}
	\label{lemoscdecay} Let $\Omega \subset \R^n$ be an NTA-domain with constants $M$ and $r_0$. Let $u$ be a non-negative, continuous weak subsolution in $ \Omega_T$. Let $(x_0,t_0) \in S_T$, $0 < r < r_0$,
	\begin{equation*}
		Q_r^\lambda(x_0,t_0) := (B_r(x_0) \cap \Omega) \times (t_0 - \lambda^{2-p} r^p, t_0) \subset \Omega_T\,,
	\end{equation*}
	and assume that $u$ vanishes continuously on $S_T \cap Q_r^\lambda(x_0,t_0)$ and that
	\begin{equation*}
		 \sup_{Q_r^\lambda(x_0,t_0)} u \leq \lambda\,.
	\end{equation*}
	Then there is a constant $\sigma$ depending only on $p,n,M$ such that $Q_{\sigma r}^{\lambda/2}(x_0,t_0) \subset Q_r^\lambda(x_0,t_0) $ and
	\begin{equation*}
		\label{eq:osc red 1} \sup_{Q_{ \sigma r}^{ \lambda /2 }(x_0,t_0) } u \leq \frac{\lambda}{2}\,.
	\end{equation*}
	In particular, we have that
	\begin{equation*}
		\label{eq:osc red 2} \sup_{Q_{ \sigma^j r}^{ 2^{-j} \lambda }(x_0,t_0) } u \leq 2^{-j}\lambda\,.
	\end{equation*}
	for any $j \in \mathbb{N}$.
\end{lemma}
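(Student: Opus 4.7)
The plan has three ingredients: (i) extension of $u$ by zero across the lateral boundary as a subsolution, (ii) the exterior density bound supplied by the NTA corkscrew condition, and (iii) the intrinsic De Giorgi oscillation reduction for non-negative $p$-parabolic subsolutions.

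First I would extend $u$ by zero to the full cylinder $\tilde Q := B_r(x_0) \times (t_0 - \lambda^{2-p}r^p, t_0)$. Since $u$ is continuous and vanishes on $S_T \cap Q_r^\lambda(x_0,t_0)$, a standard argument (decomposing any non-negative test function supported near the lateral boundary into its restriction to $\Omega_T$, which contributes a non-positive integral, and its restriction to the complement, which contributes nothing) shows that this extension is a non-negative continuous weak subsolution of \Cref{Hu} in $\tilde Q$, and of course $\sup_{\tilde Q} u \leq \lambda$. The exterior corkscrew condition \cref{NTA2} in \Cref{NTA} furnishes a ball of radius $M^{-1}r$ inside $B_r(x_0) \setminus \Omega$, whence $|B_r(x_0) \setminus \Omega| \geq c(M)|B_r(x_0)|$. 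This density persists on every time slice in the form $|\{u(\cdot,t) = 0\} \cap B_r(x_0)| \geq c(M)|B_r(x_0)|$ for all $t \in (t_0 - \lambda^{2-p}r^p, t_0)$.

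With this uniform measure-density in hand, I would invoke the intrinsic De Giorgi oscillation reduction for non-negative subsolutions (see, e.g., \cite{DB}): working with the truncations $(u - \lambda/2)_+$ inside the $\lambda$-intrinsic cylinder, whose time length $\lambda^{2-p}r^p$ absorbs the $(p-2)$-inhomogeneity, one obtains a quantitative $\sigma \equiv \sigma(p,n,M) \in (0,1)$ such that $\sup u \leq \lambda/2$ on the concentric intrinsic cylinder $B_{\sigma r}(x_0) \times (t_0 - (\lambda/2)^{2-p}(\sigma r)^p, t_0)$. I would also take $\sigma$ small enough that $\sigma^p \leq 2^{2-p}$, which guarantees $(\lambda/2)^{2-p}(\sigma r)^p \leq \lambda^{2-p} r^p$ and hence the stated nested containment $Q_{\sigma r}^{\lambda/2}(x_0,t_0) \subset Q_r^\lambda(x_0,t_0)$.

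The iterated estimate then follows by a straightforward induction on $j$: assuming $\sup_{Q_{\sigma^j r}^{2^{-j}\lambda}} u \leq 2^{-j}\lambda$, apply the single-step result with $r$ replaced by $\sigma^j r$ and $\lambda$ by $2^{-j}\lambda$; the hypotheses of non-negativity, continuity, subsolution property and vanishing on the lateral boundary all pass to the smaller cylinder without change. The main technical obstacle is really the intrinsic De Giorgi lemma at the boundary, which is however classical in the $p$-parabolic theory; the genuine task here is simply to verify that the NTA exterior corkscrew condition delivers exactly the uniform measure-density hypothesis required to push that lemma up to the lateral boundary.
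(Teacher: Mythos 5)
Your proposal is correct and follows the route the paper has in mind: the paper gives no proof of this lemma, instead citing \cite{DB}, and your sketch---extension by zero across the lateral boundary to obtain a subsolution on the full cylinder, exterior measure density from the NTA corkscrew condition \cref{NTA2}, and DiBenedetto's intrinsic De~Giorgi boundary oscillation reduction in the $\lambda$-intrinsic cylinder, with $\sigma$ shrunk so that $\sigma^p \le 2^{2-p}$ to guarantee nesting---is precisely the standard argument that reference supplies. Two small points you may want to tighten when writing this up: the exterior corkscrew ball at scale $r$ need not lie entirely inside $B_r(x_0)$, so one should take the corkscrew at scale $r/2$ (which only affects the density constant $c(M,n)$); and a single pass through the De~Giorgi alternatives typically shaves the oscillation by a factor $1-2^{-s_0}$ for some $s_0\ge 1$ rather than exactly $1/2$, so one iterates a bounded, $(p,n,M)$-dependent number of times, all of which is absorbed into the final choice of $\sigma$.
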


The following theorem is usually referred to as a Carleson estimate. We want to point out that, compared to~\cite{AGS}, not only does it hold for cylindrical NTA-domains, but also the formulation is more flexible for applications. In particular, we are able to adjust the waiting time, the height of the cylinder, and the distance to the initial boundary. All these parameters influence the constant in the inequality and a Gaussian type behavior is proved to be present.

\begin{theorem}
	\label{carleson:thm1} Let $\Omega \subset \R^n$ be an NTA-domain with constants $M$ and $r_0$. Let $u$ be a non-negative, weak solution in $\Omega_T$. Let $(x,t) \in S_T$ and $0 < r < r_0$. Assume that $u(a_r(x),t) >0$ and let
	\begin{equation*}
		\tau = \frac{C_4}{4} \left[C_5 u(a_r(x),t) \right]^{2-p} r^p \,,
	\end{equation*}
	where $C_4$ and $C_5$, both depending on $p,n$, are as in~\Cref{NTA BChain}. Assume that $t > (\delta_1^{p-1}+\delta_2^{p-1} + 2\delta_3^{p-1}) \tau$ for $0< \delta_1\leq \delta_3 \leq 1$, $\delta_2 \in (0,1)$ and that for a given $\lambda \geq 0$, the function $(u - \lambda)_+$ vanishes continuously on $S_T \cap B_r(x) \times (t-(\delta_1^{p-1} + \delta_2^{p-1}+\delta_3^{p-1}) \tau, t-\delta_1^{p-1} \tau)$ from $\Omega_T$. Then there exist constants $c_i \equiv c_i(M,p,n)$, $i\in \{6,7\}$, such that
	\begin{equation*}
		\sup_{Q} u \leq \left( c_6 / \delta_3 \right)^{c_7/ \delta_1} u(a_r(x),t) + \lambda\,,
	\end{equation*}
	where $Q := B_r(x) \times ( t - (\delta_1^{p-2}+\delta_2^{p-1})\tau, t - \delta_1^{p-1} \tau)$. Furthermore, constants $c_i$, $i \in \{6,7\}$, are stable as $p \to 2^+$
\end{theorem}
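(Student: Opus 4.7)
The plan is a contradiction argument that iterates the boundary oscillation decay of \Cref{lemoscdecay} until the pointwise backward Harnack chain of \Cref{NTA BChain} can be invoked. Since $u-\lambda$ solves \Cref{Hu}, the function $v := (u-\lambda)_+$ is a non-negative continuous weak subsolution of \Cref{Hu} vanishing continuously on $S_T \cap B_r(x) \times (t-(\delta_1^{p-1}+\delta_2^{p-1}+\delta_3^{p-1})\tau,\, t-\delta_1^{p-1}\tau)$; because $u \leq v + \lambda$, it suffices to show $\sup_Q v \leq K u(a_r(x),t)$ with $K := (c_6/\delta_3)^{c_7/\delta_1}$. Assume for contradiction this fails and choose $(y_0,s_0) \in \overline{Q}$ with $v(y_0,s_0) \geq \Lambda_0 := K u(a_r(x),t)/2$.

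Let $\sigma \in (0,1)$ be the constant from \Cref{lemoscdecay}. I would iteratively build a sequence $(y_j,s_j)$ with $v(y_j,s_j) \geq V_j := 2^j \Lambda_0$, writing $\rho_j := d(y_j,\partial\Omega)$. While $\rho_j < r/4$, pick $x_j^* \in \partial\Omega$ nearest to $y_j$, set $R_j := \rho_j/\sigma$, and choose $\bar{s}_j$ so that $(y_j,s_j)$ lies in the interior of the sub-cylinder $Q_{\sigma R_j}^{V_j}(x_j^*, \bar{s}_j)$. The contrapositive of \Cref{lemoscdecay} applied to $v$ on the enlarged $Q_{R_j}^{2V_j}(x_j^*, \bar{s}_j)$, which still lies inside the vanishing strip for $v$, produces a point $(y_{j+1},s_{j+1})$ with $v(y_{j+1},s_{j+1}) > 2V_j = V_{j+1}$ and $\rho_{j+1} \leq R_j = \rho_j/\sigma$. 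Because $p>2$, the cumulative intrinsic time consumed is dominated by the final term $V_{j_\ast}^{2-p} R_{j_\ast}^p$, of order $K^{2-p}\tau$, which fits inside the buffer $\delta_3^{p-1}\tau$ below $Q$ provided $K$ dominates a constant multiple of $\delta_3^{-(p-1)/(p-2)}$; this lower bound is absorbed into the final choice of $K$. The procedure therefore terminates at some index $j_\ast \leq C \log_{1/\sigma}(r/\rho_0)$ with $\rho_{j_\ast} \geq r/4$.

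At the terminal point I would apply \Cref{NTA BChain} to the $p$-parabolic function $u$ with $\delta = \delta_1/4^{1/(p-1)}$. The time gap $t - s_{j_\ast}$ lies in $[\delta_1^{p-1}\tau, 3\tau]$, and since the parameter $\tau$ in the present statement is exactly one-quarter of the $\tau$ in \Cref{NTA BChain}, the admissibility window of the backward chain at $s=t$ with $\tau_{BH}=4\tau$ contains this interval; the lifetime condition \cref{eq:NTA BChain t cond} is automatic from $\rho_{j_\ast} \geq r/4$. The backward Harnack chain therefore gives
\[
2^{j_\ast}\Lambda_0 \leq v(y_{j_\ast},s_{j_\ast}) \leq u(y_{j_\ast},s_{j_\ast}) \leq c_4^{1/\delta}\, 4^{c_5/\delta}\, u(a_r(x),t),
\]
which upon substituting $\Lambda_0 = Ku(a_r(x),t)/2$ forces $K \leq C^{1/\delta_1}$. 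Combining this with the lower bound $K \geq C\,\delta_3^{-(p-1)/(p-2)}$ and choosing $c_6, c_7$ sufficiently large yields a bound incompatible with $K = (c_6/\delta_3)^{c_7/\delta_1}$, delivering the contradiction.

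The main obstacle is the coupled time bookkeeping. One must simultaneously verify that the total intrinsic time consumed by the oscillation-decay iteration fits in the buffer $\delta_3^{p-1}\tau$, that the time gap between the terminal point and $(a_r(x),t)$ lies in the \Cref{NTA BChain}-admissibility window tuned to $\delta$ comparable to $\delta_1$, and that the enlarged cylinders $Q_{R_j}^{2V_j}(x_j^*, \bar{s}_j)$ remain inside the strip on which $v$ vanishes on $\partial\Omega$. The Gaussian-type exponent $c_7/\delta_1$ is dictated by the $1/\delta$ dependence in \Cref{NTA BChain}, while the base $c_6/\delta_3$ emerges from the buffer constraint; producing both simultaneously in a $p$-stable way is the technical crux.
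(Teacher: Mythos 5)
Your overall skeleton (scale, contradiction, iterate \Cref{lemoscdecay}, inject \Cref{NTA BChain}) is the same as the paper's, but there is a real gap in how the iteration terminates, and this inverts the actual mechanism of the proof.

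The claim that the iteration exits with $\rho_{j_\ast} \geq r/4$ after at most $C\log_{1/\sigma}(r/\rho_0)$ steps is unjustified. \Cref{lemoscdecay} only locates the new point $(y_{j+1},s_{j+1})$ somewhere in the enlarged cylinder $Q_{R_j}^{2V_j}(x_j^*,\bar s_j)$, so you get only the one-sided bound $\rho_{j+1}\leq \rho_j/\sigma$; nothing prevents the distance from shrinking at every step, in which case your ``while $\rho_j < r/4$'' loop never terminates and your terminal application of \Cref{NTA BChain} never fires. In fact this is exactly what does happen, and the paper exploits it rather than avoids it: it first uses \Cref{NTA BChain} to obtain the a priori bound $u(y,s)\leq c_8^{1/\nu}(r/d(y,\partial\Omega))^{c_9/\nu}$ on the whole strip (their \cref{eq:carleson_bh1}), and then the induction hypothesis $u(P_j)>2^jH+\lambda$ \emph{forces} $r_j = d(x_j,\partial\Omega)\leq (2^j H c_8^{-1/\hat\delta_j})^{-\hat\delta_j/c_9}$. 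So the distances necessarily shrink geometrically, the cumulative intrinsic time $\sum_j (2^{j+1}H)^{2-p}(r_j/\sigma)^p$ is summable and fits in the $\delta_3^{p-1}\tau$ buffer, and the contradiction is obtained because the $P_j$ accumulate on the lateral boundary with $u(P_j)\to\infty$, violating the assumed continuity of $(u-\lambda)_+$ on $S_T$. You never invoke the accumulation-versus-continuity contradiction, and you never use \cref{eq:carleson_bh1} to control the $r_j$, which is also what makes the time-bookkeeping and the exponent $c_7/\delta_1$ emerge; without it, your claim that the total time consumed is ``of order $K^{2-p}\tau$'' has no support, since $K^{2-p}$ alone does not compensate for potentially many summands.

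A secondary inconsistency: you want both $K \leq C^{1/\delta_1}$ (from the supposed terminal backward-chain step) and $K \geq C\delta_3^{-(p-1)/(p-2)}$ (from the buffer constraint), and then ``choose $c_6,c_7$ large'' to contradict $K=(c_6/\delta_3)^{c_7/\delta_1}$; but a lower bound on $K$ plus an upper bound on $K$ does not produce a contradiction with a particular \emph{choice} of $K$ unless those bounds are incompatible with each other, which you have not shown. In the paper, $H$ is fixed once so that the time budget closes, and the contradiction is obtained directly from continuity, not from an algebraic squeeze on $K$.
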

\begin{remark}
	Note that in the case $p=2$, \Cref{carleson:thm1} for $\lambda = 0$ is equivalent to the estimate above by linearity. However, for $p > 2$ this result extends the ones in \cite{A,AGS} if $\lambda > 0$.
\end{remark}
\begin{proof}
	By scaling the function $u$ we can assume that $u(a_r(x),t) = 1$, and replacing $\lambda$ with its scaled version. Consider the boxes
	\begin{equation*}
		\label{eq:carlbox1} \widetilde Q_\nu := \big(\Omega \cap B_{r}(x)\big) \times (t - (\delta_1^{p-1}+\delta_2^{p-1} + \delta_3^{p-1}) \tau, t - \nu^{p-1} \tau)\,, \qquad \nu \in \{\delta_1, \delta_3\}\,,
	\end{equation*}
	and denote also
	\begin{equation*}
		Q_r^\theta(x,t) := (\Omega \cap B_r(x)) \times (t - \theta^{2-p} r^p,t)\,, \qquad \theta>0\,.
	\end{equation*}
	Observe that with the choice of $\tau$, we have by~\Cref{NTA BChain}, for any $(y,s) \in \widetilde Q_\nu$ with $d(y,\partial \Omega) \leq r$ and $\nu \in \{\delta_1, \delta_3\}$, that
	\begin{equation}
		\label{eq:carleson_bh1} u(y,s) \leq c_8^{1/\nu} \left( \frac{r}{d(y,\partial \Omega)} \right)^{c_9/\nu}
	\end{equation}
	holds with $c_8,c_9$ depending only on $p,n,M$ (apply~\Cref{NTA BChain} with the choice $\delta := \nu \min\{1,C_4/2\}$ in order to guarantee the conditions in~\cref{eq:NTA BChain t cond}).
	
	We proceed by induction via a contradiction assumption. Assume that $P_0 = (x_0,t_0) \in Q $, where $Q$ is as in the statement, is such that $u(P_0) > H + \lambda$ for some large $H$ to be fixed. Assume then that we find inductively points $P_j = (x_j,t_j) \in \widetilde Q_{\hat \delta_j}$, where $\hat \delta_j = \delta_3$ if $t_j \leq t_0 - \delta_3^{p-1} \tau/2$ and $\hat \delta_j = \delta_1$ if $t_j > t_0 - \delta_3^{p-1} \tau/2$ for any $j \in \mathbb{N}$. Set $r_j := d(x_j,\partial \Omega)$ and let $x_j' \in \partial \Omega$ be such that $r_j = |x_j-x_j'|$. Assume inductively that
	\begin{equation}
		\label{eq:indass}
		\begin{cases}
			u(P_j) \,>\, 2^j H + \lambda \quad \text{and} \\
			t - (\delta_1^{p-1}+\delta_2^{p-1}+\delta_3^{p-1}) \tau\, <\, t_{j} \,\leq\, t_{j-1} \,\leq\, t_{j} + (2^j H)^{2-p} (r_{j-1} /\sigma)^p
		\end{cases}
	\end{equation}
	holds for all $j \in \{1,\ldots,k\}$, where $\sigma \equiv \sigma(p,n,M) \in (0,1)$ is as in~\Cref{lemoscdecay}. We then want to show that for large enough $H$ this continues to hold for $j=k+1$ as well.
	
	To show the induction step, observe that~\cref{eq:carleson_bh1} and the induction assumption~\Cref{eq:indass} imply that
	\begin{equation*}
		2^jH + \lambda < u(P_j) \leq c_8^{1/\hat \delta_j } \left( \frac{r}{r_j} \right)^{c_9/\hat \delta_j} \qquad \Longrightarrow \qquad r_j \leq \left(2^j H c_8^{- 1/\hat \delta_j } \right)^{- \hat \delta_j / c_9} \,.
	\end{equation*}
	Fixing
	\begin{equation*}
		H := \left( \frac{ 4 c_8^{p/c_9} c_9 }{p \sigma^p \log 2 } \frac{\tau}{r^p \delta_3^p} \right)^{c_9/[p \delta_1]} =: \left( c_6 / \delta_3 \right)^{c_7/ \delta_1}\,,
	\end{equation*}
	we have after simple manipulations that
	\begin{equation}
		\label{eq:r_j cond 111} r_j^p \leq \sigma^p \frac{\delta_3^{p-1} \tau}{4} 2^{- j \hat \delta_j p / c_9} \frac{\delta_3 p \log 2 }{c_9 } \leq \sigma^p \frac{\delta_3^{p-1} \tau}{4} \frac{2^{- j \hat \delta_j p/ c_9}}{\sum_{j=0}^\infty 2^{-j \delta_3 p / c_9} }\,.
	\end{equation}
	In particular,
	\begin{equation*}
		(2^{k+1} H)^{2-p} (r_k/\sigma)^p \leq (r_k/\sigma)^p \leq \frac{\delta_3^{p-1} \tau}{4}\,.
	\end{equation*}
	We hence set $K_1 := Q_{r_k}^{2^k H}(x_k',t_k)$ and $K_2 := Q_{r_k/\sigma}^{2^{k+1}H}(x_k',t_k)$, and we deduce, by the induction assumption and the estimate in previous display, that $K_2 \subset \Omega_T$. Now, if
	\begin{equation*}
		\sup_{K_2} (u-\lambda)_+ \leq 2^{k+1}H\,,
	\end{equation*}
	then, using that  $(u-\lambda)_+$ is a weak subsolution,~\Cref{lemoscdecay} would imply that
	\begin{equation*}
		\sup_{K_1} u \leq 2^k H + \lambda\,,
	\end{equation*}
	which is a contradiction since $P_k \in \overline{K}_1$. Thus there is $P_{k+1} \in K_2$ such that
	\begin{equation*}
		\label{eq:carlfinal1} u(P_{k+1}) > 2^{k+1}H + \lambda\,.
	\end{equation*}
	By the definition of $K_2$ and $P_{k+1}$, we must have that
	\begin{equation*}
		t_{k+1} \leq t_{k} \leq t_{k+1} + (2^{k+1} H)^{2-p} (r_k/\sigma)^p \,.
	\end{equation*}
	Therefore we are left to show that $t_{k+1} \geq t - (\delta_1^{p-1}+ \delta_2^{p-1}+\delta_3^{p-1}) \tau$ in order to prove the induction step.
	
	To this end, let now $\hat k \leq k+1$ be the largest integer such that $t_0-t_{\hat k} \leq \delta_3^{p-1} \tau/2 $. We may without loss of generality assume that $\hat k < k+1$, since otherwise $t_{k+1} \geq t-(\delta_1^{p-1}+\delta_2^{p-1}+\delta_3^{p-1})\tau$, because $t_0 > t - (\delta_1^{p-1} + \delta_2^{p-1}) \tau$. Now~\cref{eq:r_j cond 111,eq:indass}, together with the fact that $\hat \delta_j = \delta_3$ for $j > \hat k$, give
	\begin{align}
		\label{eq:tsum} \nonumber t_0 -t_{k+1} =& (t_0 - t_{\hat k}) + (t_{\hat k}-t_{k+1}) \\
		\leq & \nonumber \frac{\delta_3^{p-1}\tau }{2} + \sum_{j=\hat k}^{k} (t_{j} - t_{j+1}) \\
		\leq & \nonumber \frac{\delta_3^{p-1} \tau }{2} + (2^{\hat k+1} H)^{2-p} (r_{\hat k}/\sigma)^p + \sum_{j=\hat k+1}^{k} (2^{j+1} H)^{2-p} (r_{j} /\sigma)^p \\
		\leq & \nonumber \frac{\delta_3^{p-1} \tau }{2} + \frac{\delta_3^{p-1} \tau }{4} + \frac{\delta_3^{p-1} \tau }{4} \left[ \sum_{j=0}^\infty 2^{-j \delta_3 p/ c_9} \right]^{-1} \sum_{j=\hat k+1}^{k} 2^{-j \delta_3 p/ c_9} \\
		< & \nonumber \delta_3^{p-1} \tau \,.
	\end{align}
	Therefore, since $t_0> t - (\delta_1^{p-1}+\delta_2^{p-1})\tau$, we have that
	\begin{equation*}
		 t - t_{k+1} = t - t_0 + t_0 -t_k < (\delta_1^{p-1}+\delta_2^{p-1}+\delta_3^{p-1}) \tau \,,
	\end{equation*}
	which was to be proven. Hence we have concluded the proof of the induction step. As a consequence, we have constructed a sequence of points $P_j = (x_j,t_j) \in \widetilde Q_{\delta_1}$,  such that $d(x_j,\partial \Omega) \to 0$ and $u(P_j) \to \infty$ as $j \to \infty$. This violates the assumed continuity of $(u-\lambda)_+$ in the neighborhood of $S_T \cap \overline{ \widetilde Q_{\delta_1}}$, giving the contradiction. Hence,  
	\begin{equation*}
		\sup_Q u \leq H + \lambda\,,
	\end{equation*}
	completing the proof of the theorem.
\end{proof}


\section{Estimating the boundary type Riesz measure}  \label{s.measure}
In this section we establish, in NTA-cylinders, upper and lower bounds for the measure $\mu$ defined in~\Cref{1.1+}. 


\subsection{Upper estimate on $\mu$}
We will first provide an upper bound on the measure. The proof relies on the Carleson estimate in \Cref{carleson:thm1} and the following standard Caccioppoli type estimate, see~\cite{DB}.
\begin{lemma}
	\label{lem:energy} Let $u$ be a non-negative weak subsolution in $\Omega_T$, and $\phi \in C_0^\infty(\Omega \times (t_1,T))$ with $\phi \geq 0$. Then
	\begin{equation*}
		\int_{t_1}^{t_2} \int_{\Omega} |\grad u|^p \phi^p \dx\dt \leq C \left ( \int_{t_1}^{t_2} \int_{\Omega} u^p |\grad \phi|^p \dx \dt + \int_{t_1}^{t_2}\int_{\Omega} u^2 (\phi_t)_+\phi^{p-1}\dx\dt \right )
	\end{equation*}
	for $C = C(p,n)$.
\end{lemma}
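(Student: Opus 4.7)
This is a standard Caccioppoli-type energy estimate; the only subtlety is to make rigorous sense of $\partial_t u$, which I would handle by a routine Steklov-averaging procedure. The plan is to test the weak subsolution inequality
\begin{equation*}
\int_{\Omega_T}\bigl(-u \partial_t \psi + |\nabla u|^{p-2}\nabla u \cdot \nabla \psi\bigr)\dx\dt \leq 0\,,\qquad \psi \geq 0\,,
\end{equation*}
with the nonnegative choice $\psi = u \phi^p$. Since $\psi$ is not a priori in $C_0^\infty$, this step is justified by the standard density/Steklov-average procedure: replace $u$ by its time-mollification $u_h$, test the resulting equation with $u_h \phi^p$, and pass to the limit $h\to 0$ as in~\cite{DB}.

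For the time integrand, using the identity $u\,\partial_t(u\phi^p) = \tfrac{1}{2}\phi^p \partial_t(u^2) + p\, u^2 \phi^{p-1}\phi_t$ and integrating $\tfrac{1}{2}\phi^p \partial_t(u^2)$ by parts in $t$ (boundary terms vanish as $\phi$ has compact support in $\Omega\times(t_1,T)$) gives
\begin{equation*}
-\int_{\Omega_T} u\,\partial_t(u\phi^p)\,\dx\dt \;=\; -\tfrac{p}{2}\int_{\Omega_T} u^2 \phi^{p-1} \phi_t \,\dx\dt\,.
\end{equation*}
For the gradient integrand, the product rule $\nabla(u\phi^p) = \phi^p \nabla u + p u \phi^{p-1}\nabla \phi$ yields
\begin{equation*}
\int_{\Omega_T} |\nabla u|^{p-2}\nabla u \cdot \nabla(u\phi^p)\,\dx\dt = \int_{\Omega_T}\phi^p |\nabla u|^p\,\dx\dt + p\int_{\Omega_T} u \phi^{p-1}|\nabla u|^{p-2}\nabla u\cdot\nabla \phi\,\dx\dt\,.
\end{equation*}

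Inserting these two identities into the subsolution inequality and rearranging, I would obtain
\begin{equation*}
\int_{\Omega_T} \phi^p |\nabla u|^p \,\dx\dt \;\leq\; \tfrac{p}{2}\int_{\Omega_T} u^2 \phi^{p-1}\phi_t\,\dx\dt + p\int_{\Omega_T} u \phi^{p-1}|\nabla u|^{p-1}|\nabla \phi|\,\dx\dt\,.
\end{equation*}
Applying Young's inequality with exponents $p/(p-1)$ and $p$ to the last integrand,
\begin{equation*}
u\phi^{p-1}|\nabla u|^{p-1}|\nabla \phi| \;\leq\; \varepsilon\,\phi^p|\nabla u|^p + C(\varepsilon,p)\,u^p |\nabla \phi|^p\,,
\end{equation*}
then choosing $\varepsilon$ small enough to absorb the first term on the left-hand side, and finally bounding $\phi_t \leq (\phi_t)_+$ on the right-hand side (which is legitimate because $u^2\phi^{p-1}\geq 0$), produces the stated inequality with a constant $C$ depending only on $p$.

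The only genuine obstacle is the Steklov-average justification of the test function $\psi=u\phi^p$; after that, the argument reduces to the product rule, a single integration by parts in time, and one application of Young's inequality. As this time-regularization routine is by now entirely standard for the $p$-parabolic equation, I would simply invoke~\cite{DB} rather than reproduce it in detail.
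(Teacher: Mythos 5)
Your proof is correct, and it is the standard Caccioppoli argument. Note that the paper itself does not give a proof of this lemma; it simply cites~\cite{DB}, so there is no in-paper argument to compare against --- your write-up supplies exactly what the reference contains. The identity $u\,\partial_t(u\phi^p)=\tfrac12\phi^p\partial_t(u^2)+p\,u^2\phi^{p-1}\phi_t$, the product rule on $\nabla(u\phi^p)$, Young's inequality with exponents $p/(p-1)$ and $p$, the absorption, and replacing $\phi_t$ by $(\phi_t)_+$ are all correct, and the constant you obtain depends only on $p$ (the paper's $C(p,n)$ is slightly over-stated). The deferral of the justification of $\psi=u\phi^p$ to Steklov averaging is the right call and is what~\cite{DB} does. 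One tiny remark: if the support of $\phi$ is not contained in $\Omega\times(t_1,t_2)$, the integration by parts in time produces an extra boundary term $-\tfrac12\int_\Omega u^2(\cdot,t_2)\phi^p(\cdot,t_2)\dx$, which is nonpositive on the side you want and hence can be dropped, so the inequality over $(t_1,t_2)$ still holds; it would be cleaner to say this explicitly rather than assert that boundary terms vanish.
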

\begin{theorem}
	\label{MuUpperBound} Let $\Omega \subset \R^n$ be an NTA-domain with constants $M$ and $r_0$. Let $0<r\leq r_0$ and let $u$ be a weak non-negative solution in $\Omega_T$. Fix a point $x_0 \in \partial \Omega$ and define
	\begin{equation*}
		\tau = \frac{C_4}{16} \left[C_5 u(a_r(x_0),t_0) \right]^{2-p} r^p \,,
	\end{equation*}
	where $C_4$ and $C_5$, both depending on $p,n$, are as in~\Cref{NTA BChain}. Let $0<\delta \leq \tilde \delta \leq 1$ and assume that $t_0 > 5 \tilde \delta^{p-1} \tau $ and that $u$ vanishes continuously on $S_T \cap \bigl (B_r(x) \times (t_0-4\tilde \delta^{p-1} \tau, t_0-\delta^{p-1} \tau)\bigr )$ from $\Omega_T$. Then there is a constant $C \equiv C(p,n)$ and $c_8 \equiv c_8(p,n,M)$ such that
	\begin{equation*}
		\frac{\mu(Q)}{r^n} \leq C \left( c_6 / \tilde \delta \right)^{c_8/ \delta} u(a_r(x_0),t_0)\,,
	\end{equation*}
	where  $\mu$ is the measure from~\cref{1.1+},
	\begin{equation*}
		Q := B_{r/2}(x_0) \times (t_0 - 2 \tilde \delta^{p-1} \tau , t_0 - \delta^{p-1} \tau) \,,
	\end{equation*}
	and $c_6$ is from \Cref{carleson:thm1}. Furthermore, constants $C$, $c_8$, are stable as $p \to 2^+$.
\end{theorem}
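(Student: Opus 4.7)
The plan is to test the defining identity \Cref{1.1+} against a non-negative cutoff $\phi$ satisfying $\phi\geq\chi_Q$, then invoke $\mu\geq 0$ to bound $\mu(Q)$ by integrals of $u$ and $|\nabla u|^{p-1}$ against derivatives of $\phi$, and finally dominate those integrals by combining the pointwise Carleson estimate \Cref{carleson:thm1} with the Caccioppoli inequality \Cref{lem:energy}. Throughout I write $u_0:=u(a_r(x_0),t_0)$ and $K:=(c_6/\tilde\delta)^{c_7/\delta}$, where $c_6,c_7$ are as in \Cref{carleson:thm1}.

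\textbf{Choice of cutoffs and testing.} I take $\phi(x,t)=\phi_1(x)\phi_2(t)$ with $\phi_1\in C_0^\infty(B_r(x_0))$, $\phi_1\equiv 1$ on $B_{r/2}(x_0)$ and $|\nabla\phi_1|\leq C/r$; and $\phi_2$ smooth with $\phi_2\equiv 1$ on $[t_0-2\tilde\delta^{p-1}\tau,\,t_0-\delta^{p-1}\tau]$ and compactly supported in $(t_0-3\tilde\delta^{p-1}\tau,\,t_0-\tfrac12\delta^{p-1}\tau)$, so that its two transitions have lengths comparable to $\tilde\delta^{p-1}\tau$ and $\delta^{p-1}\tau$ respectively. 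After extending $u$ by zero outside $\Omega_T$, the extension is a continuous non-negative weak subsolution in a neighborhood of $\supp\phi$, and since $\mu\geq 0$ and $\phi\geq\chi_Q$, inserting $\phi$ in \Cref{1.1+} yields
\[
\mu(Q)\leq\int u\,|\partial_t\phi|\dx\dt+\int|\nabla u|^{p-1}\,|\nabla\phi|\dx\dt.
\]

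\textbf{Time integral via Carleson.} I apply \Cref{carleson:thm1}, matching parameters by noting that our $\tau$ equals one quarter of the Carleson $\tau$, and that the safety factor in $t_0>5\tilde\delta^{p-1}\tau$ accommodates $\supp\phi$ together with a slight enlargement inside the Carleson region. This gives the pointwise bound $u\leq K u_0$ on $\supp\phi\cap\Omega_T$. Each transition of $\phi_2$ then contributes at most $CK u_0 r^n$, since the spacetime measure of each transition cancels the reciprocal in $|\phi_2'|$. Hence $\int u|\partial_t\phi|\dx\dt\leq CK u_0 r^n$.

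\textbf{Gradient term via H\"older and Caccioppoli.} For the gradient integral I introduce an auxiliary cutoff $\psi$ with $\psi\equiv 1$ on $\supp\nabla\phi$, compactly supported in a slight enlargement of $\supp\phi$ still contained in the region where Carleson gives $u\leq K u_0$. H\"older's inequality yields
\[
\int|\nabla u|^{p-1}|\nabla\phi|\dx\dt\leq\left(\int|\nabla u|^p\psi^p\dx\dt\right)^{(p-1)/p}\left(\int|\nabla\phi|^p\dx\dt\right)^{1/p}.
\]
Applying \Cref{lem:energy} to $u$ against $\psi$ and inserting $u\leq K u_0$ on $\supp\psi$, I obtain $\int u^p|\nabla\psi|^p\dx\dt\leq CK^p u_0^p r^{n-p}\tilde\delta^{p-1}\tau$ and $\int u^2(\psi_t)_+\psi^{p-1}\dx\dt\leq CK^2 u_0^2 r^n$; the intrinsic relation $\tau\sim u_0^{2-p}r^p$ makes both terms at most $CK^p u_0^2 r^n$. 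Combining with $\int|\nabla\phi|^p\dx\dt\leq Cr^n u_0^{2-p}\tilde\delta^{p-1}$ and multiplying out, the exponents of $u_0$, $r$, and $\tilde\delta$ in the product collapse to $1$, $n$, and $(p-1)/p$, giving
\[
\int|\nabla u|^{p-1}|\nabla\phi|\dx\dt\leq CK^{p-1} u_0 r^n.
\]

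\textbf{Conclusion and main obstacle.} Adding the two contributions gives $\mu(Q)/r^n\leq CK^{p-1} u_0$, which is the claim with $c_8=(p-1)c_7$. The main obstacle is the precise tracking of intrinsic exponents: verifying that the combined powers of $u_0$, $r$, $\tau$, and the cutoff scales conspire so that the final bound carries exactly one factor of $u_0$ and the right power of $K$, with no residual $\tilde\delta$ or $\delta$ blow-up beyond what is already in $K$. A secondary technical point is coordinating the Carleson region with the Caccioppoli cutoffs, which is the purpose of the safety margin in the hypothesis $t_0>5\tilde\delta^{p-1}\tau$.
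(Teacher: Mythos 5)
Your overall strategy (test the defining identity \cref{1.1+} against a cutoff, then bound the resulting integrals by combining the Carleson estimate \Cref{carleson:thm1} with the Caccioppoli inequality \Cref{lem:energy}) is exactly the paper's approach, and your exponent bookkeeping is correct: the $u_0$, $r$ and $K$ powers close up precisely as you claim, yielding $c_8=(p-1)c_7$.

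However, there is a genuine gap in your choice of temporal cutoff. You take $\phi_2$ compactly supported in $(t_0-3\tilde\delta^{p-1}\tau,\,t_0-\tfrac12\delta^{p-1}\tau)$, with its second transition occurring in the strip $(t_0-\delta^{p-1}\tau,\,t_0-\tfrac12\delta^{p-1}\tau)$. This strip lies strictly after the interval $(t_0-4\tilde\delta^{p-1}\tau,\,t_0-\delta^{p-1}\tau)$ on which the theorem's hypotheses guarantee that $u$ vanishes on $S_T$. In that strip you therefore have neither the Carleson bound $u\le K u_0$ (the conclusion of \Cref{carleson:thm1}, with the parameter choice $\delta_1=\delta$, stops precisely at $t_0-\delta^{p-1}\tau$) nor the fact that the zero extension of $u$ is a subsolution, so the identity \cref{1.1+} is not even available for test functions supported there — the measure $\mu$ is only defined on the open set where the vanishing condition holds. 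Your estimate ``each transition of $\phi_2$ contributes at most $CKu_0 r^n$'' is thus unjustified for the second transition, and the auxiliary Caccioppoli cutoff $\psi$ suffers the same defect since its support must enclose $\supp\nabla\phi$.

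The fix is the one the paper uses: do not push the cutoff past $t_0-\delta^{p-1}\tau$. Take instead $\phi$ vanishing on $\partial_p\hat Q$ where $\hat Q=B_r(x_0)\times(t_0-3\tilde\delta^{p-1}\tau,\,t_0-\delta^{p-1}\tau)$, with $\phi\equiv1$ up to the top $t=t_0-\delta^{p-1}\tau$ rather than decaying. This is legitimate by the standard one-sided parabolic approximation: truncating $\phi$ near the top produces an extra term with the sign of $-u\phi$, which is non-positive and so can be dropped when bounding $\mu(Q)$ from above. In that setup only $(\phi_t)_+$ appears, the second transition disappears entirely, and the support of every cutoff stays inside the Carleson region and the vanishing set. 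With that change, your argument matches the paper's proof.
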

\begin{proof}
	After scaling, we may assume that $u(a_r(x_0),t_0)= 1$. Let
	\begin{equation*}
		\hat Q = B_r(x_0) \times (t_0 - 3 \tilde \delta^{p-1} \tau , t_0 - \delta^{p-1} \tau)\,,
	\end{equation*}
	and observe, by our assumptions, that \Cref{carleson:thm1} implies
	\begin{equation}
		\label{eq:uprbndruprest} \sup_{\hat Q} u \leq \left( c_6 / \tilde \delta \right)^{c_7/ \delta} =: \Lambda\,.
	\end{equation}
As in the construction of the measure $\mu$ in \Cref{1.1+}, we see that extending $u$ to the entire cylinder $\hat Q$ as zero, we obtain a weak subsolution in $\hat Q$.	
	Take a cut-off function $\phi \in C^\infty(\hat Q)$ vanishing on $\partial_p \hat Q$ such that $0 \leq \phi \leq 1$, $\phi$ is one on $Q$, and $|\grad \phi| < 4/r$ and $(\phi_t)_+ < 4/[\tilde \delta^{p-1} \tau]$. Then by~\cref{1.1+}, the definition of~$\phi$ and H{\"o}lder's inequality we get
	\begin{align*}
		\int_{\hat Q} \phi^p \de \mu &\leq \int_{\hat Q} |\grad u|^{p-1} |\grad \phi| \phi^{p-1} \dx \dt + \int_{\hat Q} u (\phi_t)_+ \phi^{p-1}\dx \dt \\
		&\leq \frac{4}{r} \int_{\hat Q} |\grad u|^{p-1} \phi^{p-1} \dx \dt + \int_{\hat Q} u (\phi_t)_+ \phi^{p-1} \dx\dt \\
		&\leq \frac{4}{r} |\hat Q|^{1/p} \left ( \int_{\hat Q} |\grad u|^p \phi^p \dx \dt \right )^{\frac{p-1}{p}} + \int_{\hat Q} u (\phi_t)_+ \phi^{p-1} \dx \dt \,.
	\end{align*}
	Now using \Cref{lem:energy} and~\cref{eq:uprbndruprest} we see that
	\begin{align*}
		\mu(Q) &\leq C \frac{|\hat Q|^{1/p}}{r} \left ( \int_{\hat Q} u^p |\grad \phi|^p + u^2 (\phi_t)_+ \phi^{p-1} \dx \dt \right )^{\frac{p-1}{p}} + \int_{\hat Q} u (\phi_t)_+ \phi^{p-1} \dx \dt \,, \\
		&\leq C \frac{|\hat Q|^{1/p}}{r} \left (|\hat Q| \left (\frac{\Lambda^p}{r^{p}} + \frac{\Lambda^2}{\tilde \delta^{p-1} \tau } \right ) \right )^{\frac{p-1}{p}} + C |\hat Q| \frac{\Lambda}{\tilde \delta^{p-1} \tau } \\
		&\leq C \frac{|\hat Q|}{\tilde \delta^{p-1} r^p} \Lambda^{p-1}\,.
	\end{align*}
	After scaling back, this can be rewritten in the homogeneous form
	\begin{equation*}
		\frac{\mu(Q_r)}{r^n} \leq C \left( c_6 / \tilde \delta \right)^{(p-1)c_7/ \delta} u(a_r(x_0),t_0)\,,
	\end{equation*}
	completing the proof with $c_8 = (p-1)c_7$.
\end{proof}


\subsection{Lower estimate on $\mu$}

We next prove the lower bound for the measure~$\mu$.

\begin{theorem}
	\label{MuLowerBound} Let $\Omega \subset \R^n$ be an NTA-domain with constants $M$ and $r_0$, and let $u$ be a weak non-negative solution in $\Omega_T$. Fix a point $(x_0,t_0) \in \partial \Omega \times (0,T]$, and denote $A_r^- = (a_{r/2}(x_0),t_0)$ for $0 < r < r_0$.  There exists $C, \tau_0 , \tau_1$, all depending only on $p,n,M$, such that if
	\begin{equation*}
		\left(t_0 - \tau_0 u(A_r^-)^{2-p} r^p , t_0+ (\tau_0+\tau_1) u(A_r^-)^{2-p}r^p \right) \subset (0,T)\,,
	\end{equation*}
	and if $u$ vanishes continuously on $S_T \cap \left( B_r(x) \times (t_0, t_0+ (\tau_0+\tau_1) u(A_r^-)^{2-p}r^p) \right)$ from $\Omega_T$, then
	\begin{equation*}
		u(A_r^-) \leq C \frac{\mu(Q)}{r^n}\,,
	\end{equation*}
	where $\mu$ is the measure from~\cref{1.1+} and
	\begin{equation*}
		Q := B_{r}(x_0) \times \left(t_0 + \tau_0 u(A_r^-)^{2-p}r^p, t_0 + (\tau_0+\tau_1) u(A_r^-)^{2-p}r^p \right) \,.
	\end{equation*}
	Furthermore, constants $C, \tau_0, \tau_1$, are stable as $p \to 2^+$.
\end{theorem}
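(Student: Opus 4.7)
The strategy is to reduce the lower bound on $\mu(Q)$ to an intrinsic non-degeneracy estimate for $u$ near $\partial\Omega \cap B_r(x_0)$, obtained by combining the forward-in-time Harnack chain with a sub-barrier comparison. By the intrinsic scaling $u \mapsto u/\lambda$, $t \mapsto t/\lambda^{2-p}$ with $\lambda := u(A_r^-)$, we may assume $\lambda = 1$; the time interval of $Q$ then becomes $(t_1,t_2) := (t_0 + \tau_0 r^p,\, t_0 + (\tau_0+\tau_1) r^p)$, with $\tau_0, \tau_1$ to be fixed as universal constants depending only on $p,n,M$.

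First, apply \Cref{NTA FChain} at $A_r^- = (a_{r/2}(x_0), t_0)$ with $\delta = 1$ and with $\tau_0$ chosen to match the intrinsic waiting time in that theorem; its backward-in-time hypothesis is met by the standing assumption on the time interval in the statement. We obtain
\[
u(z, t_1) \geq c_\sharp \quad \text{for every } z \in B_{r/16}(a_{r/2}(x_0)),
\]
with $c_\sharp = c_\sharp(p, n, M) > 0$.

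Second, initialize a non-negative weak sub-solution $\beta$ on $B_r(x_0) \cap \Omega \times [t_1, t_2]$ whose initial trace $\beta(\cdot, t_1)$ is a smooth bump of height $c_\sharp$ supported in $B_{r/16}(a_{r/2}(x_0))$, and which vanishes continuously on $\partial\Omega \cap B_r(x_0)$ and on $\partial B_r(x_0) \cap \Omega$ throughout $[t_1, t_2]$. By the comparison principle, $\beta \leq u$ on the whole cylinder. An intrinsic expansion-of-positivity argument, in the spirit of the Harnack chains of \Cref{s.Harnack}, then propagates the positivity of $\beta$ to a layer adjacent to $\partial\Omega \cap B_{r/2}(x_0)$ over a universal subinterval of $(t_1, t_2)$, producing the non-degeneracy
\[
u(x,t) \geq \beta(x,t) \geq c_\flat\, \frac{d(x,\partial\Omega)}{r}
\]
for $(x,t)$ in that layer, where $c_\flat = c_\flat(p, n, M) > 0$.

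Third, convert this non-degeneracy into the desired lower bound via the weak formulation \cref{1.1+}. Testing against $\phi(x,t) = \psi(x)\eta(t)$, with $\psi$ a smooth cutoff that reaches $\partial\Omega \cap B_{r/2}(x_0)$ and $\eta$ a smooth time cutoff supported in the subinterval of Step~2, and approximating $\partial\Omega$ by smoother NTA boundaries so that $\int \phi\, d\mu$ is identifiable with the limit of boundary-flux integrals of $|\nabla u|^{p-1}$, the estimate $|\nabla u|^{p-1} \geq c\, \lambda^{p-1}/r^{p-1}$ from Step~2 gives
\[
\mu(Q) \gtrsim \frac{\lambda^{p-1}}{r^{p-1}} \cdot r^{n-1} \cdot \tau_1 \lambda^{2-p} r^p = c\, \tau_1\, \lambda\, r^n,
\]
as desired. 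The principal obstacle is Step~2: constructing a sub-solution that simultaneously carries positivity deep into the NTA interior and exhibits linear non-degeneracy at the rough boundary, as a Hopf-lemma type construction is not directly available in this setting. A secondary technical point in Step~3 is the smooth approximation of $\partial\Omega$ used to convert the abstract distributional pairing $\int \phi\, d\mu$ into a flux integral, which must be carried out without introducing spurious boundary-measure contributions; here the NTA corkscrew geometry and the intrinsic Harnack-chain machinery of \Cref{s.Harnack} are the key tools.
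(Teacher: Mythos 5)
The first step (apply the forward Harnack chain from \Cref{NTA FChain} to spread the positivity of $u(A_r^-)$) matches the paper, and $\tau_0$ is fixed this way. After that, however, your proposal diverges from the paper and develops two genuine gaps.

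\textbf{The linear non-degeneracy in Step 2 is false in NTA domains.} You aim for $u(x,t)\gtrsim d(x,\partial\Omega)/r$ near $\partial\Omega\cap B_{r/2}(x_0)$, but this Hopf-type lower bound simply does not hold for a general NTA domain: solutions vanishing on a corkscrew-NTA boundary generically decay like $d(x,\partial\Omega)^{\alpha}$ with an exponent $\alpha>1$ determined by the local geometry (e.g., near a Lipschitz cone or corner). You correctly flag that a Hopf lemma ``is not directly available,'' but the issue is not availability — it is validity, and no subsolution construction can manufacture linear growth where the PDE does not allow it. The paper only proves such a linear lower bound in the $C^{1,1}$ setting (\Cref{th2-}, \Cref{ComparisonFunction}), and \Cref{MuLowerBound} is deliberately designed so that no Hopf-type estimate is needed.

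\textbf{Step 3 relies on a flux identification that the paper's proof deliberately avoids.} You want to write $\int\phi\,d\mu$ as a boundary-flux integral of $|\nabla u|^{p-1}$ via smoothing of $\partial\Omega$, and then lower-bound $|\nabla u|^{p-1}$ pointwise using Step~2. Besides inheriting the failure of Step~2, this smoothing of an NTA boundary and convergence of the associated Riesz measures is itself a substantial unproved claim. The paper instead reduces to a model solution $v$ (solving the Dirichlet problem with a characteristic-function initial datum) by the comparison principle, and then orders the Riesz measures $\mu_v\leq\mu_u$ via the purely distributional \Cref{meas order}. For the model solution the lower bound $\mu_v(Q)\geq 1/C$ is obtained by an energy argument in \Cref{MuLowerSimple}: one introduces the unconstrained solution $h$ in the full cylinder, shows via the oscillation decay \Cref{lemoscdecay} and Harnack that $h-v\geq\epsilon$ uniformly in an interior cylinder, tests the equation for $h-v$ with $\phi=\min\{h-v,\epsilon\}$, uses the monotonicity of the $p$-Laplacian to produce $\int|\nabla\phi|^p$, and then closes with the parabolic Sobolev inequality. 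No pointwise gradient bound and no Hopf-type lower bound on $u$ appear anywhere. You would need to replace your Steps 2--3 with this measure-comparison plus energy/Sobolev mechanism, or with some equivalent argument that does not presuppose linear boundary decay.
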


 To prove \Cref{MuLowerBound} we first consider the model problem in \Cref{MuLowerSimple}, and we prove that the measure associated  to this model problem is bounded from below by a constant. Returning to \Cref{MuLowerBound}, we then apply the intrinsic Harnack inequality to obtain a lower bound on the function such that by the comparison principle the solution $v$ in \Cref{MuLowerSimple} is below our solution $u$. The result then follows by the fact that the corresponding measures are ordered according to \Cref{meas order}, a fact easily realized if the domain is smooth, as in this case the measure is just the modulus of the gradient to the power $p-1$.

\begin{lemma}
	\label{MuLowerSimple} Let $\Omega \subset \R^n$ be an NTA-domain with constants $M$ and $r_0=2$. There exists constants $C,T_M$, both depending on $p,n,M$, such that if $v$ is a continuous solution to the problem
	\begin{equation*}
		\label{eq:vDirichlet}
		\begin{cases}
			v_t - \Delta_p v = 0 &\text{ in } (\Omega \cap B_2(0)) \times (0,T_0) \\
			v = 0 &\text{ on } \partial (\Omega \cap B_2(0)) \times [0,T_0) \\
			v = \chi_{B_{1/(4M)}(a_1(0))} & \text{ on } (\Omega \cap B_2(0)) \times \{0 \}\,,
		\end{cases}
	\end{equation*}
	then
	\begin{equation*}
		\mu_v\big(B_2(0) \times (0,T_0)\big) \geq 1/C\,.
	\end{equation*}
	Furthermore, constants $C, T_M$, are stable as $p \to 2^+$
\end{lemma}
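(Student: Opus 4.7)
I would prove this by comparing $v$ with an auxiliary ``free'' solution $w$ on the larger cylinder $B_2(0)\times(0,T_M)$ that ignores the internal boundary $\partial\Omega$, and then extracting a lower bound on $\mu_v$ from mass balance. Specifically, let $w$ solve the $p$-parabolic equation on $B_2(0)\times(0,T_M)$ with zero Dirichlet data only on $\partial B_2(0)\times[0,T_M)$ and initial data $\chi_{B_{1/(4M)}(a_1(0))}$. Extending $v$ by zero to $B_2(0)\setminus\Omega$ produces a continuous weak subsolution on $B_2(0)\times(0,T_M)$, so the comparison principle gives $v\leq w$ throughout $B_2(0)\times(0,T_M)$.

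The constant $T_M$ is chosen small enough, depending only on $n$ and $p$, so that finite speed of propagation for the degenerate $p$-parabolic equation forces the supports of both $v$ and $w$ to remain inside $B_{3/2}(0)$ throughout $[0,T_M]$; in particular, no flux crosses $\partial B_2(0)$ for either solution. Testing the weak formulation for $v$ with a time-independent spatial cutoff $\phi\in C_c^\infty(B_2(0))$ with $\phi\equiv 1$ on $B_{3/2}(0)$, the gradient term vanishes because $v\equiv 0$ on $\supp\nabla\phi$, and one arrives at the identity
\[
\mu_v(B_2(0)\times(0,T_M)) \;=\; |B_{1/(4M)}| - \int_{\Omega\cap B_2(0)} v(x,T_M)\dx.
\]
Mass conservation for $w$ (again, no flux through $\partial B_2$) gives $\int_{B_2(0)} w(x,T_M)\dx = |B_{1/(4M)}|$, so that $\int_{\Omega\cap B_2(0)}w(x,T_M)\dx = |B_{1/(4M)}| - \int_{B_2(0)\setminus\Omega} w(x,T_M)\dx$. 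Combining with $v\leq w$,
\[
\mu_v(B_2(0)\times(0,T_M)) \;\geq\; \int_{B_2(0)\setminus\Omega} w(x,T_M)\dx.
\]

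To bound this last integral from below, I first apply the weak Harnack inequality \Cref{TuomoWH} to $w$ at $x_0=a_1(0)$ with $r=1/(4M)$: since $\fint_{B_r(a_1(0))} w_0\dx = 1$, we obtain $w\geq c_0(n,p)$ on $B_{1/(2M)}(a_1(0))$ at some time $t_1 \leq c(n,p)\,M^{-p}$. Because $w$ is a regular solution of the $p$-parabolic equation on the smooth domain $B_2(0)$, I then iterate the intrinsic Harnack inequality \Cref{Har} along a chain of balls of universal radius (say, $r=1/16$) in the interior of $B_2(0)$ to propagate positivity from $a_1(0)$ to the exterior corkscrew point $x^-$ at $0\in\partial\Omega$ and scale $1$, which by the NTA property satisfies $|x^-|<1$ and $B_{1/(2M)}(x^-)\subset B_2(0)\setminus\Omega$. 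Since $|a_1(0)-x^-|\leq 2$, the chain has a universally bounded number of steps, so both the total waiting time and the cumulative Harnack loss depend only on $n$ and $p$. A final application of \Cref{Har} at $x^-$ at scale $1/(4M)$ then delivers $w\geq c_2(n,p,M)$ on a ball of radius $\sim 1/M$ inside $B_2(0)\setminus\Omega$ at time $T_M$, giving the desired $\int_{B_2(0)\setminus\Omega} w(x,T_M)\dx \geq c(n,p,M)/M^n \geq 1/C$.

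The main obstacle is coordinating the time scales: $T_M$ must be small enough that finite speed of propagation keeps the supports of $v,w$ inside $B_{3/2}(0)$, yet large enough to accommodate the Harnack chain from $a_1(0)$ to $x^-$ together with the final small-scale Harnack step. Since the chain length and cumulative waiting time depend only on $n,p$ (the distance $|a_1(0)-x^-|\leq 2$ being universal), and the last Harnack step adds only $O(M^{-p})$ to the total waiting time, both constraints can be simultaneously accommodated by a single $T_M = T_M(n,p,M)$. Stability as $p\to 2^+$ then follows from the stability of the constants in \Cref{TuomoWH} and \Cref{Har}, together with the classical finite-speed estimates.
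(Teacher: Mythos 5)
Your proposal takes a genuinely different route from the paper's: you use a mass-balance identity combined with Harnack chains for $w$, whereas the paper works with the difference $h-v$ (where $h$ is the free solution on $B_2(0)$), establishes that $h$ is bounded below near $0 \in \partial\Omega$ via Harnack chains and that $v$ is small there via oscillation decay, then tests the weak formulations against $\phi=\min\{h-v,\epsilon\}$ and applies the parabolic Sobolev inequality. Unfortunately, your mass-balance step has a genuine gap that the paper's energy argument sidesteps.

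The problem is the incompatibility between your two requirements: (i) that the support of $w$ stay strictly inside $B_{3/2}(0)$ (or any $B_{2-\epsilon}(0)$) so that there is no flux through $\partial B_2(0)$ and mass is conserved, and (ii) that positivity of $w$ propagate into a region of $B_2(0)\setminus\Omega$ of definite size. Before the support of $w$ reaches $\partial B_2(0)$, $w$ coincides with the Cauchy solution, whose support is, by radial symmetry about $a_1(0)$, exactly a ball $B_{\sigma(t)}(a_1(0))$. To reach the exterior corkscrew $x^-$ you need $\sigma(T_M)\geq |a_1(0)-x^-|$, and to keep the support away from $\partial B_2(0)$ you need $\sigma(T_M) < 2 - |a_1(0)|$; hence you need $|a_1(0)| + |a_1(0)-x^-| < 2$. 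But the NTA definition only gives $M^{-1}<|a_1(0)|<1$ and $M^{-1}<|x^-|<1$, so $|a_1(0)|+|a_1(0)-x^-|$ can be arbitrarily close to $3$. Even if you only aim for some smaller exterior ball rather than $x^-$, the best you can do is reach an exterior corkscrew ball of scale $\delta \lesssim 2-|a_1(0)|-d(a_1(0),\partial\Omega)$, which can be made arbitrarily small within the class of NTA domains with fixed $(M,r_0)$ (for instance when $|a_1(0)|\to 1$ with $0\in\partial\Omega$ being the closest boundary point). Your lower bound on $\int_{B_2\setminus\Omega} w(\cdot,T_M)\dx$ then degenerates, so the constant $C$ you obtain depends on the specific domain, not only on $p,n,M$ as claimed.

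A secondary issue is your first Harnack step: after applying \Cref{TuomoWH} at scale $1/(4M)$ you obtain positivity at time $t_1\sim M^{-p}$, but the intrinsic Harnack \Cref{Har} at the universal scale $r=1/16$ requires the backward time interval $\left(t_1-\theta(4r)^p, t_1\right)$ to lie in $(0,T)$ with $\theta$ of unit size; this fails for large $M$. Bridging the gap from scale $1/(4M)$ to scale $1/16$ by doubling steps of the intrinsic Harnack costs a waiting time that grows (polynomially, and for some chain choices even exponentially) in $M$, and this again competes against the finite-speed constraint. The paper avoids all of these coordination problems because its energy/Sobolev argument never needs the support of anything to stay inside $B_2(0)$: it only needs $h$ bounded below near $0$ (from Harnack chains) and $v$ bounded above there (from oscillation decay), and then extracts the measure directly from testing the difference of the weak formulations.
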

\begin{proof}
	To begin with, extend $v$ as zero to the rest of $Q = B_2(0) \times (0,T_0)$, i.e., set $v \equiv 0$ in $(B_2(0) \setminus \Omega) \times (0,T_0)$, and let $\mu_v$ be the associated measure as in \Cref{1.1+}. Let then $h$ be the solution to the problem
	\begin{equation*}
		\begin{cases}
			h_t - \Delta_p h = 0 &\text{ in } Q \\
			h = w &\text{ on } \partial_p Q \,.
		\end{cases}
	\end{equation*}
	We observe that the supremum of $h$ and $v$, which is one, is attained at the bottom of the cylinder. Let us now recall the decay estimate in \Cref{lemoscdecay}, which implies that for $Q_r^\lambda(0,t_0) := (\Omega \cap B_r(0)) \times (t_0 - \lambda^{2-p} r^p , t_0)$ we have
	\begin{equation}
		\label{eq:v decay} \sup_{Q_{\sigma^j}^{2^{-j} }(0,t_0) } v \leq 2^{-j}
	\end{equation}
	for $j \in \mathbb{N}$ provided that $t_0 \in [1,T_0]$ and $T_0 > 1$. On the other hand, \Cref{FChainMeas} gives us that
	\begin{equation*}
		1 = \fint_{B_{(4M)^{-1}}(a_1(0))} h(x,0) \dx \leq C \inf_{z \in B_{(2M)^{-1}}(0)} u(z, \tau )
	\end{equation*}
	with $\tau$ and $C$ depending on $p,n,M$. We then apply \Cref{TuomoWH} in order to get
	\begin{equation}
		\label{eq:inf h} 1 \leq \hat C \inf_{\hat Q} h\,, \qquad \hat Q := B_{1/M}(0) \times (T_0/2,T_0)
	\end{equation}
	by properly choosing $T_0$ by means of $\tau$ and $C$ to be larger than $2$. We then choose large enough $j^\filledstar \in \mathbb{N}$ so that
	\begin{equation*}
		2^{-j^\filledstar} \leq \frac{1}{2 \hat C} \qquad \mbox{and} \qquad \sigma^j \leq 1/M \,.
	\end{equation*}
	Then, sliding $t_0$ along $(1,T_0]$ in~\cref{eq:v decay}, we obtain by combining~\cref{eq:v decay,eq:inf h} that there is $r_1 = r_1(p,n,M)$ such that
	\begin{equation*}
		\inf_{\tilde Q} (h-v) \geq \frac{1}{2 \hat C} =: \epsilon \,, \qquad \tilde Q := B_{r_1}(0) \times (T_0/2,T_0)\,,
	\end{equation*}
	where $\epsilon \equiv \epsilon(p,n,M)$.
	
	Let us now define $\phi = \min \{ h-v, \epsilon \}$, which is vanishing on $\partial_p Q$ and is $\epsilon$ on $\tilde Q$. Then from the weak formulation of $h$ and $v$ we get that
	\begin{align*}
		\int_{Q \cap \{t<\tau \}} &(h-v)_t \phi + \big ( |\grad h|^{p-2} \grad h - |\grad v|^{p-2}\grad v \big ) \cdot \grad \phi \dx \dt = \int_{Q\cap \{t<\tau \}} \phi \de \mu_w \,.
	\end{align*}
	For the time term, we integrate to obtain
	\begin{align*}
		\int_{Q\cap \{t<\tau \}} (h-v)_t \phi \dx \dt & = \int_{Q\cap \{t<\tau \}} \partial_t \left( \int_0^{h-v} \min(s,\epsilon) \de s \right)\dx \dt \\
		& \geq \frac12 \int_{B_2(0)} \phi^2(x,\tau) \dx \,.
	\end{align*}
	We also have the elementary inequality
	\begin{equation*}
		\big ( |\grad h|^{p-2} \grad h - |\grad v|^{p-2}\grad v \big ) \cdot \grad \phi \geq \frac1C |\nabla \phi|^p\,,
	\end{equation*}
	since $p>2$. Hence by combining the last three displays we arrive at
	\begin{align*}
		\sup_{0 < t < T_0} \int_{B_{1}(0)} \phi^2(x, t) \dx + \int_{Q} |\grad \phi|^p \dx \dt \leq C \epsilon \mu_v (Q)\,.
	\end{align*}
	Using the parabolic Sobolev inequality (\cite[Corollary I.3.1]{DB}) we obtain that
	\begin{align*}
		\epsilon^p |\tilde Q| & \leq \int_Q \phi^p \dx \dt \\
		& \leq C \left (\sup_{0 < t < T_0} \int_{B_2(0)} \phi^p(x, t) \dx + \int_{Q} |\grad \phi|^p \dx \dt \right ) \\
		&\leq C \left (\epsilon^{p-2}\sup_{0 < t < T_0} \int_{B_2(0)} \phi^2(x, t) \dx + \int_{Q} |\grad \phi|^p \dx \dt \right )  \,.
	\end{align*}
	Hence we see that
	\begin{equation*}
		1 \leq C\, \mu_v (Q)
	\end{equation*}
	with a constant $C \equiv C(p,n,M)$ through the dependencies of $\epsilon, r_1,T_0$.
\end{proof}

The next lemma provides a comparison estimate for the measures. If two solutions are ordered, then the corresponding measures will be ordered as well.

\begin{lemma}
	\label{meas order} Let $\Omega \subset \R^n$ be a domain. Let $u$ and $v$ be weak solutions in $(\Omega \cap B_r(0)) \times (0,T)$ such that $u \geq v \geq 0$ and both vanish continuously on the lateral boundary $(\partial \Omega \cap B_r(0)) \times (0,T)$. Then
	\begin{equation*}
		\mu_v \leq \mu_u \qquad \mbox{in } \, B_r(0) \times (0,T)
	\end{equation*}
	in the sense of measures.
\end{lemma}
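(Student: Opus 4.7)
The plan is to pit the two weak formulations against each other with a carefully chosen test function. Extending $u$ and $v$ by zero outside $\Omega$ inside $Q := B_r(0) \times (0,T)$ yields two continuous weak subsolutions, so the measures $\mu_u$ and $\mu_v$ are well-defined via~\cref{1.1+} and are supported on the lateral boundary $(\partial\Omega \cap B_r(0)) \times (0,T)$, where $u = v = 0$. Taking the difference of the two identities gives, for every $\phi \in C_0^\infty(Q)$,
\begin{equation*}
\int_Q \phi\, \de(\mu_u - \mu_v) = \int_Q w\, \partial_t \phi\, \dx\dt - \int_Q \bigl(|\grad u|^{p-2}\grad u - |\grad v|^{p-2}\grad v\bigr)\cdot\grad\phi\, \dx\dt,
\end{equation*}
where $w := u - v \geq 0$. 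The task is to show the right-hand side is nonnegative whenever $\phi \geq 0$.

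For an arbitrary nonnegative $\psi \in C_0^\infty(Q)$, I would test with $\phi_\varepsilon := \psi\, G_\varepsilon(w)$, where $G_\varepsilon:\R \to [0,1]$ is a Lipschitz cutoff satisfying $G_\varepsilon \equiv 1$ on $(-\infty,0]$, $G_\varepsilon \equiv 0$ on $[\varepsilon,\infty)$ and $G_\varepsilon' \leq 0$; this cutoff equals one precisely on the set $\{w = 0\}$ containing the supports of both measures. Consequently, dominated convergence gives $\int_Q \phi_\varepsilon\, \de(\mu_u - \mu_v) \to \int_Q \psi\, \de(\mu_u - \mu_v)$ as $\varepsilon \to 0$. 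Expanding $\grad\phi_\varepsilon = G_\varepsilon(w)\grad\psi + \psi G_\varepsilon'(w)\grad w$ and the analogous identity for $\partial_t \phi_\varepsilon$, the right-hand side of the master identity splits into three pieces. The diagonal spatial piece
\begin{equation*}
-\int_Q \psi G_\varepsilon'(w) \bigl(|\grad u|^{p-2}\grad u - |\grad v|^{p-2}\grad v\bigr)\cdot(\grad u - \grad v)\, \dx\dt
\end{equation*}
is nonnegative because the monotonicity of $\xi \mapsto |\xi|^{p-2}\xi$ makes the inner product nonnegative while $\psi \geq 0$ and $G_\varepsilon' \leq 0$ render the prefactor nonpositive. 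The transverse spatial piece $-\int_Q G_\varepsilon(w)\bigl(|\grad u|^{p-2}\grad u - |\grad v|^{p-2}\grad v\bigr)\cdot\grad\psi\, \dx\dt$ tends to zero as $\varepsilon \to 0$, because $G_\varepsilon(w) \to \mathbf{1}_{\{w = 0\}}$ while $\grad u = \grad v$ almost everywhere on $\{w = 0\} = \{u = v\}$ by Stampacchia's chain rule for Sobolev functions. Finally, after integration by parts in the time piece $\int_Q w\, \partial_t\phi_\varepsilon\, \dx\dt$, using the primitive $K_\varepsilon(s) := \int_0^s \tau G_\varepsilon'(\tau)\,\de\tau$ (which satisfies $|K_\varepsilon| \leq \varepsilon/2$) together with $w\, G_\varepsilon(w) \leq \min(w,\varepsilon) \to 0$, this last contribution also vanishes in the limit. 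Combining all three and taking $\liminf$ on the nonnegative diagonal piece yields $\int_Q \psi\, \de(\mu_u - \mu_v) \geq 0$, and arbitrariness of $\psi \geq 0$ concludes the measure inequality.

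The only nontrivial technical obstacle is the use of $\phi_\varepsilon$ as a test function in~\cref{1.1+}: since $\phi_\varepsilon$ depends on $u$ and $v$ and the time derivatives $\partial_t u, \partial_t v$ exist only distributionally in the energy dual space, the chain rule $\partial_t G_\varepsilon(w) = G_\varepsilon'(w)(\partial_t u - \partial_t v)$ must be interpreted with care. The standard fix is to Steklov average $u$ and $v$ in time as in~\cite{DB}, carry out the computation on the averaged equations where all chain rules are classical, and pass first to the limit in the averaging parameter and then let $\varepsilon \to 0$. All other steps reduce to the $p$-Laplace monotonicity, Stampacchia's theorem, and dominated convergence.
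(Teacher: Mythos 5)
Your proof is correct, but it takes a route that is dual to the one in the paper. The paper tests the difference of the two \emph{solution} identities (which hold with zero right-hand side) against $\phi = \min\bigl(1,(u-v-\epsilon)_+/\epsilon\bigr)\psi$, a cutoff that lives in the interior where $u-v$ is \emph{large}; no measures appear, the positive diagonal term is simply discarded, and the limit $\epsilon\to 0$ recovers $\int_Q(u-v)\partial_t\psi - \int_Q\bigl(|\grad u|^{p-2}\grad u - |\grad v|^{p-2}\grad v\bigr)\cdot\grad\psi \geq 0$. You instead test the difference of the two \emph{subsolution} identities, with the measures on the right, against the complementary cutoff $\psi\,G_\varepsilon(u-v)$, which is concentrated where $u-v$ is \emph{small}; this makes $\int\phi_\varepsilon\,\de(\mu_u-\mu_v)$ identically equal to $\int\psi\,\de(\mu_u-\mu_v)$ (so no dominated convergence is actually needed for that piece — it is exact for every $\varepsilon$), while the right-hand side splits into a nonnegative diagonal term and two vanishing remainders. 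Both proofs hinge on the monotonicity of $\xi\mapsto|\xi|^{p-2}\xi$ and the same Steklov-averaging technicality for the time term. Your version additionally needs two inputs the paper's proof avoids: that $\mu_u-\mu_v$ is supported on $S_T$ where $u=v=0$ (which the paper does establish when introducing $\mu$), and Stampacchia's chain rule $\grad u = \grad v$ a.e.\ on $\{u=v\}$ to kill the transverse term. The paper's proof is thus somewhat more economical, but yours is a valid and genuinely different argument.
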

\begin{proof}
	To show this consider the test function $\phi = \min(1, (u-v-\epsilon)_+/\epsilon) \psi$, where $\psi$ is non-negative and belongs to $C_0^\infty (Q)$ with $Q = B_r(0) \times (0,T)$. Obviously $\phi$ is supported in $(\Omega \cap B_r(0)) \times (0,T)$, because both $u$ and $v$ vanish continuously on the lateral boundary $(\partial \Omega \cap B_r(0)) \times (0,T)$. Since both $u$ and $v$ are weak solutions, we have, by extending them both by zero in $(B_r(0) \setminus \Omega) \times (0,T)$, that
	\begin{align}\label{eq:diffmeasure}
		\int_Q (u-v)_t \phi \dx \dt + \int_Q \big (|\grad u|^{p-2}\grad u -|\grad v|^{p-2}\grad v \big ) \cdot \grad \phi \dx \dt = 0\,.
	\end{align}
	Let us first treat the time term. Integrating by parts we get that
	\begin{align}
		\int_Q (u-v)_t \phi \dx \dt & = \int_{Q} \partial_t \left( \int_0^{u-v} \min(1, (s-\epsilon)_+/\epsilon) \de s \right) \psi \dx \dt \notag \\
		& = - \int_{Q} \left( \int_0^{u-v} \min(1, (s-\epsilon)_+/\epsilon) \de s \right) \partial_t \psi\dx \dt \notag \\
		& \to - \int_{Q} (u-v) \partial_t \psi\dx \dt\,,  \label{eq:dtconvergence}
	\end{align}
	as $\epsilon \to 0$.
	To treat the elliptic term, we begin by noting that
	\begin{align} \label{eq:phigradient}
		\grad \phi = \grad \psi \min(1, (u-v-\epsilon)_+/\epsilon) + \frac{1}{\epsilon} \psi \grad (u-v) \chi_{U_\epsilon}\,,
	\end{align}
	where $U_\epsilon:= \{u-v > \epsilon \} \cap (\Omega \cap B_r(0)) \times (0,T)$. The second term in \Cref{eq:phigradient} will give rise to a positive term in \Cref{eq:diffmeasure}, hence we discard it and obtain an inequality
	\begin{align}
		& \int_Q \big (|\grad u|^{p-2}\grad u -|\grad v|^{p-2}\grad v \big ) \cdot \grad \phi \dx \dt \notag \\
		& \qquad \geq \int_Q \big (|\grad u|^{p-2}\grad u -|\grad v|^{p-2}\grad v \big ) \cdot \grad \psi \min(1, (u-v-\epsilon)_+/\epsilon) \dx \dt \notag \\
		& \qquad \to \int_Q \big (|\grad u|^{p-2}\grad u -|\grad v|^{p-2}\grad v \big ) \cdot \grad \psi \dx \dt\,,  \label{eq:dxconvergence}
	\end{align}
	as $\epsilon \to 0$ by dominated convergence. Combining the convergence in \Cref{eq:dtconvergence,eq:dxconvergence} with \Cref{eq:diffmeasure} we arrive at the inequality
	\begin{equation}
		\label{eq:meascomp} - \int_{Q} (u-v) \partial_t \psi\dx \dt + \int_Q \big (|\grad u|^{p-2}\grad u -|\grad v|^{p-2}\grad v \big ) \cdot \grad \psi \dx \dt \leq 0
	\end{equation}
	after sending $\epsilon \to 0$. Since the non-negative function $\psi \in C_0^\infty (Q)$ was arbitrary, \Cref{eq:meascomp} finishes the proof after recalling the definitions of $\mu_u$ and $\mu_v$.
\end{proof}

We now have all the technical tools to complete the proof of \Cref{MuLowerBound}.
\begin{proof}
	[Proof of \Cref{MuLowerBound}] Let $u$ be as in \Cref{MuLowerBound} with $A_{r}^- := (a_{r/2}(x_0),t_0)$. Applying the Harnack estimate in~\Cref{NTA FChain} yields for a constant $C = C(p,n,M)$
	\begin{equation*}
		u(A_r^-) \leq \tilde C \inf_{y \in B_{r/(8M)}(a_{r/2}(x_0))} u\left(y,t_0 + \tau_0 u(A_{r}^-)^{2-p} r^p \right)\,, \qquad \tau_0 := \frac{C}{(2M)^p}\,,
	\end{equation*}
	since
	\begin{equation*}
		t_0 - \tau_0 u(A_{r}^-)^{2-p} r^p >0 \,.
	\end{equation*}
	Consider the scaled solution
	\begin{equation*}
		\hat u (x,t ) = \frac{1}{\lambda} u \left (x_0 + \frac{r}{2} x, t _0 + \tau_0 u(A_{r}^-)^{2-p} r^p + \lambda^{2-p} \left(\frac{r}{2} \right)^p t \right)\,, \qquad \lambda:= \frac{u(A_r^-) }{\tilde C}\,,
	\end{equation*}
	so that
	\begin{equation*}
		\inf_{y \in B_{1/(4M)}(a_1(0))} \hat u(y,0) \geq 1\,,
	\end{equation*}
	and $\Omega$ is mapped to $\hat \Omega$ and $(x_0,t_0 + \tau_0 u(A_{r}^-)^{2-p} r^p )$ to $(0,0)$. The comparison principle shows that the function $v$ defined in~\Cref{MuLowerSimple} satisfies $v\leq \hat u$ in $(\hat \Omega \cap B_2(0)) \times (0,T_0)$ provided that
	\begin{equation*}
		t _0 + \tau_0 u(A_{r}^-)^{2-p} r^p + \lambda^{2-p} \left(\frac{r}{2} \right)^p T_0 < T\,.
	\end{equation*}
	Thus we choose $\tau_1 := \tilde C^{p-2} 2^{-p} T_0$ in the statement. Applying~\Cref{MuLowerSimple,meas order}
	\begin{equation*}
		\mu_{\hat u}\big(B_2(0) \times (0,T_0)\big) \geq \mu_v\big(B_2(0) \times (0,T_0)\big) \geq 1/C \,.
	\end{equation*}
	Scaling back to $u$ gives us the result.
\end{proof}


\section{Construction of barriers} \label{s.barriers}

In this section we construct the barriers that will serve as the starting point for the estimates of the decay-rate of the solutions. The upper barrier in \Cref{UpperBarrier} is based on the function constructed in \cite[Theorem 4.1]{DKV}. However the subsolution constructed in \Cref{lembar} seems to be new and allows us to obtain $p$-stable estimates from below on the decay rate.
\begin{lemma}
	\label{lembar} Let $T = (n^{p-1} p)^{-1}$ and for $a\in (0,1)$, let
	\begin{align*}
		\label{rho_0} \varrho_0&:= \min\left \{ \frac{a p}{n (p-2)} + 1 \, , 2\right\}^{\frac{p-1}{p}} \,.
	\end{align*}
	Then the function $h$,
	\begin{align*}
		h(x,t)= g(|x|,t) - g(\varrho_0,t), \quad g(r,t) = \left [1-\frac{p-2}{p^{\frac{p}{p-1}}} \frac{r^{\frac{p}{p-1}}-1}{t^{\frac{1}{p-1}}}\right ]_+^{\frac{p-1}{p-2}},
	\end{align*}
	is a classical subsolution in $(B_{\varrho_0}(0)\setminus \overline {B_{1}(0)})\times (0,T)$ satisfying the boundary conditions
	\begin{equation} \label{eq:hbdrycond}
		\begin{cases}
			h = 0, &\text{on }\partial B_{\varrho_0}(0)\times (0,T) \\
			h = 0, &\text{on } (B_{\varrho_0}(0)\setminus \overline {B_{1}(0)})\times \{0\} \\
			h = 1-\left [1-\frac{p-2}{p^{\frac{p}{p-1}}} \frac{{\varrho_0} ^{\frac{p}{p-1}}-1}{t^{\frac{1}{p-1}}}\right ]_+^{\frac{p-1}{p-2}} , & \text{on } \partial B_{1}(0) \times (0,T)\,.
		\end{cases}
	\end{equation}
	Furthermore, $x \mapsto h(x,t)$ is a radially decreasing function satisfying
	\begin{equation*}
		\inf_{1 \leq |x| \leq {\varrho_0} } |\grad h(x, T)| \geq n \exp\left(-\frac{n}{p} \right) (1-a)^{\frac{n}{p}} \,,
	\end{equation*}
	and $h(x,t) \equiv h_p(x,t)$ tends to
	\begin{equation*}
		\exp\left (-\frac{|x|^2-1}{4t}\right )-\exp \left(-\frac{1}{4t}\right )
	\end{equation*}
	as $p \to 2^+$.
\end{lemma}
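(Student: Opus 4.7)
The plan is to verify directly that the radial profile $g$ is a classical subsolution on its support $\{F>0\}$ by an explicit substitution into $\partial_t - \Delta_p$, and then to read off every listed property of $h = g - g(\varrho_0, t)$ from the resulting formula.

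First I would introduce shorthand $\alpha := (p-1)/(p-2)$, $\beta := p/(p-1)$, $c_p := (p-2)/p^{p/(p-1)}$, so that $g = F^\alpha$ with $F(r,t) := [1 - c_p(r^\beta - 1)/t^{1/(p-1)}]_+$. The key algebraic cancellation is $\alpha \beta c_p = p^{-1/(p-1)}$; combining it with the radial form of $\Delta_p$, a direct differentiation yields
\[
(\partial_t - \Delta_p) g \;=\; \frac{F^{\alpha - 1}}{t}\left(\frac{nF}{p} - \frac{c_p}{(p-2)\, t^{1/(p-1)}}\right).
\]
Thus the subsolution property reduces to the pointwise inequality $F \leq 1/(n (pt)^{1/(p-1)})$. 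Since $F(\cdot, t)$ is decreasing in $r$ with $F(1, t) = 1$, it suffices to check the inequality at $r = 1$; the choice $T = (n^{p-1} p)^{-1}$ makes it hold on $(0, T]$, with equality exactly at $(r, t) = (1, T)$. Transfer to $h$ is immediate: $g(\varrho_0, \cdot)$ is spatially constant and nondecreasing in $t$ (because $\partial_t F \geq 0$), so $(\partial_t - \Delta_p)h = (\partial_t - \Delta_p) g - \partial_t g(\varrho_0, t) \leq 0$. The boundary conditions in \cref{eq:hbdrycond} and the radial monotonicity $\partial_r g = -\alpha\beta c_p F^{\alpha - 1} r^{\beta - 1}/t^{1/(p-1)} \leq 0$ are then read off the formula.

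For the gradient estimate at $t = T$, using $T^{-1/(p-1)} = n p^{1/(p-1)}$, I would compute $|\nabla h(x, T)| = n F(r, T)^{1/(p-2)} r^{1/(p-1)}$ with $F(r, T) = 1 - \tfrac{n(p-2)}{p}(r^\beta - 1)$. Writing this as $n \phi(r^\beta)^{1/p}$ with $\phi(s) := F(s)^{p/(p-2)} s$, I obtain $\phi'(s) = F^{2/(p-2)}(F - n s)$, which is non-positive on $s \geq 1$ since $F \leq 1 \leq n s$; hence the infimum over $r \in [1, \varrho_0]$ is attained at $r = \varrho_0$. The hard part will be a $p$-stable lower bound for $\phi(\varrho_0^\beta)$, because the naive estimate $(1-a)^{1/(p-2)}$ collapses as $p \to 2^+$. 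The essential tool is the elementary inequality $(1-x)^{1/x} \geq (1-x)/e$ for $x \in (0, 1)$ (equivalent to $\log u > 1 - 1/u$ for $u \in (0,1)$, which is standard monotonicity). In the regime $\varrho_0^\beta = 1 + ap/(n(p-2)) \leq 2$ (so $F(\varrho_0, T) = 1-a$), setting $u := ap/(n(p-2)) \leq 1$, this inequality reduces the claim $\phi(\varrho_0^\beta) \geq e^{-n}(1-a)^n$ to $(1+u) \geq e^{n(u-1)}(1-a)^{n(1-u)}$, which is trivial since both factors on the right are $\leq 1$. In the complementary regime $\varrho_0^\beta = 2$ with $a > n(p-2)/p$, the same inequality yields $(1-n(p-2)/p)^{p/(p-2)} \geq e^{-n}(1-n(p-2)/p)^n \geq e^{-n}(1-a)^n$, and the extra factor $2$ absorbs any slack. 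Combining both cases gives $|\nabla h(x, T)|^p \geq n^p e^{-n}(1-a)^n$, which is the stated bound.

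Finally, the $p \to 2^+$ limit is a routine application of $(1 - \epsilon B)^{1/\epsilon} \to e^{-B}$ with $\epsilon = p - 2$, applied pointwise with $B = (r^2 - 1)/(4t)$, noting that $p^{p/(p-1)} \to 4$, $t^{1/(p-1)} \to t$, and $\varrho_0^\beta \to 2$ as $p \to 2^+$ (the latter because $ap/(n(p-2)) \to \infty$ for any fixed $a \in (0, 1)$, so the minimum is realized by $2$).
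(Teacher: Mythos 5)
Your proposal is correct and follows essentially the same path as the paper: the same explicit computation of $(\partial_t-\Delta_p)g$ reducing the subsolution property to $nF\le (pt)^{-1/(p-1)}$, the same elementary inequality $(1-\eta)^{1/\eta}\ge e^{-1}(1-\eta)$ (the paper uses its $b$-power form $(1-\eta)^{b/\eta}\ge e^{-b}(1-\eta)^b$, which you recover by raising to the power $nu$), and the same two-case split on $\varrho_0$. The only minor difference is that you locate the exact minimizer of $|\nabla h(\cdot,T)|$ by differentiating $\phi(s)=F(s)^{p/(p-2)}s$, whereas the paper simply drops the factor $r^{1/(p-1)}\ge 1$ and bounds $\inf|\nabla h(\cdot,T)|\ge n\,g(\varrho_0,T)^{1/(p-1)}$ directly from the monotonicity of $g$ — a slightly shorter route to the same estimate.
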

\begin{remark}
	Note that the function in \Cref{lembar} is not continuous up to the boundary at the corner $\partial {B_{1}(0)} \times \{0\}$. However, the \emph{limsup} as we approach a point on this piece from the inside of $D := (B_{\varrho_0}(0)\setminus \overline {B_{1}(0)})\times (0,T)$ is $1$ for $h$. This implies, see \cite[Lemma 4.4]{KL}, that if we have a weak supersolution $u$ in $D$, staying above the boundary conditions in \Cref{eq:hbdrycond} in \emph{liminf} sense, and staying above $1$ on the corner $\partial {B_{1}(0)} \times \{0\}$ again in the \emph{liminf} sense, then $u$ will be above $h$ in $D$.
\end{remark}
\begin{proof}
	Let $h$ and $\varrho_0$ be as stated. By construction, the boundary conditions for $h$ are in force. To verify that $h$ is a classical subsolution in $(B_{\varrho_0}(0)\setminus \overline {B_{1}(0)})\times (0,T)$, we first compute,
	\begin{align*}
		\nabla g(|x|,t) & = - \frac{x}{|x|} \left[ \frac1p \frac{|x|}{t} g(|x|,t) \right]^{\frac{1}{p-1}} \,, \\
		|\nabla g(|x|,t)|^{p-2} \nabla g(|x|,t) & = -\frac1p \frac{x}{t} g(|x|,t)\,, \\
		-\Delta_p g(|x|,t) & = \left[\frac{n}{p t} g(|x|,t)^{\frac{p-2}{p-1}} - p^{-\frac{p}{p-1}} \frac{|x|^{ \frac{p}{p-1} }}{t^{\frac{p}{p-1}}} \right] g(|x|,t)^{\frac{1}{p-1}}\,, \\
		\partial_t g(r,t) & = p^{-\frac{p}{p-1}} \frac{r^{ \frac{p}{p-1} }-1}{t^{\frac{p}{p-1}}} g(r,t)^{\frac{1}{p-1}}\,.
	\end{align*}
	Observing that $ \partial_t h(x,t) \leq \partial_t g(|x|,t)$ it is enough to verify $g_t - \Delta_p g \geq 0$ in $(B_{\varrho_0}(0)\setminus \overline {B_{1}(0)})\times (0,T)$ for $g > 0$. Assuming $g > 0$,  we see, since $g(|x|,t) \leq 1$ for $|x| > 1$, that
	\begin{align*}
		\frac{(h_t-\Delta_p h)(x,t)}{g(|x|,t)^{\frac{1}{p-1}}} & \leq \left[ p^{-\frac{p}{p-1}} \frac{|x|^{ \frac{p}{p-1} }-1}{t^{\frac{p}{p-1}}} + \frac{n}{ p t} g(|x|,t)^{\frac{p-2}{p-1}} - p^{-\frac{p}{p-1}} \frac{|x|^{ \frac{p}{p-1} }}{t^{\frac{p}{p-1}}} \right] \\
		& \leq \frac{1}{ p t} \left[ n - \left(\frac{1}{ p t} \right)^{\frac{1}{p-1}} \right]\,.
	\end{align*}
	Since $ 0< t < T = (n^{p-1} p)^{-1}$ we have that $h_t-\Delta_p h\leq 0$ for $|x| > 1$ and $t \in (0,T)$. Note also that our choices of parameters are stable as $p \to 2$. Next, by yet an other explicit calculation we obtain
	\begin{equation*}
		\label{gradbd} \inf_{1 \leq |x| \leq \varrho_0} |\grad h(x,T)| \geq \left( \frac{1}{p T} g(\varrho_0,T) \right)^{\frac{1}{p-1}}= n g(\varrho_0,T)^{\frac{1}{p-1}}\,.
	\end{equation*}
	To complete the proof we need to estimate $g(\varrho_0,T)$ from below. To do this we note that
	\begin{equation*}
		 g(\varrho_0,T)^{\frac{1}{p-1}} = \left[ 1 - \frac{n(p-2)}{p} (\varrho_0^{ \frac{p}{p-1} }-1 ) \right]^{\frac{1}{p-2}}\,,
	\end{equation*}
	and we consider two cases. First, if $\varrho_0=2^\frac{p-1}{p}$, then $ap \geq n(p-2)$ and
	\begin{equation*}
		 g(\varrho_0,T)^{\frac{1}{p-1}} = (1-s)^{b/s}\,, \quad s = \frac{n(p-2)}{p}\,, \quad b = \frac{n}{p}\,.
	\end{equation*}
	Furthermore, for $\eta \in [0,1)$ we have
	\begin{align}
		(1-\eta)^{\frac{b}{\eta}} & = \exp\left(-\frac{b}{\eta} \sum_{k=1}^\infty \frac{\eta^k}{k} \right) = \exp \left (-b \sum_{k=1}^\infty \frac{\eta^{k-1}}{k}\right ) = \exp\left (-b - b \sum_{k=1}^\infty \frac{\eta^{k}}{k+1}\right) \notag \\
		& \geq \exp\left (-b-b\sum_{k=1}^\infty \frac{\eta^k}{k}\right)  = e^{-b}(1-\eta)^b \,. \label{eq:barrier helper}
	\end{align}
	Since $ap \geq n(p-2)$ implies $s \leq a < 1$, we can apply \Cref{eq:barrier helper} to get
	\begin{equation*}
		 g(\varrho_0,T)^{\frac{1}{p-1}} \geq \exp(-b) (1-s)^{b} \geq \exp(-b) (1-a)^{b} \,.
	\end{equation*}
	Second, if $\varrho_0<2^\frac{p-1}{p}$, then $ap < n(p-2)$ and using \Cref{eq:barrier helper} we get
	\begin{equation*}
		 g(\varrho_0,T) = \left( 1 - a \right)^{\frac{p-1}{p-2}} \geq \left( 1 - a \right)^{b/a} \geq \exp(-b) (1-a)^{b}.
	\end{equation*}
	Collecting the results of the two cases completes the proof of the lemma.
\end{proof}
\begin{remark}
	Note that we could, as in \cite{SV}, instead of the function in \Cref{lembar} use the Barenblatt fundamental solution together with the barriers from \cite{BV} to establish a version of \Cref{lembar}. However, this would result in a construction which is not $p$-stable.
\end{remark}

In the next lemma we construct a certain supersolution to be used in the subsequent arguments.

\begin{lemma}
	\label{UpperBarrier} Let $T,H > 0$ be given degrees of freedom. Let
	\begin{equation*}
		 k \in (0,k_0]\,, \qquad k_0 := \min\left\{\frac{p-1}{n}, T^{\frac{1}{p-1}} H^{\frac{p-2}{p-1}} \right\}\,.
	\end{equation*}
	There exists a classical supersolution $\tilde h$ in
	\begin{equation*}
		N=\{(x,t): 1<|x|<1+k,\ 0<t<T\}\,,
	\end{equation*}
	such that
	\begin{equation*}
		\begin{cases}
			\tilde h \geq 0, &\text{ in } \partial B_{1}(0) \times (0,T]\\
			\tilde h \geq H &\text{ on } (\overline {B_{1+k}(0)} \setminus B_{1}(0)) \times \{0\}\\
			\tilde h \geq H &\text{ on } \partial B_{1+k}(0) \times [0,T]\,,
		\end{cases}
	\end{equation*}
	and such that
	\begin{equation} \label{tilde h upper}
		\tilde h(x,T) \leq \frac{H\exp(2)}{k} (|x|-1),
	\end{equation}
	whenever $x \in B_{1+k}(0) \setminus B_{1}(0)$.
\end{lemma}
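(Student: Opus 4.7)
The plan is to construct $\tilde h$ as an explicit radially symmetric function, following the barrier construction from \cite[Theorem 4.1]{DKV} adapted to the annular cylinder $N$. The radial symmetry reduces the problem to a one-dimensional $p$-parabolic supersolution inequality on $[1,1+k]\times[0,T]$. Since $k \leq (p-1)/n$, the annulus is thin, so the lower-order radial drift $\tfrac{n-1}{r}|\tilde h_r|^{p-2}\tilde h_r$ appearing in
\[
\Delta_p \tilde h \;=\; \partial_r\bigl(|\tilde h_r|^{p-2}\tilde h_r\bigr) + \tfrac{n-1}{r}|\tilde h_r|^{p-2}\tilde h_r
\]
can be absorbed into the one-dimensional principal part. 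The intrinsic scaling condition $k\leq T^{1/(p-1)}H^{(p-2)/(p-1)}$, equivalently $k^{p-1}\leq TH^{p-2}$, is precisely the condition that, under the natural $p$-parabolic diffusion rate for a solution of amplitude $H$, a penetration of depth $k$ from the inner boundary is achievable within time $T$.

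Concretely, I would look for $\tilde h(x,t) = H\,W(|x|,t)$ with an ansatz of the form
\[
W(r,t) \;=\; 1 - \bigl(1 + \lambda(t)(r-1)\bigr)^{-\alpha},
\]
for a fixed exponent $\alpha > 0$ and a monotone amplitude $\lambda\colon[0,T]\to(0,\infty)$. The function $W$ vanishes at $r=1$, approaches $1$ as $\lambda(t)(r-1)$ becomes large, and has the crucial Lipschitz upper bound $W(r,T)\leq \alpha\lambda(T)(r-1)$ coming from concavity of $s\mapsto 1-(1+s)^{-\alpha}$. The first and main step is to verify the pointwise inequality $\partial_t\tilde h - \Delta_p \tilde h \geq 0$ in $N$, which, after substituting the ansatz and using the smallness of $k$ to dominate the radial drift, reduces to a first order ODE inequality for $\lambda(t)$; the hypothesis $k^{p-1}\leq TH^{p-2}$ is exactly what makes this ODE solvable on $[0,T]$ with $\lambda$ remaining finite and reaching the required size $\alpha\lambda(T)\sim 1/k$.

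The boundary and initial inequalities are then verified by inspection: $W(1,t)=0$ gives $\tilde h\geq 0$ on $\partial B_1\times(0,T]$, while taking $\lambda(0)$ very large (understood in the one-sided limit sense, since classical supersolutionality is only required in the interior of $N$) forces $W(\cdot,0)\equiv 1$, and the monotonicity of $\lambda$ together with the chosen $\alpha$ give $W\geq 1$ on $\partial B_{1+k}\times[0,T]$. Finally, the linear upper bound \Cref{tilde h upper} follows from
\[
\tilde h(r,T) \;\leq\; H\,\alpha\lambda(T)(r-1) \;\leq\; \frac{He^2}{k}(r-1),
\]
where the explicit prefactor $e^2$ is produced by an integrating-factor estimate quantifying the growth of $\lambda$ across $[0,T]$, analogous to the elementary identity \Cref{eq:barrier helper} used in \Cref{lembar}.

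The main obstacle will be the simultaneous tuning of $\alpha$ and $\lambda(t)$ so that the supersolution inequality, the three boundary inequalities, and the \emph{sharp} prefactor $e^2$ in \Cref{tilde h upper} all hold at once; indeed, the two smallness conditions defining $k_0$ are exactly what this balancing requires --- $k\leq (p-1)/n$ to suppress the radial drift arising from the curvature of the annulus, and $k\leq T^{1/(p-1)}H^{(p-2)/(p-1)}$ to ensure the ODE for $\lambda$ admits a finite, controlled solution on the full interval $[0,T]$.
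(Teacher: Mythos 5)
Your proposal heads in a genuinely different direction from the paper: the paper (following \cite[Theorem~4.1]{DKV}) takes the elementary exponential ansatz
\[
\tilde h(x,t) \;=\; H\,e^{2}\bigl(1 - v(x,t)\bigr),
\qquad
v(x,t) \;=\; \exp\Bigl(\tfrac{t-T}{T} - \tfrac{|x|-1}{k}\Bigr),
\]
for which $v_t$, $\nabla v$, and $\Delta_p v$ are all explicit multiples of powers of $v$, so the verification that $\tilde h_t - \Delta_p \tilde h \geq 0$ in $N$ is a one-line computation, the three boundary inequalities are immediate (since $v \leq e^{-1}$ on $\{t=0\}$ and on $\{|x|=1+k\}$, giving $\tilde h \geq He^2(1-e^{-1}) \geq H$ there), and the Lipschitz bound \cref{tilde h upper} drops out from $|\nabla\tilde h(\cdot,T)| \leq He^2/k$ and $\tilde h(1,T)=0$.

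Your power-law ansatz $W(r,t) = 1 - (1+\lambda(t)(r-1))^{-\alpha}$ has a structural flaw that an exponential ansatz avoids: since $(1+\lambda(t)(r-1))^{-\alpha} > 0$ for any finite $\lambda$ and $\alpha$, one has $W < 1$ strictly everywhere in $\overline N$, so the required outer boundary inequality $\tilde h = HW \geq H$ on $\partial B_{1+k}(0)\times[0,T]$ fails. You would have to take $\tilde h = cHW$ for some $c>1$ depending on $\alpha$ and the final value of $\lambda$, but this renormalization is absent from your sketch and feeds back into the prefactor in \cref{tilde h upper}. Separately, the proposed reduction to a clean ODE for $\lambda$ is not verified: after differentiating, the quantities $W_t$ and $\Delta_p W$ carry \emph{different} powers of $(1+\lambda(t)(r-1))$, namely $-\alpha-1$ and $-(\alpha+1)(p-1)$, and a factor of $(r-1)$ (or of $s=\lambda(r-1)$) that has no counterpart in $\Delta_p W$, so the inequality is not pointwise-equivalent to a condition on $\lambda$ alone without additional estimates. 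You flag this tuning as the "main obstacle," which is an accurate self-diagnosis: the proposal outlines a plausible barrier philosophy but leaves exactly the hard step undone, whereas the paper's exponential choice makes that step trivial. If you want a power-law construction in this paper's spirit, note that \Cref{lembar} already carries one out --- but for a \emph{sub}solution on a geometry where $W<1$ is the desired behavior, which is precisely why it works there and not here.
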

\begin{proof}
	This type of construction was originally carried out in \cite[Theorem~4.1]{DKV} and we here include a proof for completion. Let
	\begin{equation*}
		v(x,t)= \exp\left( \frac{t-T}{T} - \frac{|x|-1}{k} \right)\,,
	\end{equation*}
	and let
	\begin{equation*}
		\tilde h(x,t)= \tilde H (1-v(x,t))\,, \qquad \tilde H = H \exp(2)\,,
	\end{equation*}
	accordingly. Then
	\begin{equation*}
		\partial_t \tilde h(x,t) = -\frac{\tilde H}{T}v(x,t) \qquad \mbox{and} \qquad \nabla \tilde h(x,t) = \frac{\tilde H x}{k|x|} v(x,t)\,.
	\end{equation*}
	Observe also that $\tilde H v \geq H$ for all $(x,t) \in N$ and that $\tilde h$ satisfies the boundary conditions. We now show that $\tilde h$ is a classical supersolution in $N$. Indeed, by a straightforward calculation we see that
	\begin{align*}
		\tilde h_t - \Delta_p \tilde h &= - \frac{\tilde H}{T} v + \tilde H^{p-1} v^{p-1} \left ( (p-1)k^{-p} - k^{1-p} \frac{n-1}{|x|} \right ) \\
		& \geq \frac{\tilde H}{T} v \left (-1 + T H^{p-2} k_0^{1-p} + T H^{p-2} k^{1-p} ( (p-1) k_0^{-1} - n ) \right) \\
		& \geq 0\,,
	\end{align*}
	whenever $(x,t) \in N$. Finally, since
	\begin{equation*}
		 \sup_{1<|x|<1+k}|\nabla \tilde h(x,T)| \leq \frac{\tilde H}{k} \qquad \text{and} \qquad \tilde h(x,T) = 0 \quad \forall x \in \partial B_{1}(0)\,,
	\end{equation*}
	we obtain the upper bound for $\tilde h(x,T)$ as well.
\end{proof}


\section{Decay estimates and a change of variables}  \label{s.decay}


In this section we prove a lower bound (\Cref{ComparisonFunction}) and an upper bound (\Cref{SUniversalUpperBound1}) on the decay of solutions. The following lemma, which is a change of variables, will be used in the proof of our decay estimates. The proof of the lemma follows from \cite[Lemma 3.5]{Ku}.
\begin{lemma}
	\label{lemchange} Let $u=u(x,t)$ be $p$-parabolic in $\Omega\times (T_0,T_1)$. Let $C > 0$ be given and let
	\begin{align*}
		\label{aa1} f(\tau) &= \frac{1}{C(p-2)} \left ( \exp(C(p-2)\tau) - 1 \right)\,,\\
		g(\eta) & = (C(p-2)\eta + 1)^{\frac{1}{p-2}}\,,
	\end{align*}
	for $p>2$, and
	\begin{align*}
		f(\tau)= \tau,\ g(\eta) = \exp(C\eta)\,,
	\end{align*}
	for $p=2$. Let $w(x,\tau) = g(f(\tau)) u(x,f(\tau))$. Then $w(x,\tau)$ is a (weak) solution to the equation,
	\begin{equation*}
		\partial_\tau w = \Delta_p w + C w
	\end{equation*}
	in $\Omega\times(\tau_0,\tau_1)$ where $f(\tau_i)=T_i$, $i\in\{0,1\}$.
\end{lemma}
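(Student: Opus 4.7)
The plan is to reduce the statement to a direct verification built on one key simplification and two scaling identities. First I would observe that in both the $p>2$ and $p=2$ cases,
\begin{equation*}
	G(\tau) \;:=\; g(f(\tau)) \;=\; e^{C\tau},
\end{equation*}
so $w(x,\tau)=e^{C\tau}u(x,f(\tau))$ in all cases. Differentiating $G$ and $f$ gives the two identities driving everything: $G'(\tau)=CG(\tau)$ and $f'(\tau)=G(\tau)^{p-2}$ (for $p=2$ the second identity reads $f'=1=G^{0}$). The first produces the reaction term $Cw$, while the second is the precise exponent needed to make the $p$-Laplacian term transform covariantly under the rescaling $u\mapsto Gu$.

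Next I would run a formal classical calculation to expose the structure. Chain rule yields
\begin{equation*}
	\partial_\tau w \;=\; C G u + G f'\,\partial_t u, \qquad \nabla w \;=\; G\,\nabla u,
\end{equation*}
so $|\nabla w|^{p-2}\nabla w = G^{p-1}|\nabla u|^{p-2}\nabla u$ and therefore $\Delta_p w=G^{p-1}\Delta_p u$. Using $f'=G^{p-2}$ gives $Gf'=G^{p-1}$, so $\partial_\tau w-\Delta_p w-Cw = G^{p-1}\bigl(\partial_t u-\Delta_p u\bigr)=0$ wherever $u$ is classical.

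To carry this through in the weak sense, given any test function $\tilde\psi \in C_0^\infty(\Omega\times(\tau_0,\tau_1))$ I would define
\begin{equation*}
	\phi(x,t) \;:=\; G(\tau(t))\,\tilde\psi(x,\tau(t)), \qquad \tau(t):=f^{-1}(t),
\end{equation*}
which lies in $C_0^\infty(\Omega\times(T_0,T_1))$ since $f$ is a smooth bijection with $f'>0$. Plugging $\phi$ into the weak formulation for $u$ and changing variables $t=f(\tau)$ (so $dt=f'(\tau)\,d\tau$) I would substitute $u(x,f(\tau))=G^{-1}w$, $\nabla u=G^{-1}\nabla w$. The time term becomes
\begin{equation*}
	-u\,\partial_t\phi \;=\; -\tfrac{1}{f'}(G'\tilde\psi+G\tilde\psi_\tau)\,u \;=\; -\tfrac{G'}{G}w\tilde\psi - w\tilde\psi_\tau
\end{equation*}
after using $dt=f'd\tau$; by $G'/G=C$ this is $-Cw\tilde\psi-w\tilde\psi_\tau$. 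The spatial term picks up the factor $G^{-(p-1)}\cdot G\cdot f' = G^{-(p-2)}f' = 1$, which is exactly where the identity $f'=G^{p-2}$ is used, leaving $|\nabla w|^{p-2}\nabla w\cdot\nabla\tilde\psi$. Adding the two contributions produces the weak formulation of $\partial_\tau w=\Delta_p w+Cw$, completing the proof.

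I do not expect a real obstacle: the only nontrivial point is spotting the right test-function substitution $\phi=G\cdot\tilde\psi\circ\tau(\cdot)$ and the cancellation $G^{-(p-2)}f'=1$, which is built into the definitions of $f$ and $g$. Care is needed only in bookkeeping the $p=2$ degeneracy, but because $G(\tau)=e^{C\tau}$ in both cases, the two regimes are treated by literally the same computation.
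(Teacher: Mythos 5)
Your proof is correct and is the natural, self-contained argument. The paper itself does not write out a proof of this lemma but simply cites \cite[Lemma 3.5]{Ku}; your verification that $G(\tau):=g(f(\tau))=e^{C\tau}$, the two identities $G'=CG$ and $f'=G^{p-2}$ (covering $p=2$ as the limiting case $f'=1$), the pointwise cancellation $\partial_\tau w-\Delta_p w-Cw=G^{p-1}(\partial_t u-\Delta_p u)$, and the lift to the weak formulation via the test-function substitution $\phi=G(\tau(t))\,\tilde\psi(x,\tau(t))$ with the Jacobian $dt=f'\,d\tau$ exactly cancelling $G^{-(p-2)}$ in the gradient term, is precisely the computation the cited lemma rests on. The only small bookkeeping point worth making explicit in a final write-up is that $\phi$ is indeed admissible: $\tau\mapsto f(\tau)$ is a smooth increasing bijection of $(\tau_0,\tau_1)$ onto $(T_0,T_1)$, so $\phi\in C_0^\infty(\Omega\times(T_0,T_1))$, and since $G$ is smooth and positive on compact time intervals, $w$ inherits the local $L^p(W^{1,p})$ regularity from $u$.
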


\subsection{A lower bound on the decay} Using the classical subsolution constructed in \Cref{lembar} and the change of variable outlined in \Cref{lemchange} we here prove the following lemma, which describes the optimal decay rate from below after a certain intrinsic waiting time. This lemma will be crucial when proving global $C^{1,1}$-estimates, see \Cref{s.global} and when proving the local $C^{1,1}$-estimates, see \Cref{s.local}.

\begin{lemma}
	\label{ComparisonFunction} Let $0<\varrho \leq r/4$ and let $g \in L^2(B_{r}(x_0))$ be a non-negative function satisfying
	\begin{equation*}
		 \fint_{B_\varrho(x_0)} g \dx \geq \lambda>0\,.
	\end{equation*}
	Assume that $\hat h \in C([t_0,\infty);L^2(B_{r}(x_0)))$ is a weak non-negative solution solving the Cauchy problem
	\begin{equation*}
		\begin{cases}
			\hat h_t - \Delta_p \hat h = 0 &\text{ in }B_{r}(x_0)\times (t_0,\infty)\\
			\hat h = g &\text{ on } B_{r}(x_0) \times \{t_0\}\,.
		\end{cases}
	\end{equation*}
	Then there exist constants $c_i \equiv c_i (\varrho/r,n,p)$, $i\in \{1,2\}$, such that
	\begin{equation*}
		\hat h(x,t) \geq \frac{\lambda}{c_1} \left ( c_1(p-2) \frac{t-t_0}{\lambda^{2-p}r^p} +1 \right )^{- \frac{1}{p-2}} \frac{d(x, \partial B_{r}(x_0))}{r}\,,
	\end{equation*}
	whenever $(x,t) \in B_{r}(x_0) \times (t_0 + c_2 \lambda^{2-p}r^p,\infty)$. Furthermore, constants $c_i$, $i \in \{1,2\}$, are stable as $p \to 2^+$.
\end{lemma}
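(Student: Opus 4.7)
The proof strategy combines \Cref{lembar} (classical subsolution giving the linear-in-distance-to-boundary profile) with \Cref{lemchange} (change of variables absorbing the $t^{-1/(p-2)}$ decay into a uniform-in-time lower bound), after an initial waiting time obtained from the weak Harnack chains of \Cref{s.Harnack}.

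By the standard scaling $(y, \tau, u) \mapsto (r y + x_0, \lambda^{2-p} r^p \tau + t_0, \lambda u)$ of the $p$-parabolic equation I reduce to the normalized setting $x_0 = 0$, $t_0 = 0$, $r = 1$, $\lambda = 1$. Fix $a \in (0, 1)$ and the corresponding $\varrho_0 \in (1, 2)$ of \Cref{lembar}. A bounded iteration of \Cref{FChainMeas}, the number of steps depending only on $\varrho/r$, yields constants $c_0, \tau_0 > 0$ depending only on $p, n, \varrho$, with $\hat h(\cdot, \tau_0) \geq c_0$ on $B_{1/\varrho_0}(0)$.

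Introduce the scaled barrier $\tilde h_M(y, \tau) := M\, h(\varrho_0 y, \varrho_0^p M^{p-2}\tau)$, a classical $p$-parabolic subsolution in the annular cylinder $(B_1 \setminus \overline{B_{1/\varrho_0}}) \times (0, T_M)$ with $T_M = T/(\varrho_0^p M^{p-2})$, vanishing on $\partial B_1$ and initially, bounded by $M$ on the inner sphere $\partial B_{1/\varrho_0}$, and satisfying $\tilde h_M(y, T_M) \geq c_\star M(1 - |y|)$ on the annulus by radial integration of the gradient bound from \Cref{lembar}. Applying \Cref{TuomoWH} to $\hat h$ with $\fint_{B_{1/\varrho_0}} \hat h(\cdot, \tau_0) \geq c_0$ gives $\hat h \geq c_0/(2C_2)$ on $B_{2/\varrho_0}(0) \cap B_1$ throughout a time interval of length $\propto c_0^{2-p}$. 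Choosing $M$ a sufficiently small multiple of $c_0$ so that $T_M$ fits inside this interval (and the lateral condition $\hat h \geq M$ on $\partial B_{1/\varrho_0}$ holds), comparison yields $\hat h(y, t_1) \geq c_\star M\, d(y, \partial B_1)$ on the annulus at some time $t_1 = t_1(p, n, \varrho)$; combined with the pointwise interior bound in $B_{1/\varrho_0}$ (and $d(y, \partial B_1) \leq 1 - 1/\varrho_0$ there), this gives $\hat h(y, t_1) \geq c' d(y, \partial B_1)$ on all of $B_1$.

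To extend this spatial profile to all later times, shift time so that $t_1 = 0$ and apply \Cref{lemchange} with $C = C_* := \mu_1$, the first Dirichlet eigenvalue of $-\Delta_p$ on $B_1$. Writing $w(y, \tau) := g(f(\tau))\, \hat h(y, f(\tau))$, the function $w$ satisfies $\partial_\tau w - \Delta_p w = \mu_1 w$ weakly in $B_1 \times (0, \infty)$ with $w(\cdot, 0) \geq c' d(\cdot, \partial B_1)$. Letting $\phi_1 \geq 0$ denote the corresponding first Dirichlet eigenfunction, which by its radial structure satisfies $\phi_1(y) \geq c'' d(y, \partial B_1)$, the stationary function $V = A\phi_1$ with $A$ small enough that $V(\cdot, 0) \leq w(\cdot, 0)$ is a solution of $\partial_\tau V = \Delta_p V + \mu_1 V$ vanishing on $\partial B_1$, so comparison yields $w \geq A\phi_1 \geq A c'' d(\cdot, \partial B_1)$ for all $\tau \geq 0$. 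Undoing the change of variables produces $\hat h(y, t) \geq A c''\, d(y, \partial B_1)/(\mu_1 (p-2)t + 1)^{1/(p-2)}$, which is the claim with $c_1 \sim \mu_1$ and $c_2$ absorbing the initial waiting times. The main obstacle is the constant-tuning in the barrier argument, namely forcing $T_M$ to fit inside the weak-Harnack interval while preserving the stability as $p \to 2^+$ that the statement demands; a secondary point is the verification of the Hopf-type lower bound $\phi_1(y) \gtrsim d(y, \partial B_1)$, which on the ball follows from explicit ODE analysis of the radial eigenvalue problem.
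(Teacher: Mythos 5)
Your opening moves (scaling to the unit ball, feeding mass to an interior ball via \Cref{FChainMeas}, invoking \Cref{lemchange} and the \Cref{lembar} barrier) track the paper's proof, but the concluding argument diverges: the paper first establishes a \emph{uniform-in-time} lower bound for $w$ on $B_1$ (via \Cref{TuomoWH} together with additional results from \cite{Ku}) and then slides the subsolution $h$ of \Cref{lembar} in time, whereas you pin the linear profile at a single time and then try to propagate it with a stationary first eigenfunction. That is a genuinely different route, but as written it has two gaps.

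First, your application of \Cref{TuomoWH} is not admissible. You invoke it at scale $r = 1/\varrho_0$ centered at $0$ inside the domain $B_1$, but \Cref{TuomoWH} requires $B_{4r}(x_0)$ to be contained in the domain, and here $4/\varrho_0 > 1$ for every $\varrho_0 \leq 2$, so $B_{4/\varrho_0}(0) \not\subset B_1$. The claimed bound on $B_{2/\varrho_0}(0)\cap B_1$, over a full time interval, therefore does not follow. To cover a thickening of $\partial B_{1/\varrho_0}$ you would need either an iterated chain of small admissible balls (i.e., a genuine application of \Cref{FChainMeas}, with the time window controlled by truncations as in its proof), or the device the paper uses of rescaling \emph{first} so that the relevant inner ball sits in $B_{\varrho_0}$ and then working with $w$ as a supersolution. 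The paper's uniformity in time relies on more than \Cref{TuomoWH} alone (it cites Corollary 3.6 and Proposition 3.1 of \cite{Ku}), which you do not reproduce.

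Second, your claim that $V = A\phi_1$ is a stationary \emph{solution} of $\partial_\tau V = \Delta_p V + \mu_1 V$ is false for $p > 2$. The Dirichlet eigenvalue problem for the $p$-Laplacian is $-\Delta_p \phi_1 = \mu_1 \phi_1^{p-1}$, not $-\Delta_p\phi_1 = \mu_1\phi_1$; hence, by $(p-1)$-homogeneity of $\Delta_p$,
\begin{equation*}
	\Delta_p(A\phi_1) + \mu_1 (A\phi_1) \;=\; \mu_1\,A\,\phi_1\,\bigl(1 - (A\phi_1)^{p-2}\bigr)\,,
\end{equation*}
which is not identically zero. Fortunately, this quantity is $\geq 0$ once $A\|\phi_1\|_\infty \leq 1$, so $A\phi_1$ \emph{is} a stationary subsolution for small $A$, and a subsolution is all the comparison principle requires. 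But the stated reasoning is wrong, and the statement should be corrected: choose $A$ so small that both $A\|\phi_1\|_\infty \leq 1$ (subsolution property) and $A\phi_1 \leq w(\cdot,0)$ (initial comparison) hold — the latter using that $\phi_1 \leq C\,d(\cdot,\partial B_1)$ by $C^{1,\alpha}$ regularity, not merely the Hopf lower bound. With these corrections, and with the first gap filled by a more careful chaining argument, your eigenfunction route can be made rigorous; as written it is not.
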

\begin{proof}
	After scaling we may assume that $x_0=0$, $t_0=0$, $r=1$, and $\lambda=1$. Let
	\begin{align*}
		\label{rho_0 2} \varrho_0&:= \min\left \{ \frac12 \frac{p}{n (p-2)} + 1 \,, 2\right\}^{\frac{p-1}{p}}\,.
	\end{align*}
	Applying \Cref{FChainMeas} we find a time $t^\filledstar \equiv t^\filledstar(n,p,\varrho/r)$ and a constant $c^\filledstar \equiv c^\filledstar(n,p,\frac{\varrho}{r})$ such that
	\begin{equation*}
		 \hat h(x,t^\filledstar) \geq 1/c^\filledstar \qquad \forall x \in B_{1/\varrho_0}(0)\,.
	\end{equation*}
	Set $\bar h(x,t) = c^\filledstar \hat h(x/\varrho_0, t^\filledstar + t [c^{\filledstar}]^{p-2}/\varrho_0^p )$ and let $w(x,\tau) = g(f(\tau)) \bar h(x, f(\tau))$, where $g$ and $f$ are defined as in \Cref{lemchange}. Then $w(x,\tau)$ is a non-negative weak solution to the equation
	\begin{equation*}
		\partial_\tau w = \Delta_p w + C w
	\end{equation*}
	in $B_{\varrho_0}(0)\times (0,\infty)$ and $w(x,0) \geq 1$ for all $x \in B_{1}(0)$. In particular, $w$ is a weak supersolution in $B_{\varrho_0}(0)\times (0,\infty)$. Now, \Cref{TuomoWH}, \cite[Corollary 3.6]{Ku}, \Cref{lemchange}, and \cite[Proposition 3.1]{Ku} imply that we have, for a new constant $\bar c \equiv \bar c(n,p) > 1$,
	\begin{equation}
		\label{lowbdd} w (x,\tau) \geq 1/\bar c\,, \quad (x,\tau) \in B_{1}(0) \times (0,\infty)\,,
	\end{equation}
	provided we choose $C$ large enough in the definitions of $f$ and $g$ in \Cref{lemchange}. Consider $\hat \tau \geq 0$ arbitrary, let $h$ be the classical subsolution of \Cref{lembar} and let $T$ be as in \Cref{lembar}. Then, simply using the intrinsic scaling, the comparison principle, and \Cref{lowbdd} we see that
	\begin{equation} \label{eq:lowforeverbound}
		w(x,\tau) \geq \frac{1}{\bar c}h(x,\bar c^{2-p}(\tau - \hat \tau))
	\end{equation}
	whenever $(x,\tau)\in (B_{\varrho_0}(0) \setminus B_{1}(0)) \times (\hat \tau,\hat \tau+\bar c^{p-2} T)$. Since $\hat \tau \geq 0$ was arbitrary we get from \Cref{eq:lowforeverbound,lembar} that there is a $c \equiv c(n,p)$ such that
	\begin{equation}
		\label{bound} w(x,\tau) \geq \frac{1}{c} d(x, \partial B_{\varrho_0}(0))\,, \quad (x,\tau) \in B_{\varrho_0}(0) \times (\bar c^{p-2} T,\infty)\,.
	\end{equation}
	To complete the proof, it suffices to rephrase \Cref{bound} in terms of $\hat h(x,t)$.
\end{proof}


\subsection{An upper bound on the decay}

Working with solutions vanishing on the entire lateral boundary, we will make use of the following decay estimates.
\begin{lemma}
	\label{Tuomo1} Let $u \in C ([0,T);L^2(\Omega)) \cap L^{p}(0,T; W_{0}^{1,p}(\Omega))$ be a non-negative weak subsolution in $\Omega \times (0,T)$. Then there exists a constant $c=c(n,p,|\Omega|)$ such that
	\begin{equation*}
		\sup_{\Omega} u(\cdot, t) \leq \frac{c}{t^{n/\sigma}} \left (\fint_{\Omega} u(x,0) \dx \right )^{p/\sigma},\quad \sigma= n(p-2) + p\,.
	\end{equation*}
	The constant $c$ is stable as $p \to 2^+$.
\end{lemma}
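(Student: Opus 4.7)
The plan is to prove this $L^1$--$L^\infty$ smoothing estimate by combining an $L^1$-contraction for non-negative subsolutions with zero lateral trace together with a Moser-type iteration built on an energy--Sobolev differential inequality; the target exponent $\sigma=n(p-2)+p$ is exactly the dimensional scaling of the $p$-parabolic equation and will emerge naturally from the exponent bookkeeping. First, I would check that $t\mapsto\int_\Omega u(x,t)\dx$ is non-increasing by testing $\partial_t u-\Delta_p u\le 0$ against a cut-off equal to one on $\{d(\cdot,\partial\Omega)>\varepsilon\}$, integrating by parts, using $u\ge 0$ together with the zero lateral trace (which makes the boundary integral of $|\nabla u|^{p-2}\partial_\nu u$ non-positive), and letting $\varepsilon\to 0$; this yields $\|u(\cdot,t)\|_{L^1(\Omega)}\le L:=\|u(\cdot,0)\|_{L^1(\Omega)}$.

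Next, I would derive the basic differential inequality. Testing the subsolution against $u^{q-1}$ for $q\ge 2$ (using Steklov averages for the time derivative), together with the identity $u^{q-2}|\nabla u|^p=(p/(q+p-2))^p|\nabla u^{(q+p-2)/p}|^p$, and invoking Sobolev--Poincar\'e (legitimate since $u^{(q+p-2)/p}$ inherits the zero trace), yields
\begin{equation*}
\frac{d}{dt}\int_\Omega u^q\dx+C_{p,q}\left(\int_\Omega u^r\dx\right)^{(n-p)/n}\le 0, \qquad r=\frac{(q+p-2)n}{n-p},
\end{equation*}
when $p<n$, with the obvious modification via higher Sobolev embeddings when $p\ge n$. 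H\"older interpolation between $L^1$ (controlled by $L$) and $L^r$ converts this into a Bernoulli-type ODE $Y_q'(t)\le -\tilde C_{p,q}L^{-B_q}Y_q(t)^{A_q}$ for $Y_q(t):=\int_\Omega u(x,t)^q\dx$, where a direct calculation gives the crucial identity $A_q-1=\sigma/(n(q-1))$ together with $B_q=(qp+n(p-2))/(n(q-1))$. Integrating the ODE produces the polynomial decay
\begin{equation*}
\|u(\cdot,t)\|_{L^q}^q\le c_q\,t^{-n(q-1)/\sigma}\,L^{(qp+n(p-2))/\sigma}.
\end{equation*}

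Finally, I would promote this $L^q$-bound to the $L^\infty$-bound by a Moser iteration on a dyadic sequence $q_k=2^k q_0$ applied to shrinking time-slices $[t/2^{k+1},t/2^k]$: at each stage the $L^{q_k}$ control from the previous step, used as initial data on the next slice via the energy--Sobolev cycle, produces an $L^{q_{k+1}}$ bound on the shorter interval. Tracking the exponents through the iteration shows that the composite time power telescopes to $n/\sigma$ and the composite $L$-power to $p/\sigma$, matching the claimed estimate. The main obstacle is precisely this bookkeeping: one must arrange the geometric factor in the time slicing so that the product of constants $c_{q_k}^{1/q_k}$ converges rather than diverges (a direct limit $q\to\infty$ in the ODE estimate fails, since $A_q-1\to 0$ forces $c_q^{1/q}\to\infty$), and the factors $(p/(q+p-2))^p$, the Sobolev constant, and the exponent $\sigma\to 2$ must all remain controlled as $p\to 2^+$ so that the bound degenerates to the classical heat-equation smoothing $\|u(\cdot,t)\|_\infty\lesssim t^{-n/2}\fint_\Omega u(x,0)\dx$.
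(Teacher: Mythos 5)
Your proposal is correct in substance, but it takes a genuinely different route from the paper. The paper proves this lemma in one line: it cites the $L^1$--$L^\infty$ smoothing estimate from Kuusi--Parviainen \cite[Lemma 3.24]{KuPa} and then combines it with $L^1$-contractivity and the comparison principle; the quantitative content you rebuild by Moser iteration is thus simply outsourced to an existing reference. Your exponent bookkeeping is right --- I checked that with $r=(q+p-2)n/(n-p)$ one indeed gets $A_q-1=\sigma/(n(q-1))$ and $B_q=(qp+n(p-2))/(n(q-1))$, so the $L^q$-bound $\|u(\cdot,t)\|_q\lesssim t^{-n(q-1)/(q\sigma)}L^{(qp+n(p-2))/(q\sigma)}$ is correct, and your observation that the naive $q\to\infty$ limit fails because $c_q^{1/q}\to\infty$ (forcing a genuine Moser iteration on shrinking time slices) is exactly the right diagnosis. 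What your route buys is self-containment and transparency about the origin of $\sigma=n(p-2)+p$ and of the $p$-stability as $p\to2^+$; what the paper's route buys is brevity and reliance on machinery already established in the literature.

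One small point of caution in your first step: arguing $L^1$-contractivity by a spatial cut-off $\psi_\varepsilon$ and invoking ``the boundary integral of $|\nabla u|^{p-2}\partial_\nu u$ is non-positive'' is heuristic for a weak subsolution, which has no pointwise normal derivative; making the sign of $\int|\nabla u|^{p-2}\nabla u\cdot\nabla\psi_\varepsilon$ rigorous requires some care. The cleaner device, and the one the paper itself flags, is to test with $\phi=\min(1,u/\epsilon)$: since $u\in L^p(0,T;W_0^{1,p}(\Omega))$ this is admissible, its gradient is $\tfrac1\epsilon\nabla u\,\chi_{\{u<\epsilon\}}$, and the diffusion term becomes $\tfrac1\epsilon\int|\nabla u|^p\chi_{\{u<\epsilon\}}\ge 0$ with a manifestly correct sign, after which letting $\epsilon\to0$ gives $t\mapsto\|u(\cdot,t)\|_{L^1(\Omega)}$ non-increasing. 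This is a fixable detail, not a gap in the overall strategy.
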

\begin{proof}
	See \cite[Lemma 3.24]{KuPa} and use the $L^1$-contractivity (test with the function $\min(1,u/\epsilon)$) and the comparison principle.
\end{proof}
In the following lemma we describe the optimal decay of the supremum of a solution vanishing on the entire lateral boundary, it follows from an iterative rescaling and comparison together with the decay estimate in \Cref{Tuomo1}.
\begin{lemma}
	\label{SUniversalUpperBound1} Let $u \in C ([0,T);L^2(\Omega))  \cap L^{p}(0,\infty; W_{0}^{1,p}(\Omega))$ be a non-negative weak subsolution in $\Omega \times (0,\infty)$. Then there exist constants $c_i\equiv c_i(p,n,|\Omega|)$, $i\in\{1,2\}$, such that the following holds. Let
	\begin{equation*}
		\bar\Lambda := \fint_{\Omega} u(x,0) \dx \,,
	\end{equation*}
	then
	\begin{equation*}
		\sup_{\Omega} u(\cdot, t) \leq c_1 \left ((p-2)\frac{\bar\Lambda^{p-2} }{c_1} t +1 \right )^{-\frac{1}{p-2}} \bar\Lambda \,,
	\end{equation*}
	whenever $t> c_2 \bar\Lambda^{2-p}$. The constants $c_i$, $i\in\{1,2\}$, are stable as $p \to 2^+$.
\end{lemma}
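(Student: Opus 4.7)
The plan is to iterate \Cref{Tuomo1}, resetting the initial time at each step and exploiting the monotone decay of $\sup_\Omega u(\cdot,t)$ to produce a geometric series of waiting times whose inversion yields the self-similar rate $\bigl(1+c(p-2)\bar\Lambda^{p-2}t\bigr)^{-1/(p-2)}$.

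First I observe that $t\mapsto \sup_\Omega u(\cdot,t)$ is non-increasing: for any $s>0$, the constant $C:=\sup_\Omega u(\cdot,s)$ is a global supersolution whose lateral and initial data dominate those of $u$ on $\Omega\times[s,\infty)$, so the comparison principle gives $u\leq C$ there. Next, applying \Cref{Tuomo1} and evaluating at $t_1:=c_2\bar\Lambda^{2-p}$, the exponent identity $n(p-2)/\sigma+p/\sigma=1$ (where $\sigma=n(p-2)+p$) makes the $\bar\Lambda$ powers collapse to one, yielding
$$\sup_\Omega u(\cdot,t_1) \leq c\,c_2^{-n/\sigma}\bar\Lambda,$$
and choosing $c_2$ large enough (with a $p$-stable choice guaranteed by the stability of the constant in \Cref{Tuomo1}) produces $\sup_\Omega u(\cdot,t_1)\leq\alpha\bar\Lambda$ for some $\alpha\in(0,1)$ depending only on $p,n,|\Omega|$.

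I then iterate. Suppose inductively $\sup_\Omega u(\cdot,t_k)\leq\Lambda_k:=\alpha^k\bar\Lambda$. Since $u(\cdot,t_k)\in L^2(\Omega)$ has mean at most $\Lambda_k$, shifting the time origin to $t_k$ and reapplying \Cref{Tuomo1} shows that after a waiting time $\Delta t_k:=c_3\Lambda_k^{2-p}=c_3\alpha^{-k(p-2)}\bar\Lambda^{2-p}$ we have $\sup_\Omega u(\cdot,t_k+\Delta t_k)\leq\Lambda_{k+1}$. Set $t_{k+1}:=t_k+\Delta t_k$; the geometric sum then gives
$$t_k = c_2\bar\Lambda^{2-p}+c_3\bar\Lambda^{2-p}\,\frac{\alpha^{-k(p-2)}-1}{\alpha^{-(p-2)}-1}.$$
For $t\in[t_k,t_{k+1})$, monotonicity gives $\sup_\Omega u(\cdot,t)\leq\alpha^k\bar\Lambda$, and inverting the preceding display for $\alpha^k$ as a function of $t$ produces the claimed bound after absorbing constants into $c_1$.

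The only subtle point is $p$-stability. Individually, $\alpha^{-(p-2)}-1$ and $(p-2)^{-1}$ degenerate as $p\to 2^+$, but the combination $(\alpha^{-(p-2)}-1)/(p-2)\to -\log\alpha$ is finite and positive in the limit. This forces the bound into the specific shape $c_1\bigl((p-2)\bar\Lambda^{p-2}t/c_1+1\bigr)^{-1/(p-2)}\bar\Lambda$, which in the limit $p\to 2^+$ collapses to the exponential decay $c_1 e^{-t/c_1}\bar\Lambda$ expected for the heat equation on a bounded domain with zero boundary data; this is the main bookkeeping obstacle, but otherwise the argument is straightforward iteration.
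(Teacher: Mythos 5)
Your proof is correct and follows essentially the same strategy as the paper: a single application of \Cref{Tuomo1} to produce a fixed multiplicative drop after an intrinsic waiting time $\sim \bar\Lambda^{2-p}$, followed by iteration with rescaled waiting times $\sim \Lambda_k^{2-p}$ forming a geometric series, monotonicity of $t\mapsto\sup_\Omega u(\cdot,t)$ via comparison, and inversion of the accumulated waiting time to obtain the self-similar rate; the $p$-stability observation $(\alpha^{-(p-2)}-1)/(p-2)\to-\log\alpha$ matches the paper's use of $2^{p-2}-1\geq\log(2)(p-2)$. The only cosmetic differences are that you use a general decay factor $\alpha\in(0,1)$ where the paper fixes $\alpha=1/2$, and you apply \Cref{Tuomo1} directly to the subsolution rather than routing through the solution with the same initial data.
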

\begin{proof}
	Let $w=w(x,t)$ solve the Dirichlet problem
	\begin{equation}
		\label{SimpleDirichlet}
		\begin{cases}
			w_t - \Delta_p w = 0 &\text{ in }\Omega \times (0,\infty)\\
			w = 0 &\text{ on } \partial \Omega \times [0,\infty)
		\end{cases}
	\end{equation}
	and assume that
	\begin{align}
		\label{norm}\fint_{\Omega} w(x,0) \dx\leq 1 \,.
	\end{align}
	Applying \Cref{Tuomo1} to $w$ we see that
	\begin{equation*}
		\label{Tstarsmaller1} \sup_{\Omega} w(\cdot, t) \leq \frac{c}{t^{n/\sigma}} \,,
	\end{equation*}
	for some $c=c(n,p,|\Omega|)$, and for all $t>0$. In particular, there exists $t^\filledstar=t^\filledstar(n,p,|\Omega|)>0$ such that
	\begin{equation}
		\label{Tstarsmaller} \sup_{\Omega} w(\cdot, t^\filledstar) \leq \frac12\,.
	\end{equation}
	To prove the lemma we will now use \Cref{Tstarsmaller} in an iterative argument. In particular, consider the function
	\begin{equation*}
		w_1(x,t) := \bar\Lambda^{-1} u (x,\bar\Lambda^{2-p} t) \,.
	\end{equation*}
	Then $w_1$ is a solution to \Cref{SimpleDirichlet} satisfying \Cref{norm}. Hence we have by \Cref{Tstarsmaller} that
	\begin{equation*}
		w_1(x,t^\filledstar) \leq \frac{1}{2} \,,
	\end{equation*}
	which after scaling back becomes
	\begin{equation*}
		u(x, \bar\Lambda^{2-p} t^\filledstar) \leq 2^{-1}\bar\Lambda \quad \text{whenever} \quad x\in\Omega \,.
	\end{equation*}
	Next, consider the function
	\begin{equation*}
		w_2(x,t) := (2^{-1} \bar\Lambda)^{-1}{u} (x, \bar \Lambda^{2-p} t^\filledstar + (2^{-1} \bar\Lambda)^{2-p} t) \,,
	\end{equation*}
	which again satisfies \Cref{SimpleDirichlet,norm}.
	Applying \Cref{Tstarsmaller} to the function $w_2$ we deduce, by elementary manipulations, that
	\begin{equation*}
		\sup_{\Omega} u(\cdot, (\bar\Lambda^{2-p}+(2^{-1} \bar\Lambda)^{2-p}) t^\filledstar) \leq 2^{-2}\bar\Lambda \,.
	\end{equation*}
	Proceeding inductively we deduce that
	\begin{equation*}
		\sup_{\Omega} u(\cdot, t_j) \leq 2^{-j} \bar\Lambda \,,
	\end{equation*}
	where
	\begin{equation*}
		t_j := t^\filledstar \bar\Lambda^{2-p} \sum_{k=0}^{j-1} 2^{k(p-2)}\,,\quad j\in\{1,2,...\}\,.
	\end{equation*}
	To complete the argument consider $t\in (t_1,\infty)$ and let $j$ be the largest $j$ such that $t_j\leq t$. Then, by the comparison principle and by construction,
	\begin{equation*}
		\sup_{x\in \Omega} u(\cdot, t) \leq 2^{-j} \bar\Lambda
	\end{equation*}
	and
	\begin{equation*}
		t_j\leq t<t_{j+1}\,.
	\end{equation*}
	Since
	\begin{equation*}
		 2^{-j} = \left(\frac{(2^{p-2}-1)}{t^\filledstar} \bar \Lambda^{p-2} t_j + 1 \right)^{-\frac1{p-2}}\,,
	\end{equation*}
	and $2^{p-2}-1 \geq \log(2) (p-2) $, by retracing the argument we derive the conclusion of the lemma. Furthermore, the constants $c_i$, $i\in\{1,2\}$, are stable as $p \to 2^+$. In particular, we see that
	\begin{equation*}
		\left ((p-2)c_1^{-1}(n,p,|\Omega|) \bar\Lambda^{p-2} t+1 \right )^{-\frac{1}{p-2}} \to \exp \left (-c_1^{-1}(n,2,|\Omega|) t \right )\,.
	\end{equation*}
	This completes the proof of the lemma.
\end{proof}


\section{Global estimates in $C^{1,1}$-domains} \label{s.global}
In this section we combine the optimal decay estimate established in \Cref{SUniversalUpperBound} together with the barrier function in \Cref{UpperBarrier}, to obtain the sharp decay estimate from above. Note that taking the initial data to be $+\infty$ allows us to see that this is sharp with respect to the so called ``friendly giant'', see for example \cite{KLP}. 
\begin{lemma}
	\label{SUniversalUpperBound} Let $\Omega\subset\R^n$ be a bounded $C^{1,1}$-domain satisfying the ball condition with radius $r_0$. Let $u \in C ([0,\infty);L^2(\Omega)) \cap L^{p}(0,\infty; W_{0}^{1,p}(\Omega))$ be a non-negative $p$-parabolic function in $\Omega \times (0,\infty)$. Let
	\begin{equation*}
		\bar\Lambda := \fint_{\Omega} u(x,0) \dx\,.
	\end{equation*}
	Then there exists $C_i \equiv C_i(\diam \Omega/r_0,n,p)$, $i \in \{1,2\}$, such that
	\begin{equation*}
		u(x, t) \leq C_1 \bar\Lambda \left ((p-2)C_1^{-1} \bar\Lambda^{p-2} t +1 \right )^{ -\frac{1}{p-2}} d(x, \partial \Omega)
	\end{equation*}
	whenever $t > C_2 \bar\Lambda^{2-p}$. Furthermore, constants $C_i$, $i \in \{1,2\}$, are stable as $p \to 2^+$.
\end{lemma}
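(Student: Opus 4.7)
My plan is to combine the global $L^\infty$-decay from \Cref{SUniversalUpperBound1} with a boundary barrier derived from \Cref{UpperBarrier}, exploiting the exterior ball condition at every point of $\partial\Omega$. Applied to $u$ (a subsolution, since it is a solution), \Cref{SUniversalUpperBound1} gives
\begin{equation*}
\sup_\Omega u(\cdot,s) \,\le\, H(s) \,:=\, c_1\bar\Lambda\bigl[(p-2)c_1^{-1}\bar\Lambda^{p-2}s+1\bigr]^{-1/(p-2)},\qquad s>c_2\bar\Lambda^{2-p},
\end{equation*}
which already carries the correct time-decay factor. What remains is to introduce the linear factor $d(x,\partial\Omega)$, which will come from a pointwise comparison against a boundary barrier.

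To set this up, fix $t>C_2\bar\Lambda^{2-p}$ with $C_2$ large (depending on $n,p,\diam\Omega/r_0$) and pick a waiting time
\begin{equation*}
\tau \,:=\, \alpha\, r_0^p\, H(t)^{2-p}
\end{equation*}
for a small universal constant $\alpha=\alpha(n,p)>0$. This self-referential scaling is the key choice: with $T:=\tau/r_0^p$ and $H:=H(t-\tau)$, the admissibility constraint $k\le\min\{(p-1)/n,\,T^{1/(p-1)}H^{(p-2)/(p-1)}\}$ of \Cref{UpperBarrier} reduces to $k\lesssim 1$, so a universal $k=k(n,p)\in(0,1]$ works, stably in $p$. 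The same scaling makes the elementary monotonicity estimate $H(t-\tau)\le 2H(t)$ hold with a $p$-stable constant.

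Now, for any $x\in\Omega$ with $0<d(x,\partial\Omega)\le kr_0$, let $x_1\in\partial\Omega$ be the nearest boundary point and $B_{r_0}(x^-)$ the exterior tangent ball provided by \Cref{defBall}. The collinearity built into \Cref{defBall} forces $|x-x^-|=r_0+d(x,\partial\Omega)$, so $|x-x^-|-r_0=d(x,\partial\Omega)$ exactly. Translating and rescaling the supersolution $\tilde h$ of \Cref{UpperBarrier} through
\begin{equation*}
\hat h(y,s) \,:=\, \tilde h\!\left(\tfrac{y-x^-}{r_0},\, \tfrac{s-t+\tau}{r_0^p}\right)
\end{equation*}
produces a classical supersolution of \Cref{basic eq} on $\mathcal{A}\times(t-\tau,t)$ with $\mathcal{A}:=\{r_0<|y-x^-|<(1+k)r_0\}$. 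It dominates $u$ on the parabolic boundary of $\mathcal{A}\cap(\Omega\times(t-\tau,t))$: on the inner sphere $\hat h\ge0=u$, since $u$ has zero lateral trace; on the outer sphere and the initial slice $\hat h\ge H(t-\tau)\ge u$ by the first step. Comparison then yields $u\le\hat h$ throughout, and \Cref{tilde h upper} together with the collinearity identity above gives
\begin{equation*}
u(x,t) \,\le\, \frac{H(t-\tau)\,e^2}{k r_0}\, d(x,\partial\Omega).
\end{equation*}
For $x\in\Omega$ with $d(x,\partial\Omega)>kr_0$ the trivial interior bound $u(x,t)\le H(t)\le H(t)\,d(x,\partial\Omega)/(kr_0)$ closes the argument, with the ratio $\diam\Omega/(kr_0)$ absorbed into the final constant $C_1$.

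The main technical point is to keep every constant $p$-stable as $p\to 2^+$ and to make the threshold $C_2$ depend only on $\diam\Omega/r_0$, $n$, $p$. The self-referential choice of $\tau$ is engineered precisely to balance the admissibility constraint of \Cref{UpperBarrier} against the change of $H(\cdot)$ over an interval of length $\tau$, and all other constants inherit their $p$-stability directly from \Cref{SUniversalUpperBound1,UpperBarrier}.
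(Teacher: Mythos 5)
Your proposal is correct and takes essentially the same route as the paper: apply \Cref{SUniversalUpperBound1} to get the $L^\infty$-decay, then use the exterior ball condition and the barrier from \Cref{UpperBarrier} to trade that bound for the linear factor $d(x,\partial\Omega)$. The paper parametrizes the comparison window forward from an arbitrary $\hat t>c_2$ with $T=\bar\Lambda_u(\hat t)^{2-p}$, $H=\bar\Lambda_u(\hat t)$ (so that $T^{1/(p-1)}H^{(p-2)/(p-1)}=1$ and $k_0$ is immediately universal), while you parametrize it backward from the target time $t$ with $\tau=\alpha r_0^p H(t)^{2-p}$; these are the same up to a bounded factor. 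Two small points of care: your sentence about the ``inner sphere'' actually refers to $\partial\Omega\cap\mathcal{A}$ (the set $\partial B_{r_0}(x^-)\cap\Omega$ is empty, so there is literally nothing to check there), and the smallness of $\alpha$ needed for $H(t-\tau)\le 2H(t)$ depends on $c_1$ from \Cref{SUniversalUpperBound1}, hence on $\diam\Omega/r_0$ in addition to $n,p$ — which is consistent with the allowed dependence of $C_1,C_2$ but slightly stronger than your stated $\alpha(n,p)$.
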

\begin{proof}
	By scaling we can without loss of generality assume that $\bar\Lambda=1$. Let $\hat x_0 \in \partial \Omega$ be an arbitrary point. Assume, for simplicity that $a^-_{2r_0}(\hat x_0) = 0$, where $a^-_{2r_0}(\hat x_0)$ is the exterior corkscrew point as in \Cref{ballNTAcork}.
	Consider an arbitrary number $\hat t$, such that $\hat t > c_2$ (where $c_2$ is from \Cref{SUniversalUpperBound1}). Then, using \Cref{SUniversalUpperBound1} we see that
	\begin{align}\label{eq:t hat upper}
		u(x,t) \leq c_1 \left (\frac{p-2}{c_1} \hat t+1 \right )^{\frac{1}{2-p}} =: \bar\Lambda_u(\hat t)\,, \qquad x \in \Omega\,, t \geq \hat t\,.
	\end{align}
	Construct the function $\tilde h$ in \Cref{UpperBarrier} with the choices ($T = \bar \Lambda_u^{2-p}$, $H = \bar \Lambda_u$), then $k_0$ from \Cref{UpperBarrier} simplifies to
	\begin{equation*}
		k_0 = \min \left \{ \bar \Lambda_u^{\frac{2-p}{p-1}} \bar \Lambda_u^{\frac{p-2}{p-1}},\frac{p-1}{n} \right \} = \min \left \{1,\frac{p-1}{n} \right \}\,.
	\end{equation*}
	Consider now the function $\hat h$ defined as follows
	\begin{equation} \label{eq:h hat}
		\hat h(x,t) = \tilde h\left ( \frac{x}{r_0}, \frac{t-\hat t}{r_0^p}\right )\,.
	\end{equation}
	The function $\hat h$ is a supersolution in
	\begin{equation*}
		N:=(B_{(1+1/k_0)r_0} \setminus \overline B_{r_0}) \times (\hat t, \hat t + \bar \Lambda_u^{2-p} r_0^p)\,.
	\end{equation*}
	Thus, the comparison principle, \Cref{defBall}, \Cref{eq:t hat upper,eq:h hat} imply that
	\begin{equation*}
		u(x,t) \leq \hat h(x,t) \quad \text{in} \quad N \cap \Omega_\infty\,.
	\end{equation*}
	Next, using the upper estimate \Cref{tilde h upper} from \Cref{UpperBarrier} we see that
	\begin{equation} \label{boundary upper}
		u(x,\hat t + \bar \Lambda_u^{2-p} r_0^p) \leq C \bar \Lambda_u  \left ( \left |\frac{x}{r_0} \right |-1 \right)\,,
	\end{equation}
 for a constant $C = C(n,p)$. As $\hat x_0 \in \partial \Omega$ is arbitrary, we see, using from \Cref{boundary upper,eq:t hat upper}, that
	\begin{equation*}
		u(x,\hat t + \Lambda_u^{2-p} r_0^p) \leq C d(x,\partial \Omega)\,,
	\end{equation*}
for a new constant $C \equiv C(\diam \Omega/r_0,n,p)$. Furthermore, as $\hat t > c_2$ was arbitrary, we see that if $t > \bar \Lambda_u(c_2)^{2-p} r_0^p+c_2:=C_2$, then
	\begin{equation*}
		u(x,t) \leq C_1 \left ((p-2)C_1^{-1} t +1 \right )^{ -\frac{1}{p-2}} d(x, \partial \Omega)\,,
	\end{equation*}
for a constant $C_1 \equiv C_1(\diam \Omega/r_0,n,p) > 1$.  This completes the proof of the lemma.
\end{proof}
The next lemma provides the corresponding lower bound.
\begin{lemma}
	\label{SUniversalLowerBound} Let $\Omega\subset\R^n$ be a bounded connected $C^{1,1}$-domain satisfying the ball condition with radius $r_0$. Let $u \in C ([0,\infty);L^2(\Omega))$ be a non-negative $p$-parabolic function in $\Omega \times (0,\infty)$. Suppose that there is a ball $B_{4r_1}(x_1) \subset \Omega$, $r_1 \in (0,r_0)$, such that
	\begin{equation*}
		 \bar\Lambda := \fint_{B_{r_1}(x_1)} u(x,0) \dx > 0\,.
	\end{equation*}
	Then there exist $C_i \equiv C_i(\frac{\diam \Omega}{r_0},\frac{r_1}{r_0},n,p)$, $i \in \{3,4\}$, such that
	\begin{equation*}
		u(x, t) \geq \frac{\bar \Lambda}{C_3} \left ((p-2)C_3 \bar\Lambda^{p-2} t +1 \right )^{-\frac{1}{p-2}} d(x, \partial \Omega)\,,
	\end{equation*}
	whenever $x\in \Omega$ and $t > C_4 \bar \Lambda^{2-p}$. Furthermore, constants $C_i$, $i \in \{3,4\}$, are stable as $p \to 2^+$.
\end{lemma}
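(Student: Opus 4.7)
The strategy mimics the proof of \Cref{SUniversalUpperBound}, using \Cref{ComparisonFunction} in place of the upper barrier \Cref{UpperBarrier} and using forward Harnack chains to first produce a uniform positive lower bound in the interior. After scaling $u \mapsto u/\bar\Lambda$ and $(x, t) \mapsto (x/r_0, \bar\Lambda^{p-2} r_0^{-p} t)$, we may assume $\bar\Lambda = 1$ and $r_0 = 1$, so that $\Omega$ has diameter $R := \diam\Omega/r_0$ and satisfies the ball condition of radius $1$; relabel the inner ball radius as $r_1 \in (0,1)$.

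\emph{Step 1 (uniform interior lower bound).} From the hypothesis $\fint_{B_{r_1}(x_1)} u(\cdot, 0) \geq 1$, \Cref{TuomoWH} applied at $t = 0$ yields a pointwise lower bound $u \geq c_0$ on $B_{2 r_1}(x_1)$ at an intrinsic time of order $r_1^p$. Using this as seed data and iterating the weak forward Harnack chain \Cref{FChainMeas} along a finite covering of $\{x \in \Omega : d(x, \partial\Omega) \geq 1/2\}$ by admissible balls of radius $r_1$ (whose number of links is controlled by $R$ and $r_1$), one obtains constants $T^\star, c^\star > 0$, depending only on $R, r_1, n, p$ and stable as $p \to 2^+$, such that
$$u(x, T^\star) \geq c^\star \qquad \text{for every } x \in \Omega \text{ with } d(x, \partial\Omega) \geq 1/2.$$

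\emph{Step 2 (sharp decay up to the boundary).} For each $y \in \partial\Omega$, the ball condition furnishes $x^+(y) \in \Omega$ with $B_1(x^+(y)) \subset \Omega$, $\partial B_1(x^+(y)) \cap \partial\Omega = \{y\}$, and $d(x^+(y), \partial\Omega) = 1$; hence $B_{1/4}(x^+(y)) \subset \{d \geq 3/4\}$, so Step 1 gives $\fint_{B_{1/4}(x^+(y))} u(\cdot, T^\star) \geq c^\star$. Applying \Cref{ComparisonFunction} to $u$ viewed as a weak non-negative solution on $B_1(x^+(y)) \times (T^\star, \infty)$ with $r = 1$, $\varrho = 1/4$, $\lambda = c^\star$, $t_0 = T^\star$ yields, for $x \in B_1(x^+(y))$ and $t > T^\star + c_2 (c^\star)^{2-p}$,
$$u(x, t) \geq \frac{c^\star}{c_1}\bigl(c_1(p-2)(t - T^\star)(c^\star)^{p-2} + 1\bigr)^{-\frac{1}{p-2}}\, d\bigl(x, \partial B_1(x^+(y))\bigr).$$
For $x \in \Omega$ with $d(x, \partial\Omega) < 1$, take $y$ to be the closest boundary point to $x$; then $x, y, x^+(y)$ are collinear and $d(x, \partial B_1(x^+(y))) = 1 - |x - x^+(y)| = d(x, \partial\Omega)$. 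For $x$ with $d(x, \partial\Omega) \geq 1$, apply \Cref{ComparisonFunction} instead to the interior ball $B_1(x) \subset \Omega$ (again using Step 1 to seed $B_{1/4}(x) \subset \{d \geq 3/4\}$) and evaluate at the center: $d(x, \partial B_1(x)) = 1 \geq d(x, \partial\Omega)/R$. Either way the bound takes the required form in $d(x, \partial\Omega)$ after absorbing $R$ into the constant.

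\emph{Assembly and main obstacle.} Restricting to $t \geq 2 T^\star$, so that $t - T^\star \geq t/2$ absorbs the time shift, the two cases merge into the sought estimate with constants depending only on $R, r_1, n, p$; undoing the scaling produces the statement with $C_3, C_4$ depending on $\diam\Omega/r_0$, $r_1/r_0$, $n$, $p$, and stable as $p \to 2^+$. The principal technical obstacle sits in Step 1: each link of the forward Harnack chain depletes the multiplicative constant and consumes intrinsic waiting time, so preserving $p$-stability requires that the number of links be controlled only by the geometry (by $R$ and $r_1$), not by $p$, and that the constants of \Cref{TuomoWH} and \Cref{FChainMeas} themselves remain $p$-stable, which indeed they do. The remaining steps parallel \Cref{SUniversalUpperBound}, with \Cref{ComparisonFunction}, built on the classical subsolution from \Cref{lembar}, playing the role played there by the supersolution \Cref{UpperBarrier}.
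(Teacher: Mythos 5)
Your proposal is correct and follows essentially the same route as the paper's own proof: normalize so $\bar\Lambda = r_0 = 1$, use \Cref{TuomoWH} and \Cref{FChainMeas} to spread the initial mass into a uniform lower bound on $\Omega^{1/2}$ at some time $t^\filledstar$ controlled by $\diam\Omega/r_0$, $r_1/r_0$, $n$, $p$, then apply \Cref{ComparisonFunction} seeded at the interior tangent-ball center $x^+(y)=a_2(y)$ (for points with $d(x,\partial\Omega)<1$) or at $x$ itself (for deep interior points), exploiting collinearity of $x,y,x^+(y)$ from the ball condition to convert $d(x,\partial B_1(x^+(y)))$ into $d(x,\partial\Omega)$. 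Your explicit note that one must first apply \Cref{TuomoWH} at $t=0$ before chaining with balls of radius $\min\{r_1,1/8\}$ (so the $4$-times enlarged balls stay in $\Omega$) and your $t\geq 2T^\star$ absorption of the time shift are precisely the minor technical points the paper leaves implicit.
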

\begin{proof}
	After scaling and translating we may assume that $\bar\Lambda=1$, $x_1 = 0$ and $r_0=1$. Note that with these assumptions we have
	\begin{equation}
		\label{tr} \fint_{B_{r_1}(0)} u(x,0) \dx =1 \,.
	\end{equation}
	Denote now the set
	\begin{equation*}
		 \Omega^{\delta} := \{ x\in \Omega \, : \, d(x,\Omega) > \delta\}\,.
	\end{equation*}
	Since $\Omega$ is connected and satisfies the ball condition with radius $1$, we also obtain that $\Omega^\delta$ is connected for $\delta \in (0,1/2]$ and thus any two points in $\Omega^\delta$ can be connected by a Harnack chain of balls of size $\delta/4$ and with length depending only on $n,p,\diam \Omega$, and $\delta$. By using \Cref{FChainMeas} and \Cref{tr} we then find positive constant $c^\filledstar$ and $t^\filledstar$, both depending only on $n,p,\diam \Omega$, and $r_1$, such that
	\begin{equation*}
		\inf_{x \in \Omega^{1/2}} u(x,t^\filledstar) \geq 1/c^\filledstar\,.
	\end{equation*}
	\Cref{ComparisonFunction} then proves the result whenever $x \in \Omega^1$. Next, let $y \in \Omega \setminus \Omega^1$ and let $y^\filledstar \in \partial \Omega$ be such that $d(y,\partial \Omega)=|y-y^\filledstar|$.
	Since $d(y,\partial \Omega) \leq 1$ and since the direction is unique (see \Cref{ballNTAcork}) we have that $y = a_{2d(y,\partial \Omega)}(y^\filledstar)$. With this at hand we can consider the point $a_{2}(y^\filledstar)$ (which is collinear with $y,y^\filledstar$) satisfying
	\begin{equation*}
		 \fint_{B_{1/4}(a_2(y^\filledstar))} u(x,t^\filledstar) \dx \geq 1/c^\filledstar \,.
	\end{equation*}
	Applying \Cref{ComparisonFunction} we see that
	\begin{equation*}
		u(x,t) \geq \frac{1}{c^\filledstar c_1} \left ( c_1(p-2)[c^\filledstar]^{p-2} (t-t^\filledstar) +1 \right )^{- \frac{1}{p-2}} d(x,\partial B_{1}(a_2(y^\filledstar)))\,,
	\end{equation*}
	whenever $x \in B_{1}(a_2(y^\filledstar))$, $t>t^\filledstar$. Applying this for $x=y$ completes the proof.
\end{proof}
\begin{remark}
	Note that our tools are too rough to obtain the lower bound in \Cref{SUniversalLowerBound} independent on the distribution of the initial data. To remedy this, we assume that the initial data is positive in a region away from the boundary.
\end{remark}

In the next theorem we combine the results of~\Cref{SUniversalUpperBound,SUniversalLowerBound} to obtain an elliptic type global Harnack estimate.

\begin{theorem}
	\label{theo1} Let $\Omega\subset\R^n$ be a bounded $C^{1,1}$-domain satisfying the ball condition with radius $r_0$. Let $u \in C ([0,\infty);L^2(\Omega)) \cap L^{p}(0,\infty; W_{0}^{1,p}(\Omega))$ be a non-negative $p$-parabolic function in $\Omega \times (0,\infty)$. Let
	\begin{equation*}
		\bar\Lambda := \fint_{\Omega} u(x,0) \dx\,.
	\end{equation*}
	Assume that $\supp u(\cdot,0) \subset \Omega^\delta = \{x \in \Omega: d(x,\partial \Omega) > \delta \}$. Then there are constants $c_i \equiv c_i(\frac{\diam \Omega}{r_0},\frac{\delta}{r_0},n,p)$, $i \in \{1,2\}$, such that
	\begin{align*}
		{c_1}^{-1} \leq \frac{u(x,t+\epsilon \bar\Lambda^{2-p})}{u(x,t)} \leq c_1\,,
	\end{align*}
	whenever $\epsilon \in [0,1]$, $x \in \Omega$ and $t \geq c_2 \bar\Lambda^{2-p}$. Furthermore, constants $c_i$, $i \in \{1,2\}$, are stable as $p \to 2^+$.
\end{theorem}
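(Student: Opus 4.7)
The plan is to sandwich $u(x,t)$ and $u(x,t+\epsilon\bar\Lambda^{2-p})$ between the matching two-sided envelopes produced by \Cref{SUniversalUpperBound} and \Cref{SUniversalLowerBound}, and to show that the ratio of these envelopes stays bounded away from $0$ and $\infty$ as long as $\bar\Lambda^{p-2} t \ge c_2$ and $\epsilon \in [0,1]$.

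The first step is to manufacture initial data to which \Cref{SUniversalLowerBound} applies. Since $\supp u(\cdot,0) \subset \Omega^{\delta}$, we have $\int_{\Omega^{\delta}} u(x,0)\,\dx = \bar\Lambda|\Omega|$. I cover $\Omega^{\delta}$ by finitely many balls $B_{r_1}(x_i)$ with $r_1 = \tfrac{1}{8}\min\{\delta, r_0\}$; the number $N$ of such balls depends only on $\diam\Omega/r_0$ and $\delta/r_0$, and each enlarged ball $B_{4r_1}(x_i)$ sits inside $\Omega$. By pigeonhole, at least one satisfies $\fint_{B_{r_1}(x_i)} u(x,0)\,\dx \ge c_0 \bar\Lambda$, with $c_0$ depending only on $\diam\Omega/r_0$, $\delta/r_0$ and $n$.

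Next I apply \Cref{SUniversalUpperBound} (valid for $t \ge C_2\bar\Lambda^{2-p}$) and \Cref{SUniversalLowerBound} to the ball produced above (valid for $t \ge C_4 (c_0\bar\Lambda)^{2-p}$) and define $c_2$ to be the maximum of these two thresholds. For any $s \in \{t, t+\epsilon\bar\Lambda^{2-p}\}$ this gives
\[
A_-(s)\, d(x,\partial\Omega) \,\le\, u(x,s) \,\le\, A_+(s)\, d(x,\partial\Omega),
\]
where $A_\pm(s) = \tilde c_\pm \bar\Lambda\bigl((p-2)\alpha_\pm \bar\Lambda^{p-2} s + 1\bigr)^{-1/(p-2)}$ with positive constants $\tilde c_\pm, \alpha_\pm$ depending only on $\diam\Omega/r_0, \delta/r_0, n, p$ and stable as $p\to 2^+$.

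Finally, I take ratios: the distance factors cancel, and bounding numerator and denominator by opposite envelopes reduces the upper and lower estimates for $u(x,t+\epsilon\bar\Lambda^{2-p})/u(x,t)$ to showing that
\[
\frac{(p-2)\alpha s + 1}{(p-2)\beta (s+\eta) + 1},\qquad \eta \in [0,1],\ s := \bar\Lambda^{p-2} t \ge c_2,
\]
is bounded above and below by positive constants for positive $\alpha,\beta$; since $s\ge c_2$ and $\eta\le 1$, this is elementary, and its $(p-2)$-th root remains bounded. Stability as $p \to 2^+$ follows from the stability of $C_1,\dots,C_4$ together with the limit $((p-2)\alpha s + 1)^{1/(p-2)} \to e^{\alpha s}$. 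The only mildly non-routine point is the pigeonhole step used to produce a localized initial average from the global $\bar\Lambda$; everything else is careful bookkeeping of constants.
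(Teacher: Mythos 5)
Your proposal follows essentially the same route as the paper: sandwich $u$ between the two-sided envelopes of \Cref{SUniversalUpperBound} and \Cref{SUniversalLowerBound}, then take the ratio and observe that the distance factors cancel and the remaining ratio of envelopes is controlled once $\bar\Lambda^{p-2}t$ is bounded below. The only thing you add that the paper's proof leaves tacit is the covering-and-pigeonhole step that converts the global average $\fint_\Omega u(\cdot,0)$ into a local average $\fint_{B_{r_1}(x_1)} u(\cdot,0)$ with $B_{4r_1}(x_1)\subset\Omega$, which is the hypothesis \Cref{SUniversalLowerBound} actually needs; the paper just invokes that lemma with the same $\bar\Lambda$ and lets the dependence on $\delta/r_0$ absorb the discrepancy. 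Your bookkeeping of the resulting envelope ratio is the same elementary computation the paper alludes to and omits. So this is the same proof, spelled out a bit more explicitly.
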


\begin{proof}
	In the following we will use the constants $C_i$, $i \in \{1,\ldots,4\}$, introduced in \Cref{SUniversalUpperBound} and \Cref{SUniversalLowerBound}. Let $t_0 = \max\{C_2,C_4\}\bar\Lambda^{2-p}$ and consider $t \geq t_0 $ and $\epsilon \in (0,1)$. Then, using \Cref{SUniversalUpperBound} and \Cref{SUniversalLowerBound} we see that
	\begin{align*}
		\frac{u(x,t+\epsilon \bar\Lambda^{2-p})}{u(x,t)} &\leq C_1 C_3\left (\frac{(p-2)C_1^{-1} ( t\bar\Lambda^{p-2} + \epsilon) + 1 }{(p-2)C_2 t \bar\Lambda^{p-2}+1} \right )^{-\frac{1}{p-2}}, \\
		\frac{u(x,t+\epsilon \bar\Lambda^{2-p})}{u(x,t)} &\geq \frac{1}{C_1 C_3} \left (\frac{(p-2)C_3 ( t\bar\Lambda^{p-2} + \epsilon) +1}{(p-2)C_1^{-1} t \bar\Lambda^{p-2}+1} \right )^{-\frac{1}{p-2}}.
	\end{align*}
	\Cref{theo2} follows from this by elementary manipulations. We omit further details.
\end{proof}

In the next theorem we use~\Cref{SUniversalUpperBound,SUniversalLowerBound} together with $C^{1,\alpha}$ estimates for weak solution to obtain a global boundary Harnack principle as well as H\"older continuity of ratios of solutions. The intrinsic time interval ensures that the estimate is $p$-stable.

\begin{theorem}
	\label{theo2} Let $\Omega\subset\R^n$ be a bounded $C^{1,1}$-domain satisfying the ball condition with radius $r_0$. Let $u,v \in C([0,\infty);L^2(\Omega)) \cap L^{p}(0,\infty; W_{0}^{1,p}(\Omega))$ be non-negative $p$-parabolic functions in $\Omega \times (0,\infty)$. Let
	\begin{equation*}
		\bar\Lambda_u = \fint_{\Omega} u(x,0) \dx\,, \qquad \bar\Lambda_v = \fint_{\Omega} v(x,0) \dx\,.
	\end{equation*}
	Assume that the initial data is distributed as follows
	\begin{equation*}
		\supp u(\cdot,0)\,,\ \supp v(\cdot,0) \subset \Omega^\delta = \{x \in \Omega: d(x,\partial \Omega) > \delta \}\,.
	\end{equation*}
	Then there exists $\bar C_1 \equiv \bar C_1(\frac{\diam \Omega}{r_0},\frac{\delta}{r_0},n,p)$, such that if $\bar C_1 \leq T_- \leq T_+$ satisfy
	\begin{equation} \label{T+T-}
		T_{-} \min \{\bar\Lambda_u,\bar\Lambda_v\}^{2-p}\leq T_{+} \max \{\bar\Lambda_u,\bar\Lambda_v\}^{2-p}\,,
	\end{equation}
	the following holds. There exist
	$\bar C_i \equiv \bar C_i(\frac{\diam \Omega}{r_0},\frac{\delta}{r_0},T_-,T_+,n,p)$, $i \in \{2,3\}$, such that
	\begin{equation}
		\label{s1} \bar C_2^{-1}\frac{\bar\Lambda_u}{\bar\Lambda_v}\leq \frac{u(x,t)}{v(x,t)} \leq \bar C_2 \frac{\bar\Lambda_u}{\bar\Lambda_v}\,,
	\end{equation}
	whenever
	\begin{equation*}
		(x,t) \in D := \Omega \times (T_{-} \min \{\bar\Lambda_u,\bar\Lambda_v\}^{2-p}\,,\,T_{+} \max \{\bar\Lambda_u,\bar\Lambda_v\}^{2-p})\,.
	\end{equation*}
	Furthermore, there exists an exponent $\sigma \equiv \sigma(n,p)\in (0,1)$, such that
	\begin{equation}
		\label{s2} \left \lvert \frac{u(x,t)}{v(x,t)} - \frac{u(y,s)}{v(y,s)} \right \rvert \leq \bar C_3 \frac{\bar\Lambda_u}{\bar\Lambda_v} \left (|x-y| + \max \{\bar\Lambda_u, \bar\Lambda_v\}^{\frac{2-p}{p}}|t-s|^{1/p} \right)^\sigma
	\end{equation}
	whenever $(x,t), (y,s) \in D$. The constants $\bar C_i$, $i \in \{1,2,3\}$, and $\sigma$ are stable as $p \to 2^+$.
\end{theorem}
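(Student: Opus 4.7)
My approach is to derive (7.5) directly from the sharp decay estimates of Lemmas 7.1--7.2, and then to upgrade this to the Hölder estimate (7.6) via boundary $C^{1,\alpha}$ regularity for the $p$-parabolic equation in $C^{1,1}$-domains, carried out in the intrinsic geometry.

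First, (7.5) follows immediately by applying Lemma 7.1 to $u$ and Lemma 7.2 to $v$, and vice versa. Taking $\bar C_1$ large enough relative to the waiting-time constants of those lemmas, we obtain, for $(x,t) \in D$,
\[
u(x,t) \leq C_1\,\bar\Lambda_u\,\Phi^+_u(t)\,d(x,\partial\Omega), \qquad v(x,t) \geq \tfrac{1}{C_3}\,\bar\Lambda_v\,\Phi^-_v(t)\,d(x,\partial\Omega),
\]
where $\Phi^+_u(t) = ((p-2)C_1^{-1}\bar\Lambda_u^{p-2}t+1)^{-1/(p-2)}$ and $\Phi^-_v(t) = ((p-2)C_3\bar\Lambda_v^{p-2}t+1)^{-1/(p-2)}$. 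Under (7.4) both $\bar\Lambda_u^{p-2}t$ and $\bar\Lambda_v^{p-2}t$ lie in a bounded interval $[T_*,T^*]$ depending only on $T_\pm$ for $t$ in the time-slice of $D$; hence $\Phi^+_u(t)/\Phi^-_v(t)$ is bounded above and below by constants depending only on $T_\pm, n, p$. The distance factors cancel and produce the upper ratio bound; the lower bound is symmetric.

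Second, for (7.6), let $w = u/v$. I would split into an interior and a boundary regime. In the interior, say $d(x,\partial\Omega) \geq r_0/4$, the lemmas together with a Harnack chain (e.g.\ Lemma 3.2) give $u \sim \bar\Lambda_u$ and $v \sim \bar\Lambda_v$ uniformly on $D$; standard interior $C^\sigma$ Hölder regularity for $p$-parabolic solutions, combined with the uniform positive lower bound on $v$, then yields the desired Hölder control on $w$ in the intrinsic parabolic metric on compact subsets of the interior. Near the boundary, fix $x_0 \in \partial\Omega$ and introduce the intrinsic rescaling
\[
\tilde u(y,\tau) = \frac{u(x_0 + r_0 y,\, t_0 + \bar\Lambda^{2-p} r_0^p \tau)}{r_0\,\bar\Lambda}, \qquad \bar\Lambda := \max\{\bar\Lambda_u,\bar\Lambda_v\},
\]
and similarly $\tilde v$. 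The ball condition guarantees that the rescaled domain $\tilde\Omega$ is $C^{1,1}$ with uniform constants, and by Lemmas 7.1--7.2 both $\tilde u$ and $\tilde v$ are two-sided comparable to the rescaled distance $\tilde d$ on unit cylinders. Boundary $C^{1,\alpha}$ regularity up to $\partial \tilde\Omega$ for the $p$-parabolic equation in $C^{1,1}$-cylinders (of Lieberman/DiBenedetto type) then shows that $\tilde u/\tilde d$ and $\tilde v/\tilde d$ extend to $C^\sigma$-functions up to the boundary with norms bounded by absolute constants. Since $\tilde d$ is itself $C^{1,1}$ with a positive uniform lower bound on its inward normal derivative, $\tilde w = (\tilde u/\tilde d)/(\tilde v/\tilde d)$ is $C^\sigma$ up to the boundary with a uniform norm, and scaling back produces (7.6) with the stated intrinsic space-time metric.

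The main obstacle is securing a $p$-stable boundary $C^{1,\alpha}$ estimate, i.e.\ constants independent of $p$ as $p \to 2^+$; this forces careful use of the degenerate boundary regularity theory, and is where the intrinsic cylinders developed in Sections 3--6 play the role of the canonical local geometry under which the problem becomes uniformly controlled. Once that estimate is in hand, matching interior and boundary Hölder bounds on overlapping intrinsic cylinders of comparable scale yields the full statement. The appearance of $\max\{\bar\Lambda_u,\bar\Lambda_v\}$ in the rescaling, rather than either $\bar\Lambda_u$ or $\bar\Lambda_v$ individually, is forced by the need for a single intrinsic scale that controls both equations simultaneously; condition (7.4) is precisely what renders these two natural scales compatible.
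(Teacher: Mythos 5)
Your derivation of \eqref{s1} from \Cref{SUniversalUpperBound,SUniversalLowerBound} matches the paper's proof exactly: plug both decay estimates into the ratio and observe that under \eqref{T+T-} the quantities $\bar\Lambda_u^{p-2}t$ and $\bar\Lambda_v^{p-2}t$ stay in a bounded interval on $D$, so the nonlinear time factors cancel.

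For \eqref{s2} you take a genuinely different route. The paper does \emph{not} rescale spatially, does \emph{not} split interior vs.\ boundary, and does \emph{not} use a single common scale $\bar\Lambda=\max\{\bar\Lambda_u,\bar\Lambda_v\}$. Instead it rescales $u$ and $v$ separately and only in time, $\tilde w(x,t)=w(x,\bar\Lambda_w^{2-p}t)/\bar\Lambda_w$ for $w\in\{u,v\}$, so each $\tilde w$ is intrinsically normalized with $\sup\tilde w\lesssim 1$ for $t>c_2$ via \Cref{SUniversalUpperBound1}. It then applies Lieberman's global boundary $C^{1,\alpha}$ estimate once on all of $\Omega\times(c_2,\infty)$, transfers this to a Hölder bound on $\tilde w/d$ as in \cite{KMN}, scales each back, and finally combines them at the \emph{same} physical time with the algebraic identity \eqref{smartidentity}. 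This avoids both the localization/patching step and the issue you correctly flag of getting a $p$-stable boundary $C^{1,\alpha}$ estimate -- the separate intrinsic normalization is precisely what makes Lieberman's constant uniform. Your common-scale rescaling works in principle but has two soft spots. First, the map $\tilde u(y,\tau)=u(x_0+r_0y,\,t_0+\bar\Lambda^{2-p}r_0^p\tau)/(r_0\bar\Lambda)$ is not $p$-parabolic: to preserve the equation with spatial factor $r_0$ and amplitude factor $\lambda$ one needs the time factor $\lambda^{2-p}r_0^p$, i.e.\ $\lambda=r_0\bar\Lambda$ forces $(r_0\bar\Lambda)^{2-p}r_0^p$ rather than $\bar\Lambda^{2-p}r_0^p$ (or drop the $r_0$ in the denominator). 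Second, with the common scale $\bar\Lambda=\max$ the smaller solution is \emph{not} two-sided comparable to $\tilde d$; if, say, $\bar\Lambda_u\ll\bar\Lambda_v$, then $\tilde u\lesssim(\bar\Lambda_u/\bar\Lambda_v)\tilde d$ only, so one must argue via a one-sided upper bound for $\tilde u/\tilde d$ together with the two-sided bound for $\tilde v/\tilde d$ -- which is exactly what the paper's identity \eqref{smartidentity} packages cleanly. (Incidentally, both the paper's stated exponent in \eqref{s2} and the one you quote should read $\max\{\bar\Lambda_u,\bar\Lambda_v\}^{(p-2)/p}$; the paper's own intermediate estimate \eqref{wd holder} carries $\bar\Lambda_w^{(p-2)/p}$, and chasing the identity gives the $\max$ with the same positive exponent.)
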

\begin{proof}
	In the following we will let $C_1,C_2,C_3$ and $C_4$ be the constants in \Cref{SUniversalUpperBound,SUniversalLowerBound}.
	We begin by proving \Cref{s1}. Indeed, using \Cref{SUniversalUpperBound,SUniversalLowerBound} we see that
	\begin{align} \label{global bhi lower}
		\frac{\bar \Lambda_u}{\bar \Lambda_v}\frac{1}{C_1C_3 }\left (\frac{(p-2)C_1^{-1}\bar \Lambda_v^{p-2} t+1}{(p-2)C_3\bar \Lambda_u^{p-2} t +1} \right )^{\frac{1}{p-2}} \leq \frac{u(x,t)}{v(x,t)}\,,
	\end{align}
	and
	\begin{align} \label{global bhi upper}
		\frac{u(x,t)}{v(x,t)} \leq \frac{\bar \Lambda_u}{\bar \Lambda_v} C_1 C_3 \left (\frac{(p-2)C_3 \bar \Lambda_v^{p-2} t +1} {(p-2)C_1^{-1}\bar \Lambda_u^{p-2} t +1}\right )^{\frac{1}{p-2}}\,,
	\end{align}
	whenever $t \geq \max\{C_2,C_4\} \min\{\bar \Lambda_u,\bar \Lambda_v\}^{2-p}$. In particular we see that if $T_- > \max\{C_2,C_4\}$, and if $T_-,T_+$ satisfy \Cref{T+T-}, then \Cref{s1} holds with a constant $\bar C_2$ depending only on $\frac{\diam \Omega}{r_0},\frac{\delta}{r_0},T_-,T_+,n,p$.
	
	To proceed, consider the rescaled $p$-parabolic functions
	\begin{equation*}
		\tilde u(x,t) = \frac{u(x,\bar \Lambda_u^{2-p} t)}{\bar \Lambda_u}\,, \qquad \tilde v(x,t) = \frac{v(x,\bar \Lambda_v^{2-p} t)}{\bar \Lambda_v}\,.
	\end{equation*}
	Using \Cref{SUniversalUpperBound1} for $w \in \{\tilde u, \tilde v\}$ we get for $t > c_2$ that
	\begin{align*}
		\sup_{x \in \Omega} w(x,t) \leq c_1 \left ( {(p-2)c_1^{-1} t+1}\right )^{-\frac{1}{p-2}} \leq c_1\,,
	\end{align*}
	where $c_1,c_2$ are as in \Cref{SUniversalUpperBound1}.
	Thus we can apply \cite[Theorem 0.1]{L} to conclude that there exist $C$ and $\sigma$, depending on $\Omega,p$ and $n$, such that, for $w \in \{\tilde u, \tilde v\}$,
	\begin{equation*}
		|\grad w(x,t) - \grad w(\tilde x,\tilde t)| \leq C(|x-\tilde x| + |t-\tilde t |^{1/p})^\sigma\,,
	\end{equation*}
	whenever $(x,t), (\tilde x,\tilde t) \in \Omega \times (c_2,\infty)$. In particular, arguing as in \cite[(3.31), p. 2717]{KMN}, we have, for $w \in \{\tilde u, \tilde v\}$,
	\begin{equation}\label{wd tilde holder}
		\left \lvert \frac{w(x,t)}{d(x,\partial \Omega)} - \frac{ w(\tilde x,\tilde t)}{d(\tilde x,\partial\Omega)} \right \rvert \leq C(|x-\tilde x| + |t-\tilde t|^{1/p})^\sigma\,,
	\end{equation}
	whenever $(x,t), (\tilde x,\tilde t) \in \Omega \times (c_2,\infty)$. Scaling back to the original $u,v$, \Cref{wd tilde holder} becomes, with $w \in \{u, v\}$,
	\begin{equation} \label{wd holder}
		\left \lvert \frac{w(x,t)}{d(x,\partial \Omega)} - \frac{ w(\tilde x, \tilde t)}{d(\tilde x,\partial\Omega)} \right \rvert \leq \bar \Lambda_w C(|x-\tilde x| + \bar \Lambda_w^{\frac{p-2}{p}}|t-\tilde t|^{1/p})^\sigma\,,
	\end{equation}
	whenever $(x,t), (\tilde x,\tilde t) \in \Omega \times (c_2 \bar \Lambda_w^{p-2},\infty)$. Next, using the identity
	\begin{align}
		\frac{u(x,t)}{v(x,t)} - \frac{u(y,s)}{v(y,s)} = &\frac{d(x,\partial \Omega)}{v(x,t)} \left ( \frac{u(x,t)}{d(x,\partial \Omega)}-\frac{u(y,s)}{d(y,\partial \Omega)} \right ) \notag \\
		&+ \frac{u(y,s)}{v(y,s)} \frac{d(x,\partial \Omega)}{v(x,t)} \left ( \frac{v(y,s)}{d(y,\partial \Omega)}-\frac{v(x,t)}{d(x,\partial \Omega)} \right )\,, \label{smartidentity}
	\end{align}
	assuming that $s,t \in (T_{-} \min \{\bar\Lambda_u,\bar\Lambda_v\}^{2-p}, T_{+} \max \{\bar\Lambda_u,\bar\Lambda_v\}^{2-p})$ and that $T_{-} > c_2$ (where $c_2$ is from \Cref{SUniversalUpperBound1}), we can apply  \Cref{SUniversalUpperBound,SUniversalLowerBound} together with \Cref{s1,wd holder} in the identity \Cref{smartidentity} to obtain \Cref{s2}. This completes the proof of \Cref{theo2}.
\end{proof}

\begin{remark}
	Considering the estimates \Cref{global bhi lower,global bhi upper} in the proof of \Cref{theo2}, we see that the nonlinearities dominate for large values of $t$. In particular, there exists a $p$-instable constant $C \equiv C(\frac{\diam \Omega}{r_0},\frac{\delta}{r_0},n,p)$ such that
	\begin{equation*}
		C^{-1}\leq \lim_{t\to\infty}\frac{u(x,t)}{v(x,t)} \leq C\,,
	\end{equation*}
	whenever $x\in \Omega$. Note that $C$ is independent of the initial data. This does not happen when $p=2$ and the effect is purely non-linear.
\end{remark}

\begin{remark}
	Note that to prove \Cref{theo2} we rely on estimates established in \Cref{SUniversalUpperBound,SUniversalLowerBound}, instead of relying on the comparison principle, the Carleson estimate and the Harnack inequality as in \cite[Theorem 2.1]{FGS}. This is why our estimates from below depend on the distribution of the initial data. Furthermore, this falls fairly short of the result in \cite{FGS}, but none the less we provide a $p$-stable version of the phenomena involved in our case.
\end{remark}


\section{Local estimates in $C^{1,1}$-domains } \label{s.local}

In this section the main focus is to develop an intrinsic version of the boundary Harnack principle~\Cref{st1}, see \Cref{ss.localbhi}. To do this, we first prove an upper and a lower decay rate estimate in the next section.


\subsection{Upper and lower bound on the decay} \label{ss.locdecay}
We begin with the upper bound, which follows by combining the barrier function from \Cref{UpperBarrier} together with the Carleson estimate \Cref{carleson:thm1}. In the following $M$ will denote the NTA-constant of the $C^{1,1}$ domain $\Omega$, see \Cref{ballNTAcork}. 
\begin{lemma}
	\label{AGSUpbd} Let $u$ be a non-negative solution in $\Omega_T$, where $\Omega$ is a $C^{1,1}$ domain satisfying the ball condition with radius $r_0$. Let $x_0 \in \partial \Omega$ and $0<r \leq r_0$. Let $0< \delta \leq \tilde \delta \leq 1$. Assume that $u(a_r(x_0),t_0) >0$ for a fixed $t_0 \in (0,T)$ and let
	\begin{equation*}
		\tau = \frac{C_4}{16} \left[C_5 u(a_r(x_0),t_0) \right]^{2-p} r^p \,,
	\end{equation*}
	where $C_4$ and $C_5$, both depending on $p,n$, are as in~\Cref{NTA BChain} and with $t_0 > 5 \tilde \delta^{p-1} \tau$. Assume furthermore that $u$ vanishes continuously on $S_T \cap \bigl (B_r(x_0) \times (t_0-4\tilde \delta^{p-1} \tau, t_0-\delta^{p-1} \tau)\bigr )$. Then there exist constants $c_i \equiv c_i (M,p,n)$, $i \in \{8,9\}$, such that
	\begin{equation*}
		\sup_{Q} u \leq \left( c_8 / \tilde \delta \right)^{c_9/ \delta} \frac{d(x_0,\partial \Omega)}{r} u(a_r(x_0),t) \,,
	\end{equation*}
	where $Q := (B_r(x_0) \cap \Omega) \times ( t_0 - 2\tilde \delta^{p-1}\tau, t_0 - \delta^{p-1} \tau)$. Furthermore, constants $c_i$, $i \in \{8,9\}$, are stable as $p \to 2^+$
\end{lemma}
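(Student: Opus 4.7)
The plan is to combine the Carleson estimate of \Cref{carleson:thm1} with a barrier comparison using \Cref{UpperBarrier} and the ball condition. The first step bounds $u$ by a multiple of $u(a_r(x_0),t_0)$ uniformly in a slightly enlarged cylinder; the second upgrades this to a linear-in-distance bound near the boundary.

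For the first step, I would apply \Cref{carleson:thm1} with $\delta_1 = \delta/4^{1/(p-1)}$ and $\delta_2 = \delta_3 = \tilde\delta/4^{1/(p-1)}$. These choices are dictated by the fact that the $\tau$ in this lemma is one quarter of the $\tau$ in \Cref{carleson:thm1}; one checks that the resulting Carleson cylinder contains $Q$ and that its lateral vanishing strip is contained in the vanishing strip provided by hypothesis. The conclusion is
\begin{equation*}
	\sup_Q u \leq H, \qquad H:=(\hat c_8/\tilde\delta)^{\hat c_9/\delta}\,u(a_r(x_0),t_0),
\end{equation*}
with $\hat c_8,\hat c_9$ depending only on $M,n,p$.

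For the second step, fix $x \in Q$ with $d(x,\partial\Omega) \leq r/2$ (the complementary regime $d(x,\partial\Omega) > r/2$ is absorbed into the Carleson bound directly). By the ball condition in \Cref{defBall}, the nearest boundary point $y = y(x) \in \partial\Omega$ admits an exterior tangent ball $B_{r_0}(x^-(y)) \subset \Omega^c$, with $x$, $y$, $x^-$ collinear, so that $|x - x^-| - r_0 = d(x,\partial\Omega)$. I then scale the supersolution $\tilde h$ from \Cref{UpperBarrier} according to the $p$-parabolic scaling $\hat h(z,s) = \alpha\, \tilde h\bigl(r_0^{-1}(z - x^-),\gamma^{-1}(s - t_-)\bigr)$ with $\gamma = \alpha^{p-2} r_0^p$ and $t_- = t_0 - 2\tilde\delta^{p-1}\tau$, choosing the internal parameters $(H^\star, T^\star, k, \alpha)$ so that $\alpha H^\star e^2 = H$, the shell width $k r_0$ is a fixed fraction of $r$, and $\gamma T^\star$ covers the time slab length $2\tilde\delta^{p-1}\tau$ of $Q$. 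The admissibility condition $k \leq \min\{(p-1)/n,(T^\star)^{1/(p-1)}(H^\star)^{(p-2)/(p-1)}\}$ in \Cref{UpperBarrier} can then be verified using the intrinsic relation $\tau \sim u(a_r(x_0),t_0)^{2-p} r^p$ built into the statement.

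To conclude, $\hat h$ dominates $u$ on the parabolic boundary of the shell $\{r_0 < |z - x^-| < r_0 + k r_0\} \cap \Omega$ over the time slab $(t_-, t_0 - \delta^{p-1}\tau)$: namely $\hat h \geq 0 = u$ on $\partial B_{r_0}(x^-) \cap \overline{\Omega}$ (using that $u$ vanishes on the relevant lateral boundary piece and extending it by zero as in \Cref{s.measure}), while $\hat h \geq H \geq \sup_Q u$ on the outer cap and the initial time slice by Step 1. The comparison principle then yields $u \leq \hat h$ in the shell, and \Cref{tilde h upper} combined with the collinearity identity $|x - x^-| - r_0 = d(x,\partial\Omega)$ produces
\begin{equation*}
	u(x,t) \leq \hat h(x,t) \leq \frac{H e^2}{k r_0}\bigl(|x - x^-| - r_0\bigr) \leq \frac{C H}{r}\, d(x,\partial\Omega),
\end{equation*}
which rearranges to the claimed inequality after substituting the definition of $H$. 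The main obstacle is the joint choice of the four barrier parameters $(\alpha, k, T^\star, H^\star)$: the admissibility condition, the time-span condition, and the shell-width condition compete, and the key observation that reconciles them is the compatibility between the intrinsic parabolic time scale $u(a_r(x_0),t_0)^{2-p} r^p$ and the natural time scale $k^{p-1}/H^{p-2}$ of the barrier. Since both \Cref{carleson:thm1} and \Cref{UpperBarrier} deliver $p$-stable constants, so will the final estimate.
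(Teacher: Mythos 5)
Your overall strategy — Carleson estimate followed by a barrier comparison using the exterior tangent ball — is exactly the paper's approach. However, the execution has two genuine gaps that would need to be repaired.

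\textbf{(1) The barrier is placed on the wrong spatial scale.} You build the barrier on the shell $\{r_0 < |z - x^-| < (1+k)r_0\}$, with exterior tangent ball of radius $r_0$, and take the shell width $k r_0 \sim r$. But the intersection of such a shell with $\Omega$ extends \emph{laterally} about $r_0 \sqrt{k} \approx \sqrt{r\,r_0}$ away from the contact point $y$, which for $r \ll r_0$ exceeds $r$. Consequently the outer lateral boundary $\partial B_{(1+k)r_0}(x^-)\cap\Omega$ and the boundary piece $\partial\Omega\cap\text{shell}$ both escape $B_r(x_0)$: on those portions you have neither the Carleson bound $u\le H$ nor the hypothesis that $u$ vanishes, so the comparison principle cannot be applied. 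The paper avoids this by using exterior tangent balls of radius comparable to $r$ (after the normalization $r=4$ the barrier lives on unit balls centered at $a_2(y)$, i.e.\ radius $r/4$ before scaling). The ball condition with radius $r_0$ implies the ball condition at any smaller radius, so the fix is simple: replace $r_0$ by $r/4$ throughout the barrier construction.

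\textbf{(2) The Carleson parameters do not reach far enough back in time.} With your choices $\delta_1 = \delta/4^{1/(p-1)}$, $\delta_2=\delta_3 = \tilde\delta/4^{1/(p-1)}$ and $\tau_{\mathrm{Carl}}=4\tau$, the Carleson box is
\begin{equation*}
B_r(x_0)\times\bigl(t_0-(\delta^{p-1}+\tilde\delta^{p-1})\tau,\,t_0-\delta^{p-1}\tau\bigr)\,,
\end{equation*}
whose bottom time $t_0-(\delta^{p-1}+\tilde\delta^{p-1})\tau$ lies strictly above the bottom $t_0-2\tilde\delta^{p-1}\tau$ of $Q$ whenever $\delta<\tilde\delta$. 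So the asserted ``$\sup_Q u\le H$'' fails at the earliest times in $Q$, and — more importantly — you lose the initial data for the barrier comparison precisely at its starting time $t_- = t_0-2\tilde\delta^{p-1}\tau$. The paper makes Carleson reach back to $t_0-3\tilde\delta^{p-1}\tau$ (after normalization) so that the sliding barrier, of duration $T=\tilde\delta^{p-1}\tau$, always has controlled initial data. You need to enlarge $\delta_2$ (there is room, since only $\delta_2\in(0,1)$ is required, not $\delta_2\le\delta_3$).

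Two smaller remarks: the scaling relation should read $\gamma=\alpha^{2-p}r_0^p$ (not $\alpha^{p-2}r_0^p$) for $\hat h$ to remain a supersolution; and the ``key observation'' paragraph correctly identifies the competing constraints, but the proof as written never actually verifies the admissibility inequality $k\le\min\{(p-1)/n,(T^\star)^{1/(p-1)}(H^\star)^{(p-2)/(p-1)}\}$, which the paper does via the explicit choice $k=\min\{(p-1)/n,\,\tilde\delta\,(\tfrac{C_4}{16}C_5^{2-p})^{1/(p-1)},\,1\}$.
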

\begin{proof}
	Without loss of generality, we may after scaling assume that $u(a_r(x_0),t_0) = 1$ and $r=4$. Applying \Cref{UpperBarrier,carleson:thm1}, with
	\begin{equation*}
		k := \min\left\{ \frac{p-1}{n} , \tilde \delta \left( \frac{C_4}{16}C_5^{2-p} \right)^{\frac1{p-1}} ,1 \right\}\,, \qquad H := \left( c_6 / \tilde \delta \right)^{c_7/ \delta}\,,
	\end{equation*}
	with constants as in \Cref{carleson:thm1} we get
	\begin{equation*}
		u(x,t) \leq H
	\end{equation*}
	whenever $(x,t) \in (B_{4}(x_0) \cap \Omega) \times (t_0-3 \tilde \delta^{p-1} \tau, t_0-\delta^{p-1} \tau)$.
	The comparison function indexed by its initial time $s_0$ and center point $y_0$, is
	\begin{equation*}
		\hat h_{y_0,s_0}(x,t) = \tilde h(x-y_0,t-s_0)
	\end{equation*}
	with $T = \tilde \delta^{p-1} \tau$, where $\tilde h$ is from \Cref{UpperBarrier}. Let $y \in \partial \Omega \cap B_1(x_0)$ and consider $y_0 = a_2(y)$,  an outer corkscrew point as in \Cref{ballNTAcork}. Then,  by the comparison principle
	\begin{equation} \label{eq:AGSupbdcomp}
		u(x,t) \leq \hat h_{y_0,s_0}(x,t)\,,
	\end{equation}
	in
	\begin{equation*}
		(\Omega \cap [B_{1+k}(y_0) \setminus B_1(y_0)]) \times (s_0, s_0+T)\,,
	\end{equation*}
	whenever
	\begin{equation*}
		(s_0,s_0+T) \subset (t_0 - 3 \tilde \delta^{p-1} \tau, t_0 - \delta^{p-1}\tau)\,.
	\end{equation*}
	From \Cref{UpperBarrier,eq:AGSupbdcomp} we have the estimate
	\begin{equation*}
		u(x,t) \leq \frac{H \exp(2)}{k} (|y_0-x|-1)\,,
	\end{equation*}
	whenever
	\begin{equation*}
		(x,t) \in (\Omega \cap [B_{1+k}(y_0) \setminus B_1(y_0)]) \times (t_0-2 \tilde \delta^{p-1} \tau, t_0-\delta^{p-1} \tau)\,,
	\end{equation*}
	and $y_0 \in \partial \Omega \cap B_1(x_0)$. From this the result follows by scaling back.
\end{proof}
The following lemma establishes to local lower bound on the decay, by combining the barrier from \Cref{lembar} and the Harnack estimates in \Cref{NTA FChain}.
\begin{lemma}
	\label{th2-} Let $u$ be a non-negative solution in $\Omega_T$, where $\Omega$ is a $C^{1,1}$ domain satisfying the ball condition with radius $r_0$. Let $x_0 \in \partial \Omega$ and let $0 < r < r_0$ be fixed. Let $A^- = (a_{r}(x_0),t_0)$, $\theta_- = u(A^-)^{2-p}$ and $t_0 \in (0,T)$. There exists constants $c_i \equiv c_i(M,p,n)$, $i \in \{3,4\}$, such that if
	\begin{equation*}
		\theta_- r^p < t_0\,, \qquad \text{and} \qquad t_0 + 2 c_4 \theta_- r^p < T\,,
	\end{equation*}
	then
	\begin{equation*}
		\frac{1}{c_3} \frac{d(x, \partial \Omega)}{r}u(A^-) \leq u(x,t)
	\end{equation*}
	for $x \in B_{r}(x_0) \cap \Omega$ and $t_0+c_4 \theta_- r^p < t < t_0+2 c_4 \theta_- r^p$. Furthermore, constants $c_i$, $i \in \{3,4\}$, are stable as $p \to 2^+$.
\end{lemma}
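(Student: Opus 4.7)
The plan is to combine the forward Harnack chain of \Cref{NTA FChain} with the explicit subsolution barrier of \Cref{lembar}, placed in the interior tangent ball at the nearest boundary point. Fix $x \in B_r(x_0) \cap \Omega$, scale so that $u(A^-) = 1$, and let $\varrho_0 \in (1,2)$ be the constant from \Cref{lembar}. First, starting from $A^-$, I would apply \Cref{NTA FChain} with a parameter $\delta \equiv \delta(M,p,n)$ followed by the intrinsic Harnack inequality \Cref{Har} to produce $c^\star \equiv c^\star(M,p,n) > 0$ and a time $t_1 = t_0 + \tau_1 r^p$ such that $u(y, t) \geq c^\star$ on $(\Omega \cap B_{3r}(x_0)) \cap \{d(\cdot,\partial \Omega) \geq r(1-1/\varrho_0)\}$ for all $t \in [t_1, t_1 + \tau_2 r^p]$. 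The hypothesis $\theta_- r^p < t_0$ provides the past room needed at $A^-$, and this already proves the claim for any $x$ with $d(x, \partial \Omega) \geq r(1-1/\varrho_0)$ since $d(x,\partial \Omega)/r \leq 1$.

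For the remaining points $x \in B_r(x_0) \cap \Omega$ with $d(x, \partial \Omega) < r(1-1/\varrho_0)$, let $y_0 \in \partial \Omega$ be the nearest point to $x$, $\nu$ the inward unit normal at $y_0$ (well-defined by $C^{1,1}$-regularity), and $z := y_0 + r \nu$. Since $r < r_0$, the ball condition places the interior tangent ball $B_r(z) \subset \Omega$ touching $\partial \Omega$ only at $y_0$; crucially, $x$ lies on the radial segment from $y_0$ to $z$ and $|x - z| = r - d(x, \partial \Omega)$. I would rescale the barrier from \Cref{lembar} as
\begin{equation*}
	\tilde h(x, t) := c^\star\, h\bigl(\tfrac{\varrho_0}{r}(x-z),\; (c^\star)^{p-2}(\tfrac{\varrho_0}{r})^p(t-t_1)\bigr),
\end{equation*}
which is a classical subsolution on the annular cylinder $A := (B_r(z) \setminus \overline{B_{r/\varrho_0}(z)}) \times (t_1, t_1 + \tau r^p)$, with $\tau := T(c^\star)^{2-p}\varrho_0^{-p}$. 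The key geometric point is that $A \subset \overline \Omega$ with $y_0$ the only point of $\partial \Omega$ on its parabolic boundary, so the standard comparison principle applies.

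Verifying the boundary conditions, $\tilde h$ vanishes on the outer sphere $\partial B_r(z)$ and at $t = t_1$, while on the inner sphere $\partial B_{r/\varrho_0}(z)$ Step~1 yields $u \geq c^\star \geq \tilde h$ throughout $[t_1, t_1 + \tau r^p]$ (after requiring $\tau \leq \tau_2$). Using the remark after \Cref{lembar} to handle the initial corner on the inner sphere, comparison gives $u \geq \tilde h$ in $A$. Evaluating at $t^\sharp := t_1 + \tau r^p$, which corresponds to time $T$ in \Cref{lembar}, the radial gradient bound $|\partial_r h(\cdot, T)| \geq c(n,p)$ together with $h(\varrho_0, T) = 0$ integrate to $h(\xi, T) \geq c(\varrho_0 - |\xi|)$; substituting $\xi = (\varrho_0/r)(x - z)$ and using $|x - z| = r - d(x,\partial \Omega)$ yields $u(x, t^\sharp) \geq c\, d(x, \partial \Omega)/r$, which is the desired bound up to undoing the scaling. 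Setting $c_4 := \tau_1 + \tau$ places $t^\sharp = t_0 + c_4 r^p$, and running the argument with $t_1$ shifted within an interval of length $c_4 r^p$ covers the full window $(t_0 + c_4 r^p, t_0 + 2 c_4 r^p)$. The main obstacle is the bookkeeping of the interlocking time scales — past room for the chain at $A^-$, the Harnack propagation on the inner sphere, and the barrier's validity window — all reconciled by choosing $\delta$, $c^\star$, and $c_4$ in the right order.
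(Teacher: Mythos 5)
Your overall strategy is the same as the paper's: rescale to $u(A^-)=1$, push mass forward with a Harnack chain, and compare with the explicit barrier from \Cref{lembar} placed in an interior tangent ball. The paper's proof realizes exactly this idea, but it factors the barrier step through \Cref{ComparisonFunction}, applied at all centers $\tilde y$ of interior tangent balls of radius $1$ (the set $D$ in the paper's proof plays the role of your $z=y_0+r\nu$). The one genuine soft spot in your write-up is Step~1, where you assert that \Cref{NTA FChain}$+$\Cref{Har} directly yields $u\geq c^\star$ on the inner sphere \emph{for all} $t\in[t_1,t_1+\tau_2 r^p]$, and then ask that $\tau\leq\tau_2$ where $\tau=(c^\star)^{2-p}T\varrho_0^{-p}$. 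Both of those chain lemmas give a pointwise lower bound at a single time (or at best over a window proportional to the intrinsic waiting time via \Cref{TuomoWH}), and since $\tau$ itself scales like $(c^\star)^{2-p}$, the conditions $\tau_2\geq\tau$ and ``$c^\star$ obtained by waiting $\tau_2$'' are circular. The paper's \Cref{ComparisonFunction} resolves precisely this: the change of variables in \Cref{lemchange} turns the time-decaying solution into a supersolution of $\partial_\tau w=\Delta_p w+Cw$, for which \Cref{lowbdd} gives a lower bound that is uniform in $\tau\in(0,\infty)$, and then the barrier from \Cref{lembar} can be slid to arbitrary starting times $\hat\tau$ without bookkeeping. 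Your shift-of-$t_1$ device is the same idea done in the original variables; it can be made to work because the target window has fixed intrinsic length so $c^\star$ stays bounded below, but you would need to spell this out. Minor point: the inner sphere $\partial B_{r/\varrho_0}(z)$ can sit at distance up to $3r+r/\varrho_0$ from $x_0$, so the set in Step~1 should be taken in $B_{4r}(x_0)$, not $B_{3r}(x_0)$.
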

\begin{proof}
	Set $\lambda_- := u(A^-) $ and consider the scaled solution
	\begin{equation*}
		v(y,s) = \frac{1}{\lambda_-}u(x_0 + r y,t_0+s \lambda_-^{2-p} r^p) \,.
	\end{equation*}
	Set also $\tilde \Omega := \{ y \, : \, x_0 + ry \in \Omega \}$ so that $0 \in \partial \tilde \Omega$. For the new function $v$ we have the following situation, denoting $A_v^- = ((a_r(x_0)-x_0)/r,0)$,
	\begin{align*}
		v(A_v^-) &= 1\,, \\
		d(A^-, \partial \tilde \Omega) &= 1\,,
	\end{align*}
	and $v$ is a solution in $\tilde \Omega \times (-1,\tilde T)$, where $\tilde T := (T-t_0)\lambda_-^{p-2} r^{-p}$. Since $0 < r < r_0/4$ we know that $\tilde \Omega$ satisfies the ball condition with radius $4$. To continue, consider the following set
	\begin{equation*}
		D = \{y \in \tilde \Omega \, : \, d(y,B_1(0)\cap \partial \tilde \Omega) = d(y, \partial \tilde \Omega) = 1\}\,.
	\end{equation*}
	Note that $D \subset B_2(0) \cap \tilde \Omega$ and that $\sup \{d(a_1(y_0),y): y \in D\} \leq 2$ for any $y_0 \in \partial \tilde \Omega \cap B_1(0)$. We obtain from the Harnack chain estimate in~\Cref{NTA FChain} (applied with $\delta = \min\{c_h^{(2-p)/p},1\}$ where $c_h$ is from \Cref{NTA FChain}), that there is a $\tilde \tau_1 > 0$ depending only on $n,p,M$ such that
	\begin{equation*}
		v(x,\tilde \tau_1) \geq \frac{1}{\tilde c_1}\,,
	\end{equation*}
	whenever $x \in \{y: d(y,D) < 1/4 \}$ provided $\tilde T > \tilde \tau_1$. Using ~\Cref{ComparisonFunction} (applied with $r=1$, $\varrho=1/4$, $g = v(\cdot,\tilde \tau_1)$, $x_0 = \tilde y \in D$, $t_0 = \tilde \tau_1$) for all points $\tilde y \in D$, we get
	\begin{equation}
		\label{ComparisonLowerBound} v(y,t) \geq \frac{1}{\tilde c_1 c_1} \left( c_1 \tilde c_1^{2-p} (p-2) (t-\tilde \tau_2) + c_1 c_2 (p-2) + 1 \right )^{-\frac{1}{p-2}} d(y, \tilde \Omega)
	\end{equation}
	whenever $(y,t) \in ( \tilde \Omega \cap B_1(0) ) \times (\tilde \tau_2,\tilde T)$, with $\tilde \tau_2 = \tilde \tau_1 + c_2 \tilde c_1^{p-2}$ provided $\tilde T > \tilde \tau_2$. Going back to $\Omega$ and $u$ gives us the result provided $\tilde T > 2 \tilde \tau_2$.
\end{proof}

Combining \Cref{AGSUpbd,th2-} we obtain the joint estimate.

\begin{theorem}
	\label{th2} Let $u$ be a non-negative solution in $\Omega_T$, where $\Omega$ is a $C^{1,1}$ domain satisfying the ball condition with radius $r_0$. Let $x_0 \in \partial \Omega$, $t_0 \in (0,T)$, and let $0 < r < r_0$ be fixed. Let $A^- = (a_{r}(x_0),t_0)$ and $\theta_- = u(A^-)^{2-p}$. There exists constants $c_i \equiv c_i(M,p,n)$, $i \in \{5,6\}$, such that if
	\begin{equation*}
		\theta_- r^p < t_0\,, \qquad \text{and} \qquad t_0 + 2 c_4 \theta_- r^p < T\,,
	\end{equation*}
	then for $A^+ = (a_{r}(x_0),t_0 + 2 c_4 \theta_- r^p)$ and $\theta_+ = c_6^{-1} u(A^+)^{2-p}$ (where $c_4$ is from \Cref{th2-}), we have
	\begin{equation*}
		5 \theta_+ \leq \theta_-\,.
	\end{equation*}
	Furthermore, if $u$ vanishes continuously on
	\begin{equation*}
		S_T\cap \bigl (B_r(x_0) \times \left (t_0+\big [2c_4\theta_- -5\theta_+ \big ] r^p,t_0+\big [2 c_4\theta_- - \theta_+ \big ] r^p \right )\bigr )\,,
	\end{equation*}
	then
	\begin{equation*}
		\frac{1}{c_5} \frac{d(x, \partial \Omega)}{r}u(A^-) \leq u(x,t) \leq c_5 \frac{d(x, \partial \Omega)}{r} u(A^+)
	\end{equation*}
	for $x \in B_{r}(x_0) \cap \Omega$ and $t_0+\big [2 c_4 \theta_- - 2\theta_+\big ] r^p < t < t_0+\big [2 c_4 \theta_- - \theta_+\big ] r^p$. Furthermore, constants $c_i$, $i \in \{5,6\}$, are stable as $p \to 2^+$.
\end{theorem}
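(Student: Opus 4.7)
The plan is to combine the lower decay estimate of \Cref{th2-} with the upper decay estimate of \Cref{AGSUpbd}; \Cref{th2-} does double duty here, as it delivers the lower bound directly and, evaluated at $A^+$, controls $u(A^+)$ from below so that $\theta_+$ is controlled from above, securing the intrinsic ordering $5\theta_+\le\theta_-$.

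First I apply \Cref{th2-} as stated to obtain
\[
u(x,t)\ge \frac{1}{c_3}\,\frac{d(x,\partial\Omega)}{r}\,u(A^-)
\]
for every $x\in B_r(x_0)\cap\Omega$ and every $t\in(t_0+c_4\theta_-r^p,t_0+2c_4\theta_-r^p)$; by continuity the bound extends to the endpoint $t=t_0+2c_4\theta_-r^p$. Specializing to $x=a_r(x_0)$, which satisfies $d(a_r(x_0),\partial\Omega)\ge r/M$, yields $u(A^+)\ge u(A^-)/(c_3 M)$, whence $\theta_+\le c_6^{-1}(c_3M)^{p-2}\theta_-$. I then fix $c_6=c_6(M,p,n)$ large enough to force $5\theta_+\le\theta_-$ and, simultaneously, to meet a lower bound $c_6\ge 20/(C_4C_5^{2-p})$ needed in the next step. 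Enlarging $c_4$ if necessary so that $c_4\ge 2/5$, one checks that the theorem's window $(t_0+[2c_4\theta_--2\theta_+]r^p,\,t_0+[2c_4\theta_--\theta_+]r^p)$ is contained in the window from \Cref{th2-}, which establishes the lower bound in the stated range.

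For the upper bound I invoke \Cref{AGSUpbd} with evaluation time $\hat t:=t_0+2c_4\theta_-r^p$, so that the corkscrew value appearing there is exactly $u(A^+)$; the intrinsic scale becomes $\tau':=(C_4/16)C_5^{2-p}u(A^+)^{2-p}r^p$. The key is to choose the exponents $\delta\le\tilde\delta$ of \Cref{AGSUpbd} so that
\[
\delta^{p-1}\tau'=\theta_+\,r^p, \qquad 4\tilde\delta^{p-1}\tau'=5\theta_+\,r^p,
\]
which translates the vanishing set $S_T\cap\bigl(B_r(x_0)\times(\hat t-4\tilde\delta^{p-1}\tau',\hat t-\delta^{p-1}\tau')\bigr)$ of \Cref{AGSUpbd} into $S_T\cap\bigl(B_r(x_0)\times(\hat t-5\theta_+r^p,\hat t-\theta_+r^p)\bigr)$, precisely the vanishing set assumed in the theorem. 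The lower bound $c_6\ge 20/(C_4C_5^{2-p})$ ensures $\tilde\delta\le 1$, and $\delta\le\tilde\delta$ is immediate; the positivity condition $\hat t>5\tilde\delta^{p-1}\tau'=\tfrac{25}{4}\theta_+r^p$ follows from $t_0>\theta_-r^p\ge 5\theta_+r^p$ and the additional $2c_4\theta_-r^p$ in $\hat t$. \Cref{AGSUpbd} then delivers
\[
u(x,t)\le c_5\,\frac{d(x,\partial\Omega)}{r}\,u(A^+)
\]
on $(B_r(x_0)\cap\Omega)\times(\hat t-\tfrac{5}{2}\theta_+r^p,\hat t-\theta_+r^p)$, a superset of the theorem's window, with $c_5=(c_8/\tilde\delta)^{c_9/\delta}$ depending only on $M,p,n$.

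The main obstacle is purely bookkeeping: everything hinges on matching the two intrinsic time-scales (the $\theta_\pm r^p$ in the theorem versus the $\tilde\delta^{p-1}\tau'$ and $\delta^{p-1}\tau'$ in \Cref{AGSUpbd}) so that the vanishing hypothesis supplied in the theorem is exactly the one \Cref{AGSUpbd} requires. This match forces the algebraic choice of $\delta,\tilde\delta$ in terms of $c_6,C_4,C_5$, and requiring $\delta,\tilde\delta\in(0,1]$ provides the one substantive constraint on $c_6$ beyond $5\theta_+\le\theta_-$. The $p$-stability of $c_5,c_6$ is inherited directly from that of \Cref{th2-,AGSUpbd}.
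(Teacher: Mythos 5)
Your proof is correct and follows essentially the same route as the paper's: the lower bound comes from~\Cref{th2-}, the upper bound from~\Cref{AGSUpbd} evaluated at the forward time $\hat t = t_0+2c_4\theta_-r^p$, and the ordering $5\theta_+\le\theta_-$ is forced by choosing $c_6$ large. The differences are bookkeeping-level but worth noting. Where the paper bounds $v(A_+)^{2-p}$ by appealing directly to the intermediate estimate \cref{ComparisonLowerBound} inside the proof of~\Cref{th2-}, you instead invoke the stated conclusion of~\Cref{th2-} together with the corkscrew estimate $d(a_r(x_0),\partial\Omega)\ge r/M$ (in fact $=r/2$ in the $C^{1,1}$ setting), which is more modular since you never need to open the proof of~\Cref{th2-}. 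Also, the paper chooses $\tilde\delta=\delta$ in~\Cref{AGSUpbd}, whereas you pick $\tilde\delta^{p-1}=\tfrac54\delta^{p-1}>\delta^{p-1}$ so that the vanishing window $(\hat t-4\tilde\delta^{p-1}\tau',\hat t-\delta^{p-1}\tau')$ becomes \emph{exactly} the theorem's hypothesis $(\hat t-5\theta_+r^p,\hat t-\theta_+r^p)$; this match is cleaner and avoids the containment gymnastics implicit in the paper's choice. One small point you should make explicit: the ``enlarge $c_4$'' step is legitimate only because \Cref{th2-} remains true for any $c_4'\ge c_4$ with a correspondingly larger $c_3$ (the estimate \cref{ComparisonLowerBound} degrades but stays positive on the longer window); stating this, or equivalently retroactively fixing $c_4\ge 2/5$ in~\Cref{th2-}, closes the loop. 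With that caveat addressed your argument is complete.
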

\begin{proof}
	Rescale $u$ as in the proof of \Cref{th2-}, let also $\tilde \tau_2$ be as in the proof of \Cref{th2-}. Thus
	\begin{equation*}
		\frac{1}{c_3} d(x, \partial \Omega) \leq v(x,t)
	\end{equation*}
	holds for $(x,t)  \bigl (B_{r}(x_0) \cap \Omega) \times (\tilde \tau_2, 2 \tilde \tau_2)$.
	Define $\tau_+ = 2\tilde \tau_2$ and consider $A_+ = (a_1(0),\tau_+)$. Then using \cref{ComparisonLowerBound} we get
	\begin{align*}
		v(A_+)^{2-p} \leq & \left [\frac{1}{\tilde c_1 c_1} \left( c_1 \tilde c_1^{2-p} (p-2) (t-\tilde \tau_2) + c_1 c_2 (p-2) + 1 \right )^{-\frac{1}{p-2}} \right ]^{2-p} \\
		= & \frac{1}{(\tilde c_1 c_1)^{2-p}} \left [ c_1 \tilde c_1^{2-p} (p-2) + c_1 c_2 (p-2) + 1 \right ] \\
		=: & \tilde c_2\,.
	\end{align*}
	We will now apply \Cref{AGSUpbd} with ($r=1$, $\delta^{p-1} = \min\{\frac{16}{5 \tilde c_2 C_4} C_5^{p-2},1\}$ and $\tilde \delta = \delta$, with $C_4,C_5$ from \Cref{AGSUpbd}). Doing this we see that
	\begin{align*}
		v(x,t) \leq \tilde c_3 d(x,\partial \tilde \Omega) v(A_+)\,,
	\end{align*}
	whenever $(x,t) \in (B_{1}(0) \cap \Omega) \times \big (\tau_+-v(A_+)^{2-p},\tau_+-v(A_+)^{2-p}/2 \big )$. Going back to $\Omega$ and $u$ gives us the result.
\end{proof}


\subsection{Local boundary Harnack estimate}
\label{ss.localbhi}
We are now ready to state and prove our local boundary Harnack principle, consult \Cref{figBHI} for a schematic of the geometry.
\begin{theorem}
	\label{theo3+} Let $u,v$ be two non-negative solutions in $\Omega_T$, where $\Omega$ is a $C^{1,1}$-domain satisfying the ball condition with radius $r_0$. Let $x_0 \in \partial \Omega$ $t_0 \in (0,T)$, and $0 < r < r_0$ be fixed. Let $A_- = (a_{r}(x_0),t_0)$ and assume that $u(A_-)=v(A_-)$. Let the constants $c_i$, $i \in \{4,5,6\}$ be as in \Cref{th2-,th2}. Let $\theta_- = u(A_-)^{2-p}$, and assume
	\begin{equation*}
		\theta_- r^p < t_0\,, \qquad \text{and} \qquad t_0 + 2 c_4 \theta_- r^p < T\,.
	\end{equation*}
	Set
	\begin{align*}
		A_+ &= (a_{r}(x_0),t_0 + 2c_4 \theta_- r^p)\,, &\theta_{+,u} = c_6^{-1} u(A_+)^{2-p}\,.
	\end{align*}
	Assume that $v(A_+) \geq u(A_+)$. Then there exists a time $t_+^\filledstar$, depending on $v$, satisfying
	\begin{align*}
		t_+^\filledstar &\in (t_0+( 2 c_4 \theta_- - \theta_{+,u})r^p, t_0+2 c_4 \theta_- r^p) \\
		A_+^\filledstar &= (a_{r}(x_0),t_+^\filledstar) &\theta_{+,v}^\filledstar = c_6^{-1} v(A_+^\filledstar)^{2-p},
	\end{align*}
	such that the following holds. If both $u$ and $v$ vanish continuously on
	\begin{equation*}
		S_T\cap\bigl (B_r(x_0) \times \left (t_0+\big [2c_4\theta_- -5\theta_{+,u} \big ] r^p,t_0+\big [2 c_4\theta_- - \theta_{+,u} \big ] r^p \right )\bigr )\,,
	\end{equation*}
	then
	\begin{equation*}
		\frac{1}{c_5^2} \frac{u(A_-)}{v(A_+^\filledstar)} \leq \frac{u(x,t)}{v(x,t)} \leq c_5^2 \frac{u(A_+)}{v(A_-)}\,,
	\end{equation*}
	whenever $(x,t)$ belongs to the set
	\begin{equation*}
		(B_{r}(x_0) \cap \Omega) \times \big (t_0+\big [2 c_4 \theta_- - ( \theta^\filledstar_{+,v} + \theta_{+,u}) \big] r^p,\, t_0+\big[2 c_4 \theta_- - \theta_{+,u} \big ]r^p \big ) \,.
	\end{equation*}
\end{theorem}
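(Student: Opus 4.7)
The plan is to combine the two-sided intrinsic decay estimate of \Cref{th2} applied to $u$ with analogous bounds for $v$, and take ratios. Since $u(A_-)=v(A_-)$, the intrinsic scale $\theta_-$ is identical for the two solutions, so the lower bounds from \Cref{th2-} align on the same time interval. However, the upper-bound interval for $v$ produced by a naive application of \Cref{th2} with pivot $A_+$ is shifted to the right of the natural interval $I_u = (t_0+(2c_4\theta_--2\theta_{+,u})r^p,\, t_0+(2c_4\theta_--\theta_{+,u})r^p)$ for $u$, because $v(A_+)\geq u(A_+)$ forces $\theta_{+,v}\leq\theta_{+,u}$; the two intervals may fail to overlap. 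The role of the auxiliary $t_+^\filledstar$ is to shift $v$'s pivot to a later time so that $v$'s upper-bound interval ends precisely at the right endpoint of $I_u$, which produces the specific interval of validity stated in the conclusion.

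Step 1 is an intermediate value construction. Set
\begin{equation*}
\Psi(s) := s - c_6^{-1} v(a_r(x_0),s)^{2-p} r^p, \quad s_0 := t_0 + (2c_4\theta_- - \theta_{+,u})r^p, \quad s_1 := t_0 + 2c_4\theta_- r^p,
\end{equation*}
and seek $t_+^\filledstar\in(s_0,s_1)$ solving $\Psi(t_+^\filledstar)=s_0$. At the left endpoint $\Psi(s_0)<s_0$ is automatic from positivity of $v$ at $a_r(x_0)$. At the right endpoint, $v(A_+)\geq u(A_+)$ gives $\Psi(s_1)=s_1-\theta_{+,v}r^p\geq s_1-\theta_{+,u}r^p=s_0$. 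Continuity of $s\mapsto v(a_r(x_0),s)$ and the intermediate value theorem then produce $t_+^\filledstar$ in the stated half-open interval with $t_+^\filledstar-\theta_{+,v}^\filledstar r^p=s_0$; in particular $\theta_{+,v}^\filledstar\leq\theta_{+,u}$.

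Step 2 assembles four pointwise bounds. \Cref{th2} applied to $u$ gives $c_5^{-1}(d(x,\partial\Omega)/r)u(A_-)\leq u(x,t)\leq c_5(d(x,\partial\Omega)/r)u(A_+)$ on $I_u$. \Cref{th2-} applied to $v$ (valid since $\theta_-$ is shared) gives $v(x,t)\geq c_5^{-1}(d(x,\partial\Omega)/r)v(A_-)$ on $(t_0+c_4\theta_- r^p,\, s_1)\supset I_u$. For the upper bound on $v$ I re-apply the pivot step of \Cref{th2}, namely \Cref{AGSUpbd}, but at the shifted pivot $A_+^\filledstar$, giving $v(x,t)\leq c_5(d(x,\partial\Omega)/r)v(A_+^\filledstar)$ on $(t_+^\filledstar-2\theta_{+,v}^\filledstar r^p,\, t_+^\filledstar-\theta_{+,v}^\filledstar r^p)$, which by Step 1 is exactly the interval $(t_0+(2c_4\theta_--\theta_{+,u}-\theta_{+,v}^\filledstar)r^p,\, s_0)$ stated in the theorem. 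Since $\theta_{+,v}^\filledstar\leq\theta_{+,u}$, this interval is contained in $I_u$ and the vanishing band assumed on $S_T\cap (B_r(x_0)\times (t_0+(2c_4\theta_--5\theta_{+,u})r^p, s_0))$ is wide enough to contain the Carleson-type vanishing assumption of \Cref{AGSUpbd} required at pivot $t_+^\filledstar$.

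Step 3 is the ratio. Dividing the upper bound on $u$ by the lower bound on $v$ cancels $d(x,\partial\Omega)/r$ and yields $u(x,t)/v(x,t)\leq c_5^2\,u(A_+)/v(A_-)$; symmetrically, the lower bound on $u$ over the upper bound on $v$ gives $u(x,t)/v(x,t)\geq c_5^{-2}\,u(A_-)/v(A_+^\filledstar)$. The principal obstacle is Step 1: the correct sign at the right endpoint of $\Psi$ uses exactly the ordering $v(A_+)\geq u(A_+)$, and one must check that the several Carleson-type vanishing intervals needed by \Cref{AGSUpbd}, \Cref{th2-}, and \Cref{th2} at all the chosen pivots are simultaneously contained in the single vanishing band assumed in the hypothesis. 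Consolidating the constants from \Cref{th2}, \Cref{th2-}, \Cref{AGSUpbd}, and \Cref{carleson:thm1} into a single $c_5^2$ is otherwise routine bookkeeping.
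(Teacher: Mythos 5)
Your proposal follows essentially the same route as the paper's own proof: the intermediate value argument to locate $t_+^\filledstar$ (the paper checks the two endpoint inequalities $\Psi(s_0)<s_0$, $\Psi(s_1)\geq s_0$ explicitly and takes the largest root), the two-sided estimate from \Cref{th2} applied to $u$, the lower bound from \Cref{th2-} applied to $v$, the upper bound from \Cref{AGSUpbd} at the shifted pivot $A_+^\filledstar$, and the division of the aligned linear bounds. Your $\Psi$-formulation is a cleaner packaging of the same continuity argument, and the closing observation $\theta_{+,v}^\filledstar\leq\theta_{+,u}$ (which makes the final interval fit inside $I_u$) is likewise implicit in the paper.
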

\begin{figure}[h]
	\begin{center}
		\begin{tikzpicture}[scale=1.5]
			\draw [->] (0,1)--(0,5);
			\node[right] at (0.1,4.8) {$t$};
			\draw[pattern=north west lines] (0,3) rectangle (4,3.5);
			\draw[pattern=crosshatch] (0,3.5) rectangle (4,4);
			\node[fill=white,inner sep=1pt] at (2,3.75) {${(x,t)}$};
			\draw[dashed] (4,3)--(4,1);
			\draw[dashed] (4,4)--(4,5);
			\draw[fill=black] (4,5) circle [radius=0.05];
			\node[left] at (4,5) {$A_+$};
			\draw[fill=black] (4,4.5) circle [radius=0.05];
			\node[left] at (4,4.5) {$A_+^\filledstar$};
			\draw[fill=black] (4,1) circle [radius=0.05];
			\node[left] at (4,1) {$A_-$};
			\draw (4.1,5)--(4.2,5)--(4.2,1)--(4.1,1);
			\node[right] at (4.2,1.5) { $2 c_4 \theta_{-} r^p$};
			\draw (4.3,5)--(4.4,5)--(4.4,4)--(4.3,4);
			\node[right] at (4.4,4.5) {$\theta_{+,u} r^p$};
			\draw (4.3,4)--(4.4,4)--(4.4,3)--(4.3,3);
			\node[right] at (4.4,3.25) {$\theta_{+,u} r^p$};
			\draw (4.5,4)--(4.6,4)--(4.6,3.5)--(4.5,3.5);
			\node[right] at (4.6,3.75) {$\theta_{+,v}^\filledstar r^p$};
		\end{tikzpicture}
		\caption[Denotes the region where the right-hand-side/left-hand-side of \Cref{theo3+} holds respectively.]{The boxes (
			\tikz \filldraw[pattern=north west lines] (0ex,0ex) rectangle (2ex,2ex);
			\tikz \filldraw[pattern=crosshatch] (0ex,0ex) rectangle (2ex,2ex);
			) denotes the region where the right-hand-side/left-hand-side of \Cref{theo3+} holds respectively.
		} \label{figBHI}
	\end{center}
\end{figure}
\begin{remark}
	It should be noted that we cannot control the time $t_+^\filledstar$ except which interval it lies in, it is a purely intrinsic parameter. Furthermore note that \Cref{theo3+} is equivalent to the boundary Harnack principle \Cref{st1} when $p=2$.
\end{remark}
\begin{proof}
	Let $c_i$, $i \in \{3,\ldots,6\}$ be as in~\Cref{th2-,th2}. By the assumptions, we know that $\theta_{+,u} \geq \theta_{+,v} := c_6^{-1} v(A_+)^{2-p}$. We then obtain by~\Cref{th2}, for $x \in B_r(x_0) \cap \Omega$ and $t_0+ (2 c_4 \theta_- - 2\theta_{+,u}) r^p < t < t_0+(2 c_4 \theta_- - \theta_{+,u}) r^p$, that
	\begin{equation}
		\label{linlowup1u} \frac{1}{c_5} \frac{d(x,\partial \Omega)}{r}u(A_-) \leq u(x,t) \leq c_5 \frac{d(x,\partial \Omega)}{r} u(A_+)\,.
	\end{equation}
	Using \Cref{th2-} for $x \in B_r(x_0) \cap \Omega$ and $t_0+c_4 \theta_- r^p < t < t_0+2 c_4 \theta_-r^p$ we get that
	\begin{equation}
		\label{lowbd1} \frac{1}{c_3} \frac{d(x,\partial \Omega)}{r}v(A_-) \leq v(x,t)\,.
	\end{equation}
Now let $t_+ = t_0 + 2 c_4 \theta_- r^p$, and let $t_+^\filledstar$, a time to be fixed, such that $t_+-\theta_{+,u} r^p < t_+^\filledstar \leq t_+$. First note that if $t_+^\filledstar=t_+$ we have for $\theta_{+,v}^\filledstar = c_6^{-1} (v(a_{r}(x_0),t_+^\filledstar))^{2-p}$,
	\begin{equation*}
		t_+^\filledstar - \theta_{+,v}^\filledstar r^p \geq t_+ - \theta_{+,u} r^p \,,
	\end{equation*}
	furthermore if $t_+^\filledstar = t_+-\theta_+^u r^p$ then
	\begin{equation*}
		t_+^\filledstar - \theta_{+,v}^\filledstar r^p < t_+ - \theta_{+,u} r^p \,.
	\end{equation*}
	Thus by continuity there is a largest $t_+^\filledstar$ such that
	\begin{equation*}
		\label{tastdef} t_+^\filledstar - \theta_{+,v}^\filledstar r^p = t_+ - \theta_{+,u} r^p\,.
	\end{equation*}
	With $t_+^\filledstar$ at hand we now apply \Cref{AGSUpbd} (with the same $\delta,\tilde \delta$ as in the proof of \Cref{th2}) combining it with \Cref{lowbd1} to get
	\begin{equation}
		\label{linlowup1v} \frac{1}{c_5} \frac{d(x,\partial \Omega)}{r}v(A_-) \leq v(x,t) \leq c_5 \frac{d(x,\partial \Omega)}{r} v(A_+^\filledstar)
	\end{equation}
	for $x \in B_r(x_0) \cap \Omega$ and $t_0+(2 c_4 \theta_{-} - (\theta_{+,u}+\theta_{+,v}^\filledstar)) r^p < t < t_0+(2 c_4 \theta_- - \theta_{+,u}) r^p$. Combining \Cref{linlowup1u,linlowup1v} we have completed the proof.
\end{proof}


\subsection{Boundary measures in $C^{1,1}$-domains}

We conclude the section by describing the fine properties of the boundary measure defined in~\cref{1.1+}. The first theorem tells that the induced measure is mutually absolutely continuous with respect to the surface measure of $S_T$.

\begin{theorem}
	\label{thmabscont} Under the hypothesis of~\Cref{th2},
	\begin{equation*}
		0 < \liminf_{\varrho \to 0} \frac{\mu_u \big(Q_\varrho(x,t) \big)}{\varrho^{n+1}} \leq \limsup_{\varrho \to 0} \frac{\mu_u \big(Q_\varrho(x,t) \big)}{\varrho^{n+1}} < + \infty
	\end{equation*}
	where $Q_\varrho(x,t) := B_\varrho(x) \times (t-\varrho^2,t)$, whenever $(x,t) \in V$,
	\begin{equation*}
		V := (\partial\Omega \cap B_r(x_0)) \times ( t_0+(2 c_4 \theta_- - 2\theta_+) r^p, t_0+(2 c_4 \theta_- - \theta_+) r^p)\,.
	\end{equation*}
	In particular, $\mu_u$ is mutually absolutely continuous with respect to the surface measure of $S_T$ on $V$.
\end{theorem}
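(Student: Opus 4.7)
The plan is to derive two-sided bounds of the form $c^{-1}\varrho^{n+1} \leq \mu_u(Q_\varrho(x,t)) \leq c\,\varrho^{n+1}$ for all sufficiently small $\varrho > 0$ and all $(x,t) \in V$, and then conclude mutual absolute continuity with respect to $\de H^{n-1}\otimes \dt$ by a standard parabolic differentiation argument. The key input is that, under the hypothesis of \Cref{th2}, the solution has an essentially linear decay at the boundary inside $V$, so any intrinsic scaling collapses to the standard parabolic scaling $t \sim \varrho^2$.

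First, I would fix $(x,t) \in V$ and a small $\varrho > 0$, and observe that by \Cref{th2}, for any $y \in \Omega$ with $d(y,\partial\Omega) \approx \varrho$ and for $s$ in a $\varrho^2$-neighbourhood of $t$, one has
\[
	\frac{1}{c_5}\,\frac{d(y,\partial\Omega)}{r}\,u(A_-) \;\leq\; u(y,s) \;\leq\; c_5\,\frac{d(y,\partial\Omega)}{r}\,u(A_+)\,,
\]
so in particular $u(a_\varrho(x),s)$ is comparable to $\varrho$ (up to a factor depending on $u(A_\pm)/r$, which is fixed). Consequently, the intrinsic time $u(a_\varrho(x),s)^{2-p}\varrho^p$ is comparable to $\varrho^2$, which is exactly the vertical scale of $Q_\varrho(x,t)$. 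This is the crucial scale match that makes \Cref{MuUpperBound,MuLowerBound} directly yield the desired bound.

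For the upper bound, I would apply \Cref{MuUpperBound} with center $x$, radius $2\varrho$, reference point $a_{2\varrho}(x)$, and parameters $\delta,\tilde\delta$ chosen universally so that the cylinder $\hat Q$ in \Cref{MuUpperBound} contains $Q_\varrho(x,t)$; this is possible since $\tau \sim \varrho^2$ and the continuous vanishing of $u$ on $S_T$ near $(x,t)$ is provided by the hypothesis of \Cref{th2}. This yields $\mu_u(Q_\varrho(x,t)) \leq C\,(2\varrho)^n\, u(a_{2\varrho}(x),s) \leq C'\,\varrho^{n+1}$. For the lower bound, I would apply \Cref{MuLowerBound} at radius $\varrho/4$ based at a time $t_0'$ slightly preceding $t$, with reference point $a_{\varrho/8}(x)$, choosing $t_0'$ so that the cylinder $Q$ appearing in the conclusion of \Cref{MuLowerBound} is contained in $Q_\varrho(x,t)$; again $\tau_0,\tau_1 \sim 1$ combined with $u(A_{\varrho/4}^-)^{2-p}(\varrho/4)^p \sim \varrho^2$ makes the fit possible. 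The lower bound from \Cref{MuLowerBound} together with the lower bound on $u(A_{\varrho/4}^-)$ coming from \Cref{th2} then gives $\mu_u(Q_\varrho(x,t)) \geq c\,\varrho^n\cdot\varrho = c\,\varrho^{n+1}$.

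Finally, since $H^{n-1}\otimes\mathcal{L}^1$ of $Q_\varrho(x,t)\cap S_T$ is comparable to $\varrho^{n-1}\cdot \varrho^2 = \varrho^{n+1}$ (as $\partial\Omega$ is $C^{1,1}$), the density bounds above and a standard Besicovitch/Vitali differentiation argument for parabolic cylinders on $S_T$ imply that $\mu_u$ on $V$ is mutually absolutely continuous with respect to the surface measure, with Radon--Nikodym derivative bounded above and below by positive constants. The main technical point to check carefully is the alignment of the various time parameters $\delta,\tilde\delta,\tau_0,\tau_1$ in \Cref{MuUpperBound,MuLowerBound} so that the resulting cylinders respectively contain and are contained in $Q_\varrho(x,t)$; all the associated vanishing hypotheses on $S_T$ are furnished for small $\varrho$ by the vanishing assumption built into the hypothesis of \Cref{th2}.
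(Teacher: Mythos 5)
Your proposal follows essentially the same route as the paper's proof: use the two-sided linear bounds from \Cref{th2} (the paper records them explicitly as $\lambda_- d(x,\partial\Omega) \le u(x,t) \le \lambda_+ d(x,\partial\Omega)$ with $\lambda_\pm := c_5^{\pm 1} u(A_r^\pm)/r$) to conclude that the intrinsic waiting time $u^{2-p}\varrho^p$ collapses to $\sim \lambda_\pm^{2-p}\varrho^2$, and then apply \Cref{MuUpperBound} and \Cref{MuLowerBound} at comparable scales to sandwich $\mu_u(Q_\varrho(x,t))$ between constant multiples of $\varrho^{n+1}$. The paper packages the scale-matching in the auxiliary cylinder $U_\varrho(y,s)=B_\varrho(y)\times(s-\tilde\tau\varrho^2,\,s+\tilde\tau\varrho^2)$ with $\tilde\tau$ chosen in terms of $\lambda_-^{2-p}$ and the constants from the measure lemmas, and then adjusts the constant/radius to pass to $Q_\varrho$, but this is the same scale-alignment you describe; and both proofs end by noting that the two-sided density bounds on parabolic surface cylinders give mutual absolute continuity.
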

\begin{proof}
	By~\Cref{th2} we have that
	\begin{equation}
		\label{lambda_pm} \lambda_- d(x,\partial \Omega) \leq u(x,t) \leq \lambda_+ d(x,\partial \Omega) \,, \qquad \lambda_\pm := c_5^{\pm 1} \frac{u(A_r^\pm) }{r}
	\end{equation}
	whenever $(x,t) \in Q$ with
	\begin{equation*}
		Q: = (\Omega \cap B_r(x_0)) \times \left( t_0+( 2 c_4 \theta_- - 2\theta_+) r^p, t_0+(2 c_4\theta_- - \theta_+) r^p \right)
	\end{equation*}
	and $\theta_\pm$ are as in~\Cref{th2}. We now pick a point $(y,s) \in S_T \cap \partial_p Q$. Choose $\varrho$ small enough so that $U_{\varrho}(y,s) \cap (\Omega \times \R)$ is contained in $Q$, where
	\begin{equation*}
		U_\varrho(y,s) := B_{\varrho} (y) \times (s- \tilde \tau \varrho^2, s +\tilde \tau \varrho^2)\,,
	\end{equation*}
	$\tilde \tau := \lambda_-^{2-p} \max\{8 C_4 C_5^{2-p}, 2(\tau_0 + \tau_1) \}$ and $C_4,C_5$ and $\tau_0,\tau_1$ are as in~\Cref{MuUpperBound,MuLowerBound}. After a simple covering argument using~\cref{lambda_pm}, and~\Cref{MuUpperBound,MuLowerBound},  we find a constant $C \equiv C(p,n,M,\lambda_\pm)$ such that
	\begin{equation*}
		\frac1C \leq \frac{\mu_u(U_{\varrho/2}(y,s))}{\varrho^{n+1}} \leq C\,.
	\end{equation*}
	Taking a possibly larger $C$, and a smaller $\varrho$, this actually implies that
	\begin{equation*}
		\frac1C \leq \frac{\mu_u(Q_{\varrho}(y,s))}{\varrho^{n+1}} \leq C\,,
	\end{equation*}
	uniformly for small enough $\varrho$. This proves the statement.
\end{proof}
\begin{remark}
	Note that in the same region $V$ as in \Cref{thmabscont} we have that the measure is doubling. Moreover note that \Cref{th2-} implies a Hopf-type result on this boundary cylinder $V$, thus together with the fact that solutions are $C^{1,\alpha}$ up to the boundary we get that the logarithm of the normal derivative on the boundary is H\"older continuous. Now arguing as in \cite[(1.7)--(1.10)]{ALNopt} we get, for $(x_0,t_0) \in V $ given, and $\epsilon \in (0,1)$, that
	\begin{equation*}
		\lim_{\varrho \to 0} \frac{\mu_u(Q_{\epsilon \varrho}(x_0,t_0))}{\mu_u(Q_{\varrho}(x_0,t_0))} = \epsilon^{n+1}\,.
	\end{equation*}
	In particular, the measure $\mu_u$ is asymptotically optimal doubling.
\end{remark}


\end{document}